\newtheorem{thm}{Theorem}[section]
\newtheorem{lm}[thm]{Lemma}
\newtheorem{defn}[thm]{Definition}
\newtheorem{prop}[thm]{Proposition}
\newtheorem{conj}[thm]{Conjecture}
\newtheorem{rmk}[thm]{Remark}
\numberwithin{equation}{section}
\newcommand{\balpha}{\boldsymbol{\alpha}}
\newcommand{\bB}{\mathbf{B}}
\newcommand{\bbeta}{\boldsymbol{\beta}}
\newcommand{\bseta}{\boldsymbol{\eta}}
\newcommand{\bh}{\boldsymbol{h}}
\newcommand{\bn}{\boldsymbol{n}}
\newcommand{\bt}{\boldsymbol{t}}
\newcommand{\btau}{\boldsymbol{\tau}}
\newcommand{\bunion}{\sqcup}
\newcommand{\bx}{\boldsymbol{x}}
\newcommand{\bxi}{\boldsymbol{\xi}}
\newcommand{\bz}{\boldsymbol{z}}
\newcommand{\cD}{\mathcal{K}}
\newcommand{\cH}{\mathcal{H}^{\mathrm{UT}}}
\newcommand{\complexC}{\mathbb{C}}
\newcommand{\dD}{D}
\newcommand{\dist}{\mathrm{dist}}
\newcommand{\dP}{Q_1}
\newcommand{\dQ}{Q_2}
\newcommand{\FGUE}{\mathrm{F}_{\mathrm{GUE}}}
\newcommand{\ddbar}[2]{\frac{{\mathrm d}#1}{2\pi {\mathrm i}#2}}
\newcommand{\FT}{\mathrm{T}}
\newcommand{\ii}{\mathrm{i}}
\newcommand{\inn}{\mathrm{in}}
\newcommand{\LL}{\mathrm{L}}
\newcommand{\out}{\mathrm{out}}
\newcommand{\pD}{\mathcal{\hat K}}
\newcommand{\prob}{\mathbb{P}}
\newcommand{\rC}{\mathrm{C}}
\newcommand{\rd}{\mathrm{d}}
\newcommand{\realR}{\mathbb{R}}
\newcommand{\rh}{\mathrm{h}}
\newcommand{\rH}{\mathrm{H^{KPZ}}}
\newcommand{\HH}{\mathbb{H}_L}
\newcommand{\RR}{\mathrm{R}}
\newcommand{\rt}{\mathrm{t}}
\newcommand{\rx}{\mathrm{x}}
\renewcommand{\Im}{\mathrm{Im}}
\renewcommand{\Re}{\mathrm{Re}}
\newcommand{\Bts}{\mathbf{B}_{\mathrm{ts}}}
\title{An upper tail field of the KPZ fixed point}
\author{Zhipeng Liu\footnote{Department of Mathematics, University of Kansas, Lawrence, KS 66045. Email: \texttt{zhipeng@ku.edu}} \and Ruixuan Zhang\footnote{Department of Mathematics, University of Utah, Salt Lake City, UT 84112. Email: \texttt{ray.zhang@math.utah.edu}}}
\date{}
\begin{document}

\maketitle

\begin{abstract}
    The KPZ fixed point is a (1+1)-dimensional space-time random field  conjectured to be the universal limit for models within the Kardar-Parisi-Zhang (KPZ) universality class. We consider the KPZ fixed point with the narrow-wedge initial condition, conditioning on a large value at a specific point. By zooming in the neighborhood of this high point appropriately, we obtain a limiting random field, which we call an upper tail field of the KPZ fixed point. Different from the KPZ fixed point, where the time parameter has to be nonnegative, the upper tail field is defined in the full $2$-dimensional space. Especially, if we zoom out the upper tail field appropriately, it behaves like a Brownian-type field in the negative time regime, and the KPZ fixed point in the positive time regime. One main ingredient of the proof is an upper tail estimate of the joint tail probability functions of the KPZ fixed point near the given point, which generalizes the well known one-point upper tail estimate of the GUE Tracy-Widom distribution.
\end{abstract}
\section{Introduction}
\subsection{Background and motivation}
The KPZ fixed point is a (1+1)-dimensional space-time random field which has been proven or conjectured to be the universal limiting space-time field of a large class of interface growth models in the Kardar-Parisi-Zhang universality class \cite{BDJ, Jo00, TW08, TW09, ACQ,Borodin-Corwin13, borodin2016stochastic,MQR,QS22,wu2023kpz,DZ24,ACH24}. It was first rigorously constructed in \cite{MQR}, and could be viewed as a marginal of the directed landscape \cite{DOV} which is a universal random metric for the Kardar-Parisi-Zhang universality class. There have been many studies on the properties of the KPZ fixed point \cite{Corwin-Quastel-Remenik11,10.1214/22-EJP898,DAUVERGNE2024109550,quastel2022kp,baik2023differential,Liu-Wang22,ferrari2024exactdecaypersistenceprobability,DDV24,DT24}. In this paper, we mainly focus on the limiting behaviors of the KPZ fixed point when the height function at a point becomes extremely large. More explicitly, if we denote $\rH(x,t)$ the KPZ fixed point with the narrow-wedge initial condition, what does $\rH(x,t)$ look like conditioning on $\rH(0,1)\to\infty$?

This question was partly answered in \cite{Liu-Wang22,NZ22}. It turns out that, before the high point $\rH(0,1)=L$, there is a strip of size $O(L^{-1/4})$ along the line between $(0,0)$ and $(0,1)$. Within this strip, the KPZ fixed point fluctuates of $O(L^{1/4})$ and the limiting fluctuation is given by the minimum of two independent Brownian bridges. More explicitly, it is proved in \cite{Liu-Wang22} that 
\begin{equation}
\label{eq:LiuWang}
\left\{ \frac{\rH\left(\frac{x}{\sqrt{2}L^{1/4}},t\right)-t\rH(0,1)}{\sqrt{2}L^{1/4}} \, \Bigg|\, \rH(0,1) = L\right\} \to \min\left\{\mathbb{B}_1(t)+x,\mathbb{B}_2(t)-x\right\}
\end{equation}
in the sense of convergence of finite dimensional distributions when $L$ goes to infinity. Here $x\in\realR, t\in (0,1)$, and  $\mathbb{B}_1$ and $\mathbb{B}_2$ are two independent Brownian bridges. The conditional distribution should be understood as a limit of the distribution conditioned on $\{\rH(0,1) \in (L-\epsilon,L+\epsilon)\}$ as $\epsilon\to 0$. On the other hand, after the high point, the KPZ fixed point returns to an unconditioned KPZ fixed point with the narrow-wedge initial condition. This was proved for the convergence of the one-point distribution in \cite{NZ22} as below
\begin{equation}
\label{eq:NZ}
\prob \left(\rH(x,1+t)-\rH(0,1) \le h \mid \rH(0,1)=L\right)\to \FGUE\left(\frac{h}{t^{1/3}}+\frac{x^2}{t^{4/3}}\right)= \prob \left(  \mathrm{\tilde H}^{\mathrm{KPZ}}(x,t) \le h\right)
\end{equation}
as $L$ goes to infinity. Here $h, x\in\realR, t\in (0,\infty)$, $\mathrm{\tilde H}^{\mathrm{KPZ}}$ denotes a new KPZ fixed point with the narrow wedge initial condition which is independent of $\rH$, and $\FGUE$ is the GUE Tracy-Widom distribution.

The two results above imply that $\rH(x,t)$ has two different limiting behaviors before and after the conditioned high point. Both the scaling exponents and the limiting fields are totally different in these two time regimes. Therefore, it is very interesting to understand how the transition occurs. The main goal of this paper is to investigate the limiting behaviors of $\rH(x,t)$ near the high point $(0,1)$, as $\rH(0,1)$ goes to infinity. Our main result is a limit theorem for the conditional field $\{\rH(x,t)\mid \rH(0,1)\ge L\}$ near the point $(0,1)$ as $L\to\infty$. We are also able to show that the limiting field, which we call the upper tail field of the KPZ fixed point, interpolates the Brownian-like field \eqref{eq:LiuWang} and the KPZ fixed point, as we expect in the discussions above. It is a new field to the best of our knowledge, and is different from some other known transitional field from Gaussian Universality to KPZ Universality, such as those reviewed in \cite{IC16}.

There are also recent large deviation results in the upper tail regime which are relevant to this paper. For example, \cite{GH22} considered the one-point limit shape of the (solution to) KPZ equation at the time $t=1$ when the solution becomes large at the point $(0,1)$. \cite{Gaudreau_Lamarre_2023} also studied the KPZ equation and obtained the large deviation rate function of the solution in the weak noise regime under the deep upper tail condition. \cite{lin2023spacetime} proved the $n$-point fixed-time large deviation principle and characterized the space-time limit shape of the KPZ equation in the upper tail. \cite{ganguly2023brownian} proved the Brownian bridge limit of the geodesic in the directed landscape and continuum directed random
polymer in the upper tail region. Most recently, \cite{DDV24} and \cite{DT24} studied the upper tail large deviations of the directed landscape and the associated marginals. There are also related results for the periodic KPZ fixed point. \cite{baik2024pinchedup} obtained the limiting conditional field before a conditioned large height location for the periodic KPZ fixed point. Note that when the period goes to infinity, the periodic KPZ fixed point converges to the KPZ fixed point \cite{Baik-Liu-Silva22}. Hence, their result can be viewed as a generalization of \eqref{eq:LiuWang} when the period does not necessarily go to infinity.

On the other hand, it is a natural question to ask the limiting behaviors of $\{\rH(x,t) \mid \rH(0,1) \le  -L\}$ as  $L \to \infty$, which corresponds to the lower tail regime. The KPZ fixed point in the upper tail and lower tail regimes behaves very different. For the KPZ equation with a narrow wedge initial condition, predictions for the one-point limit and large deviations have been proposed in the physics literature \cite{kamenev2016short,meerson2016large,kamenev2016short}. Recently, \cite{lin2022lower} established the most probable limit shape for the KPZ equation before a conditioned extremely negative value at the time $t = 2$. However, the precise characterization of the lower-tail limit for the KPZ equation or the KPZ fixed point remains an open problem.

\subsection{Main results}
The main result of this paper is about the limit of the rescaled KPZ fixed point near a conditioned high point. The limit $\cH(\alpha,\tau)$, which we call the upper tail local limiting field or the upper tail field for short if there is no confusion, will be defined in Section \ref{sec:def_cH} via its finite-dimensional distributions, and 
\begin{equation}
    \label{eq:def_cH0}
    \cH_0(\alpha,\tau):= \cH(\alpha,\tau)-\cH(0,0),\quad (\alpha,\tau)\in\realR^2.
    \end{equation}
We also recall that $\rH(x,t)$ denotes the KPZ fixed point with the narrow-wedge initial condition.   The main theorem is as follows.

\begin{thm}
\label{thm:main}
    Assume that $\hat\alpha, \hat\tau$ and $\hat\beta$ are constants. 
   \begin{enumerate}[(a)]
   \item
    Conditioned on $\rH(\hat\alpha  L^{-1},1 + \hat\tau L^{-3/2}) \ge L + \hat\beta L^{-1/2},$ 
    \begin{equation}
    \label{eq:main_result_general}
        \sqrt{L}\left(\rH(\alpha L^{-1},1+\tau L^{-3/2})-L\right) \rightarrow \hat\beta+ \cH(\alpha- \hat\alpha,\tau- \hat\tau),\qquad (\alpha,\tau)\in \realR^2
    \end{equation}
    in the sense of convergence of finite-dimensional distributions as $L\to \infty$.  Especially, conditioned on $\rH(0,1) \ge L$,
    \begin{equation}
    \label{eq:main_result}
        \sqrt{L}\left(\rH(\alpha L^{-1},1+\tau L^{-3/2})-L\right) \rightarrow \cH(\alpha,\tau),\qquad (\alpha,\tau)\in \realR^2
    \end{equation}
    in the sense of convergence of finite-dimensional distributions as $L\to \infty$.
    \item 
    Conditioned on $\rH(\hat\alpha  L^{-1},1 + \hat\tau L^{-3/2}) = L + \hat\beta L^{-1/2}$, 
    \begin{equation}
    \label{eq:main_result_general2}
        \sqrt{L}\left(\rH(\alpha L^{-1},1+\tau L^{-3/2})-L \right) \rightarrow \hat\beta+ \cH_0(\alpha- \hat\alpha,\tau- \hat\tau),\qquad (\alpha,\tau)\in \realR^2
    \end{equation}
    in the sense of convergence of finite-dimensional distributions as $L\to \infty.$ Here the conditional probability $\prob\left(\cdot \mid \rH(x,t)=h\right)$ should be understood as $\lim_{\epsilon\to 0+} \prob\left(\cdot \mid \rH(x,t)\in (h,h+\epsilon)\right)$.      
    Especially, conditioned on $\rH(0,1) = L$,
    \begin{equation}
    \label{eq:main_result2}
        \sqrt{L}\left(\rH(\alpha L^{-1},1+\tau L^{-3/2})-L\right) \rightarrow \cH_0(\alpha,\tau),\qquad (\alpha,\tau)\in \realR^2
    \end{equation}
    in the sense of convergence of finite-dimensional distributions as $L\to \infty$.
    \end{enumerate}
\end{thm}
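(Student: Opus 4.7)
The plan is to prove convergence of finite-dimensional distributions by analyzing the joint upper tail
\begin{equation*}
\prob\Bigl(\bigcap_{i=0}^{m}\bigl\{\rH(\alpha_i L^{-1},1+\tau_i L^{-3/2})\ge L+\beta_i L^{-1/2}\bigr\}\Bigr)
\end{equation*}
as $L\to\infty$, and matching the leading asymptotic with the explicit definition of $\cH$ given in Section~\ref{sec:def_cH}. Both parts of the theorem reduce to such joint tails: part (a) by dividing the joint tail by a one-point tail, and part (b) by considering the narrow strip $\{L+\hat\beta L^{-1/2}\le\rH(\hat\alpha L^{-1}, 1+\hat\tau L^{-3/2})<L+(\hat\beta+\epsilon)L^{-1/2}\}$ and passing to $\epsilon\to 0$.

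The main technical input is a multi-point generalization of the Tracy-Widom upper tail $1-\FGUE(y)\sim (16\pi y^{3/2})^{-1}e^{-\tfrac{4}{3}y^{3/2}}$. Under the chosen scaling, one expects an asymptotic of the form $e^{-\tfrac{4}{3}L^{3/2}}$ times an explicit $L$-polynomial prefactor times a nontrivial limit function $G$ of the scaled test-point coordinates $(\alpha_i,\tau_i,\beta_i)$. To produce this expansion I would start from an explicit multi-point formula for $\rH$ with the narrow-wedge initial condition---either the Fredholm determinant description of \cite{MQR} or the contour-integral formula of \cite{Liu-Wang22}---substitute the rescaled coordinates, and perform a steepest-descent analysis. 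The leading factor $e^{-\tfrac{4}{3}L^{3/2}}$ arises from deforming contours past the unique upper-tail saddle point at scale $\sqrt{L}$, while $G$ emerges from a rescaled kernel near that saddle whose expansion captures the parameters $(\alpha_i,\tau_i,\beta_i)$ at the same order. This steepest-descent step, with careful contour deformations and uniform error control over all test points, is the main obstacle of the proof.

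Given the joint tail estimate, part (a) follows by dividing the joint tail by the one-point tail at the conditioning coordinate: the common exponential and polynomial $L$-factors cancel and the resulting ratio of $G$-functions is, by construction, the joint survival function of $\hat\beta+\cH(\alpha_i-\hat\alpha,\tau_i-\hat\tau)$. For part (b), I would apply the estimate to the numerator and denominator of the strip-conditional probability, expressing it as a ratio of differences of joint survival functions, and then let the strip width $\epsilon\to 0$. Differentiating $G$ with respect to $\beta_0$ produces the conditional joint law, which matches $\hat\beta+\cH_0(\alpha_i-\hat\alpha,\tau_i-\hat\tau)$; the subtraction defining $\cH_0$ in \eqref{eq:def_cH0} reflects the pinning of the height at the conditioning point. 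Equivalently, part (a) can be recovered from part (b) by integrating against the limiting conditional density of the height at $(\hat\alpha L^{-1},1+\hat\tau L^{-3/2})$, which the one-point Tracy-Widom asymptotic identifies as $2e^{-2u}\,du$ on $[0,\infty)$, consistent with $\cH(0,0)$ being exponentially distributed with rate $2$.
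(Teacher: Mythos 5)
Your plan for part (a) is essentially the paper's proof: the paper proves exactly the multipoint tail asymptotics you postulate (Proposition \ref{prop:main}, via the joint tail formula of Proposition \ref{prop:KPZ_tail_prob} and steepest descent), and then part (a) is a ratio of two such tails together with the shift property of $\FT$ (Proposition \ref{prop:shift_parameters}) and Definition \ref{def:cH_by_tail}. So far, no objection beyond the fact that the steepest-descent step you flag as the main obstacle is indeed where almost all the work lies (in particular the equal-time cases force the bent contours at angle $\pm\pi/5$).

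For part (b), however, your route has a genuine gap: an unjustified exchange of limits. The conditional law given $\rH(\hat x,\hat t)=\hat h$ is, for each fixed $L$, already an $\epsilon\to 0$ limit (equivalently a ratio of $\partial_{\hat h}$-derivatives of joint tails), and the theorem asks for the $L\to\infty$ limit of that quantity. Your proposal applies the tail estimate to the strip probabilities at fixed $\epsilon$, takes $L\to\infty$, and only then sends $\epsilon\to 0$; this computes $\lim_{\epsilon\to 0}\lim_{L\to\infty}$ rather than the required $\lim_{L\to\infty}\lim_{\epsilon\to 0}$, and pointwise convergence of the tails to $G$ does not by itself let you swap the two limits (the paper's remark after Theorem \ref{thm:main} explicitly flags this interchange as a heuristic it cannot justify directly). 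What is needed, and what the paper supplies, is a derivative version of the tail asymptotics: the second statement \eqref{eq:prop_main_derivative} of Proposition \ref{prop:main}, proved by differentiating the contour formula (Proposition \ref{prop:derivative_multipoint_tail}, justified by Lemma \ref{lm:exchange_differentiation_sum}) and redoing the steepest descent with uniform bounds for the differentiated integrand. With that in hand one writes, at fixed $L$, the exact-value conditional probability as $\bigl(\partial_{\beta_k}\prob(\cap_\ell\{\HH\ge\beta_\ell\})\bigr)/\bigl(\partial_{\beta_k}\prob(\HH(\alpha_k,\tau_k)\ge\beta_k)\bigr)$ and passes to the limit term by term, then uses the shift property and the independence of $\cH_0$ from $\cH(0,0)$ to identify the limit with the law of $\hat\beta+\cH_0$. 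Your closing remark that (a) could be recovered from (b) by integrating against the limiting density has the same interchange problem. So to complete part (b) you must add the convergence of the $\beta_k$-derivatives of the multipoint tails (or some equivalent uniform control in $\beta_k$), not merely the tail asymptotics themselves.
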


\begin{rmk}
Note the smaller scaling exponents in the equations \eqref{eq:main_result_general} and \eqref{eq:main_result_general2} than those in the formulas \eqref{eq:LiuWang} and \eqref{eq:NZ}. Heuristically, we need to zoom in the small neighborhood of $(0,1)$ to see how the two fields in \eqref{eq:LiuWang} and \eqref{eq:NZ} transit to each other near the point $(0,1)$.
\end{rmk}
\begin{rmk}
The two limiting fields $\cH$ and $\cH_0$ are slightly different when conditioning on $\rH(\hat\alpha  L^{-1},1 + \hat\tau L^{-3/2}) \ge L + \hat\beta L^{-1/2} $ and $\rH(\hat\alpha  L^{-1},1 + \hat\tau L^{-3/2}) = L + \hat\beta L^{-1/2} $ respectively. Intuitively, it means that the scaling is so small that the conditional field $\sqrt{L}\left(\rH(\alpha L^{-1},1+\tau L^{-3/2})-L\right)$ is sensitive to a small change of $\sqrt{L}\left(\rH(\hat\alpha L^{-1},1+\hat \tau L^{-3/2})-L\right)$.

On the other hand, one can heuristically derive that the two statements in the theorem are equivalent. For simplicity, we assume that $\hat\alpha=\hat\tau=0$. We show how one expects the second statement from the first and the other direction is similar. Assuming the first statement, for points $(\alpha_i,\tau_i)\ne (0,0)$, $1\le i\le m$,
\begin{equation}
\begin{split}
&\lim_{L\to\infty}\lim_{\epsilon\to 0} \prob \left(\bigcap_{i=1}^m\left\{\rH(\alpha_i L^{-1},1+\tau_i L^{-3/2})-L -L^{-1/2}\hat\beta \ge L^{-1/2}\beta_i\right\}\,\Big|\, L^{1/2}(\rH(0,1)-L)\in (\hat\beta,\hat\beta+\epsilon) \right)\\
&=\lim_{\epsilon\to 0}\lim_{L\to\infty} \frac{\prob\left(\bigcap_{i=1}^m\left\{\rH(\alpha_i L^{-1},1+\tau_i L^{-3/2})-L \ge L^{-1/2}(\beta_i+\hat\beta)\right\} \bigcap \left\{L^{1/2}(\rH(0,1)-L)\in (\hat\beta,\hat\beta+\epsilon)\right\}\right)}{\prob\left(L^{1/2}(\rH(0,1)-L)\in (\hat\beta,\hat\beta+\epsilon)\right)}\\
&= \lim_{\epsilon\to 0} \prob\left(\bigcap_{i=1}^m\left\{\cH(\alpha_i,\tau_i)\ge \beta_i\right\}\, \big| \, \cH(0,0)\in (0,0+\epsilon)\right)\\
&=\lim_{\epsilon\to 0} \frac{\prob\left(\bigcap_{i=1}^m\left\{\cH_0(\alpha_i,\tau_i)+\cH(0,0)\ge \beta_i\right\}\bigcap \left\{\cH(0,0)\in (0,0+\epsilon)\right\}\right)}{\prob\left(\cH(0,0)\in (0,0+\epsilon)\right)}=\prob\left(\bigcap_{i=1}^m\left\{\cH_0(\alpha_i,\tau_i)\ge \beta_i\right\}\right),
\end{split}
\end{equation}
where the last equation comes from a few properties of $\cH$ and $\cH_0$ which will be proved later: $\cH(0,0)$ is independent of $\cH_0(\alpha_i,\tau_i)=\cH(\alpha_i,\tau_i)-\cH(0,0)$, and all the random variables $\cH(\alpha_i,\tau_i)$ and $\cH(0,0)$ have a sufficiently good joint tail probability function which is differentiable. This reasoning is still heuristic due to the change of the order of limits in the first step, which we don't have a short argument to justify. Instead, we prove the two parts of the theorem separately using the steepest descent method.
\end{rmk}

\begin{rmk}
\label{rmk:generalization_thm}
Assume $(x,t)$ is a given point in $\realR\times\realR_+$. Note that the KPZ fixed point $\rH$ has the following invariance property (see \cite[Lemma 10.2]{DOV} for example)
\begin{equation}
\begin{split}
&\rH(x+\alpha L^{-1}, t+\tau L^{-3/2}) + \frac{1}{t+\tau L^{-3/2}} (x+\alpha L^{-1})^2\\
&\stackrel{d}{=} \, t^{1/3}\rH (\alpha t^{-2/3}L^{-1}, 1+ \tau t^{-1}L^{-3/2}) + \frac{1}{t+\tau L^{-3/2}} (\alpha L^{-1})^2,
\end{split}
\end{equation}
where $\stackrel{d}{=}$ denotes the equation in distribution. Thus, the convergence \eqref{eq:main_result_general} can be generalized as follows. Conditioned on $\rH(x+ \hat \alpha L^{-1}, t+ \hat\tau L^{-3/2}) + t^{-1}x^2 \ge tL + \hat\beta L^{-1/2}$, 
\begin{equation}
\label{eq:general_law}
\sqrt{L} \left( \rH(x+\alpha L^{-1}, t+ \tau L^{-3/2}) +t^{-1}x^2 -tL \right) \rightarrow \hat\beta + \cH(\alpha- \hat \alpha, \tau-\hat \tau), \quad (\alpha,\tau)\in \realR^2
\end{equation}
in the sense of convergence of finite-dimensional distributions as $L\to \infty$. And similarly, conditioned on $\rH(x+ \hat \alpha L^{-1}, t+ \hat\tau L^{-3/2}) + t^{-1}x^2 = tL + \hat\beta L^{-1/2}$, 
\begin{equation}
\label{eq:general_law2}
\sqrt{L} \left( \rH(x+\alpha L^{-1}, t+ \tau L^{-3/2}) +t^{-1}x^2 -tL \right) \rightarrow  \hat\beta+ \cH_0(\alpha- \hat \alpha, \tau-\hat \tau), \quad (\alpha,\tau)\in \realR^2,
\end{equation}
in the sense of convergence of finite-dimensional distributions as $L\to \infty$.
\end{rmk}

The proof of Theorem \ref{thm:main} is provided in Section \ref{sec:proof_mainthm}. 

\bigskip

The upper tail field $\cH(\alpha,\tau)$ obtained in Theorem \ref{thm:main} has the following properties. 
\begin{prop}
\label{prop:properties_cH}
The upper tail field $\cH(\alpha,\tau)$ satisfies:
\begin{enumerate}[(a)]
\item $\cH(0,0)$ is an exponential random variable of parameter $2$.
\item For all $x,\tau, \beta \in \realR$, we have
     \begin{equation}
     \prob\left(\cH(\alpha,\tau)\ge \beta\right) = e^{\frac{2}{3}\tau -2\beta} \prob\left(\cH(-\alpha,-\tau)\ge -\beta\right).
     \end{equation}
\item The field $\cH_0(\alpha,\tau) =\cH(\alpha,\tau)-\cH(0,0)$ is independent of $\cH(0,0)$.
\item At time $\tau=0$, the spatial process $\cH_0(\alpha,\tau=0)$ has the same distributions as $\Bts(2\alpha)-2|\alpha|$, where $\Bts$ denotes a two-sided Brownian motion with $\Bts(0)=0$.
\end{enumerate}
\end{prop}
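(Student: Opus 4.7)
The plan is to derive each property from Theorem \ref{thm:main} combined with known asymptotics of the narrow-wedge KPZ fixed point; parts (a)--(c) follow fairly directly, while part (d) is the main technical challenge.

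For part (a), setting $(\alpha,\tau)=(0,0)$ in Theorem \ref{thm:main}(a) yields, for $\beta\ge 0$,
\[
\prob(\cH(0,0)\ge\beta) \;=\; \lim_{L\to\infty} \frac{\prob(\rH(0,1)\ge L+\beta L^{-1/2})}{\prob(\rH(0,1)\ge L)}.
\]
Since $\rH(0,1)\stackrel{d}{=}\FGUE$ with classical upper tail $\prob(\FGUE\ge s)=\frac{e^{-\frac{4}{3}s^{3/2}}}{16\pi s^{3/2}}(1+o(1))$, and $(L+\beta L^{-1/2})^{3/2}=L^{3/2}+\frac{3}{2}\beta+O(L^{-3/2})$, the exponential factors give $e^{-2\beta}$ while the polynomial prefactors cancel. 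For part (b), apply Theorem \ref{thm:main}(a) twice---once with $\hat\alpha=\hat\tau=\hat\beta=0$ evaluated at $(\alpha,\tau)$, once with $\hat\alpha=\alpha,\hat\tau=\tau,\hat\beta=\beta$ evaluated at $(0,0)$---so that their ratio collapses to
\[
\frac{\prob(\cH(\alpha,\tau)\ge\beta)}{\prob(\cH(-\alpha,-\tau)\ge-\beta)} \;=\; \lim_{L\to\infty}\frac{\prob(\rH(\alpha L^{-1},1+\tau L^{-3/2})\ge L+\beta L^{-1/2})}{\prob(\rH(0,1)\ge L)}.
\]
The narrow-wedge one-point identity $\rH(x,t)+x^2/t\stackrel{d}{=}t^{1/3}\FGUE$ combined with the same GUE tail and the expansion $(1+\tau L^{-3/2})^{-1/3}=1-\tfrac{\tau}{3}L^{-3/2}+O(L^{-3})$ then yield $e^{2\tau/3-2\beta}$.

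Part (c) is a direct consequence of Theorem \ref{thm:main}(b) with $\hat\alpha=\hat\tau=0$: conditional on $\cH(0,0)=\hat\beta$, the field $\cH(\alpha,\tau)$ has the distribution of $\hat\beta+\cH_0(\alpha,\tau)$, whose law does not depend on $\hat\beta$. Since $\cH_0=\cH-\cH(0,0)$ by definition, the conditional law of $\cH_0$ given $\cH(0,0)=\hat\beta$ does not depend on $\hat\beta$, which is the claimed independence jointly over any finite collection of space-time points.

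The main obstacle is part (d). The spatial scale $\alpha L^{-1}$ is much finer than the scale $L^{-1/4}$ of \eqref{eq:LiuWang}, so the Brownian-bridge formula at $t=1$ collapses to the deterministic profile $\min\{x,-x\}=-|x|$; translated to the scale $\alpha L^{-1}$ this produces the drift $\sqrt{L}(\rH(\alpha L^{-1},1)-L)\to -2|\alpha|$. The Gaussian fluctuation $\Bts(2\alpha)$ must then arise from the local Brownianity (diffusivity $2$) of the Airy$_2$ process $\rH(\cdot,1)+(\cdot)^2$ on the microscopic scale $L^{-1}$. To combine both contributions rigorously and capture the joint law at several spatial points simultaneously, my plan is to specialize the explicit finite-dimensional formula for $\cH$ from Section \ref{sec:def_cH} to the slice $\tau=0$ and verify by asymptotic analysis that the resulting distribution is Gaussian with mean vector $(-2|\alpha_i|)$ and the two-sided Brownian covariance $2\min(|\alpha_i|,|\alpha_j|)\mathbf{1}_{\alpha_i\alpha_j>0}$.
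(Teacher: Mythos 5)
Your treatments of (a) and (b) are essentially correct, though they take a more probabilistic route than the paper: the paper reads (a) directly off Definition \ref{def:cH_by_tail} (equation \eqref{eq:cH_00}), and obtains (b) purely algebraically from the shift property of $\FT$ (Proposition \ref{prop:shift_parameters}, via \eqref{eq:aux_12_31_02}), whereas you use Theorem \ref{thm:main}(a) together with the one-point tail asymptotics \eqref{eq:tail}/\eqref{eq:upper_tail_FGUE}. Both routes work; for (b) you should phrase the manipulation as a product of two convergent factors, $\prob(\cH(\alpha,\tau)\ge\beta)=\lim_{L}\frac{\prob(A\cap B)}{\prob(B)}\cdot\frac{\prob(B)}{\prob(A)}$ with $A,B$ the two pre-limit tail events, rather than as a ratio of the two limits, since positivity of $\prob(\cH(-\alpha,-\tau)\ge-\beta)$ is not known a priori.

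The genuine problems are in (c) and (d). For (c), deducing independence from Theorem \ref{thm:main}(b) does not work. Within the paper's logical structure the proof of Theorem \ref{thm:main}(b) itself invokes the independence of $\cH_0$ and $\cH(0,0)$ (see the end of Section \ref{sec:proof_mainthm_b}), so your argument is circular; and even taken on its own terms, Theorem \ref{thm:main}(b) is a statement about the pre-limit field conditioned on $\rH=L+\hat\beta L^{-1/2}$, and converting it into ``the conditional law of $\cH$ given $\cH(0,0)=\hat\beta$ is $\hat\beta+\cH_0$'' requires interchanging the $L\to\infty$ limit with the $\epsilon\to0$ conditioning limit, which is exactly the exchange the authors say (in the remark following Theorem \ref{thm:main}) they cannot justify and deliberately avoid. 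The paper instead proves (c) algebraically: Lemma \ref{lm:joint_density} shows the joint density of $(\cH_0(\alpha_\ell,\tau_\ell))_\ell$ and $\cH(0,0)$ factors as $\dP\cdot\dQ$, an immediate consequence of the shift identity \eqref{eq:shift_parameter_special2} for derivatives of $\FT$. For (d), your target law (drift $-2|\alpha_i|$, covariance $2\min(|\alpha_i|,|\alpha_j|)$ for same-sign arguments) is right, but the plan defers the real work and ``asymptotic analysis'' is the wrong tool: on the slice $\tau=0$ there is no large parameter. The paper's argument is exact: by Proposition \ref{prop:finite_dimensional_cH_0} (which again rests on (c)) the joint tail of $\cH_0(\cdot,0)$ is a $\partial/\partial\beta_k$-derivative of $\FT$; a contour-deformation/residue argument (Lemma \ref{lm:aux_1_13_1}) shows only the terms with all $n_\ell=1$ and outer contours survive, and the remaining Gaussian integrals are matched exactly with the Brownian tail probabilities \eqref{eq:aux_1_13_05} and \eqref{eq:aux_1_13_06}, yielding $\Bts(2\alpha)-2|\alpha|$. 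Some such exact identification of the full joint law is needed in any case: computing a mean vector and covariance matrix from the tail formula would not by itself establish that the finite-dimensional laws are Gaussian.
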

\begin{rmk}
It is not surprising that $\cH(0,0)$ is an exponential random variable. In fact, the upper tail estimate of $\FGUE$ (see \eqref{eq:KPZ_GUE} and \eqref{eq:upper_tail_FGUE} for example) implies
\begin{equation}
    \prob\left(\rH(0,1)\ge L + \beta L^{-1/2}\mid \rH(0,1) \ge L \right)=\frac{1- \FGUE(L+\beta L^{-1/2})}{1- \FGUE(L)}\to e^{-2\beta}
\end{equation}
for fixed $\beta \ge 0$ when the large parameter $L\to\infty$. 
\end{rmk}
\begin{rmk}
The last property is due to Duncan Dauvergne. Through personal communication, Dauvergne told us the property from the perspective of the Airy line ensemble techniques. We verified the property using the formulas obtained in this paper. These properties in Proposition \ref{prop:properties_cH} also imply the following simple result: If $X$ is an exponential random variable of parameter $2$, and $Z$ is a standard Gaussian random variable independent of $X$, then $\prob(X+\sqrt{2\alpha}Z-2\alpha \ge \beta) = e^{-2\beta}\prob(X+\sqrt{2\alpha}Z-2\alpha \ge -\beta)$ holds for any $\alpha>0$ and $\beta\in\realR$. It is an elementary exercise to verify this identity, and we skip the proof since it is not needed for our argument.
\end{rmk}

The proof of Proposition \ref{prop:properties_cH} is given in Section \ref{sec:proof_property}.

\bigskip

If we zoom out the fields $\cH$ or $\cH_0$, we are able to see the Brownian-like (for the negative times) and KPZ type (for the positive times) behaviors. In other words, they are random fields on $\realR\times\realR$ that interpolate a Brownian-like field and the KPZ fixed point. More precisely, we have

\begin{prop}[Large scale limits of $\cH$ and $\cH_0$]
\label{prop:crossover}
Both $\cH$ and $\cH_0$ have the following large scale limits.
\begin{enumerate}[(a)]
\item In the negative time regime, 
\begin{equation}
\label{eq:negative_time_limit}
\frac{1}{\sqrt{2\lambda}} \left(\cH\left(\frac{\lambda^{1/2}\rx}{\sqrt{2}},\lambda \rt\right)- \lambda \rt \right), \frac{1}{\sqrt{2\lambda}} \left(\cH_0\left(\frac{\lambda^{1/2}\rx}{\sqrt{2}},\lambda \rt\right)- \lambda \rt \right) 
\end{equation}
both converge to
\begin{equation}
 \min\left\{\mathbf{B}_1(-\rt)+\rx,\mathbf{B}_2(-\rt)-\rx \right\}, \quad (\rx,\rt)\in \realR\times(-\infty,0)
\end{equation}
in the sense of convergence of finite-dimensional distributions as $\lambda\to \infty$. Here $\mathbf{B}_1$ and $\mathbf{B}_2$ are two independent standard Brownian motion.
\item In the positive time regime, both
\begin{equation}
\label{eq:positive_time_limit}
\lambda^{-1/3}   \cH(\lambda^{2/3}\rx,\lambda \rt), \lambda^{-1/3}   \cH_0(\lambda^{2/3}\rx,\lambda \rt)   \to  \rH(\rx,\rt), \quad (\rx,\rt)\in \realR\times(0, \infty)
 \end{equation}
 in the sense of convergence of finite-dimensional distributions as $\lambda\to \infty$. Here $\rH$ denotes the KPZ fixed point with the narrow-wedge initial condition.
\end{enumerate}
\end{prop}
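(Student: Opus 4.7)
The plan is to work directly with the explicit finite-dimensional joint distribution formulas for $\cH$ derived earlier in the paper (as multiple-contour integrals / Fredholm determinants), substitute each of the two rescalings, and carry out a saddle-point analysis. Observe first that Proposition \ref{prop:properties_cH}(c) reduces the two statements to one: since $\cH_0 = \cH - \cH(0,0)$ is independent of $\cH(0,0)$, and $\cH(0,0)$ is a bounded random variable (exponential of parameter $2$), any rescaling that sends fluctuations to infinity washes out the contribution of $\cH(0,0)$. Thus it suffices to prove each convergence for $\cH$, and the corresponding claim for $\cH_0$ follows immediately.

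For part (a), fix test points $(\rx_i,\rt_i,\rb_i)$ with $\rt_i<0$, and set $\alpha_i = \lambda^{1/2}\rx_i/\sqrt{2}$, $\tau_i = \lambda\rt_i$, $\beta_i = \sqrt{2\lambda}\,\rb_i + \lambda\rt_i$. Substitute these into the multiple-integral representation of
\[
\prob\!\left(\bigcap_{i=1}^{m}\{\cH(\alpha_i,\tau_i)\ge\beta_i\}\right),
\]
rescale the integration variables so that the effective exponent is of order $\sqrt{\lambda}$, locate the saddle point(s), and perform a Gaussian expansion. Under this scaling the quadratic form around the saddle should produce the Brownian covariance $\min(-\rt_i,-\rt_j)$, while the ``minimum of two'' structure $\min\{\mathbf{B}_1(-\rt_i)+\rx_i,\mathbf{B}_2(-\rt_i)-\rx_i\}$ is expected to come from a bifurcation of the saddle into two symmetric branches indexed by the sign of the spatial contribution. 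This is the upper-tail analogue, at a finer zoom, of the pair of Brownian bridges appearing in \eqref{eq:LiuWang}.

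For part (b), use the same formula with $\alpha_i = \lambda^{2/3}\rx_i$, $\tau_i = \lambda\rt_i$ (with $\rt_i>0$), and $\beta_i = \lambda^{1/3}\rh_i$. Now the effective exponent is of order $\lambda$, placing us in the standard KPZ asymptotic regime: the saddle-point analysis produces Airy-type rescaling, the kernel converges to the extended Airy kernel, and the resulting Fredholm determinant matches the known formula for the joint finite-dimensional distribution of $\rH(\rx_i,\rt_i)$ with narrow-wedge initial data. In both parts, a consistency check is provided by the $\tau=0$ slice computed in Proposition \ref{prop:properties_cH}(d): the $\lambda\to\infty$ limit in part (a) evaluated at $\rt=0$ gives $-|\rx|$, matching the drift of $\Bts(2\alpha)-2|\alpha|$ under the corresponding Brownian rescaling.

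The main obstacle is the analysis in part (a). The negative-time scaling lies outside the standard Airy regime: we must extract a non-Gaussian ``minimum of two independent Brownian motions'' law from a formula naturally written in terms of Airy functions, and correctly assemble the limit from competing saddle contributions whose split mirrors the pair-of-bridges geometry of \eqref{eq:LiuWang}. In addition, uniform tail estimates, of the same flavor as those used to establish Theorem \ref{thm:main}, are needed to justify exchanging $\lambda\to\infty$ with the Fredholm/multiple-integral expansions. Part (b), by contrast, is technically more routine once the formula for $\cH$ is in hand, and mostly amounts to rederiving the classical KPZ scaling out of the upper-tail kernel.
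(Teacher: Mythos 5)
Your setup matches the paper's: the reduction of $\cH_0$ to $\cH$ (because $\cH(0,0)$ does not depend on $\lambda$ — note it is exponential, hence not ``bounded'', but being a.s.\ finite is enough) and the choice of scalings are correct. The genuine gap is in part (a), which you yourself flag as the main obstacle but leave unresolved: you say the minimum-of-two-Brownian-motions law ``is expected to come from a bifurcation of the saddle into two symmetric branches'', but a Gaussian expansion around a saddle producing the covariance $\min(-\rt_i,-\rt_j)$ cannot by itself yield this non-Gaussian law, and you give no mechanism for how the contour formula produces the law of a minimum of two \emph{independent} processes. In the paper's proof this is the entire content of part (a): substituting $\alpha_\ell=\lambda^{1/2}\rx_\ell/\sqrt2$, $\tau_\ell=\lambda\rt_\ell$, $\beta_\ell=\lambda\rt_\ell+\sqrt{2\lambda}\,\rh_\ell$ into the series of Definition \ref{defn:FT}, one shows that every term with some $n_\ell\ge 2$ is exponentially suppressed in $\lambda$ (with a uniform bound to justify dominated convergence), that the surviving $n_2=\cdots=n_m=1$ term localizes on $\lambda^{-1/2}$-neighborhoods of the two points $\pm1$, and that after integrating out the $z$-variables the limit \emph{factorizes} into a product of a left-contour block and a right-contour block of Gaussian integrals, identified explicitly in \eqref{eq:aux_1_13_05}--\eqref{eq:aux_1_13_06} with $\prob\bigl(\bigcap_\ell\{\bB_1(-\rt_\ell)\ge \rh_\ell-\rx_\ell\}\bigr)$ and $\prob\bigl(\bigcap_\ell\{\bB_2(-\rt_\ell)\ge \rh_\ell+\rx_\ell\}\bigr)$; the product of these two tail probabilities is precisely the tail probability of the minimum. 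You also do not address equal times: the negative-time asymptotics (bending the right contours to angle $\pm\pi/3$) requires strictly ordered $\rt_1<\cdots<\rt_{m-1}$, and one needs the continuity-extension result (Proposition \ref{prop:converegence_fields_extension}) to upgrade this to convergence of all finite-dimensional distributions.

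In part (b) your target object is wrong for multiple times: the joint law of the KPZ fixed point at several distinct times is \emph{not} a Fredholm determinant of the extended Airy kernel (that kernel only describes the fixed-time Airy$_2$ process), so ``the kernel converges to the extended Airy kernel'' cannot close the argument. What the paper does is rescale the contours in the formula of Proposition \ref{prop:FT_alt} by $\lambda^{-1/3}$ under $\alpha_\ell=\lambda^{2/3}\rx_\ell$, $\tau_\ell=\lambda\rt_\ell$, $\beta_\ell=\lambda^{1/3}\rh_\ell$, and match the resulting limit term by term with the paper's own multi-time tail probability formula for $\rH$ (Proposition \ref{prop:KPZ_tail_prob}), again with a uniform bound to exchange the limit with the sum and integrals. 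Your scalings are right, but the identification step that actually proves convergence to $\rH(\rx,\rt)$ as a space-time field still has to be carried out against the multi-time formula, not against an extended-Airy-kernel determinant.
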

\begin{rmk}
Both results can be extended to $\rt=0$. In fact, using Proposition \ref{prop:properties_cH} (d), we have
\begin{equation}
\frac{1}{\sqrt{2\lambda}} \cH_0\left(\frac{\lambda^{1/2}\rx}{\sqrt{2}},0\right)\stackrel{d}{=} \frac{1}{\sqrt{2\lambda}} \left(\Bts(\sqrt{2\lambda}\rx) -\sqrt{2\lambda}|\rx|\right) \to -|\rx| = \min\{\rx,-\rx\}
\end{equation}
and
\begin{equation}
\lambda^{-1/3}   \cH_0(\lambda^{2/3}\rx,0)\stackrel{d}{=}\lambda^{-1/3} \left(\Bts(2\lambda^{2/3}\rx) - 2\lambda^{2/3}|\rx|\right)\to -\infty\mathbf{1}_{\rx\ne0} = \rH(\rx,0)
\end{equation}
as $\lambda\to\infty$.
\end{rmk}
\begin{rmk}
One can easily recover the $1:2:3$ scaling invariance of the KPZ fixed point (with the narrow-wedge initial condition) from \eqref{eq:positive_time_limit}. In fact, both $\rH(\rx,\rt)$ and $c^{-1/3}\rH(c^{2/3}\rx,c\rt)$ are the limits of $$(c\lambda)^{-1/3}\cH((c\lambda)^{2/3}\rx,c\lambda \rt)= c^{-1/3}\cdot \lambda^{-1/3}\cH(\lambda^{2/3}\cdot c^{2/3}\rx,\lambda\cdot c \rt)$$ as $\lambda\to\infty$, hence they have the same finite-dimensional distributions.
\end{rmk}

We can also see that Proposition \ref{prop:crossover} is consistent with the known results \eqref{eq:LiuWang} and \eqref{eq:NZ}. In fact, since Brownian bridges locally behave like Brownian motions, we have
\begin{equation}
\label{eq:Brownian_near_edge}
\epsilon^{-1/2}\left(\mathbb{B}_1(1+\epsilon \rt) + \sqrt{\epsilon}\rx\right),\quad \epsilon^{-1/2}\left(\mathbb{B}_2(1+\epsilon \rt) - \sqrt{\epsilon} \rx\right),
\end{equation}
converges to $\mathbf{B}_1(-\rt)+\rx,\mathbf{B}_2(-\rt)-\rx$ jointly on $(\rx,\rt)\in \realR\times(-\infty,0)$, as $\epsilon\to 0$. On the other hand, the KPZ fixed point enjoys the $1:2:3$ scaling invariance, hence
\begin{equation}
\epsilon^{-1/2}  \rH(\epsilon \rx, \epsilon^{3/2}\rt) \stackrel{d}{=} \rH(\rx,\rt)
\end{equation}
for $(\rx,\rt)\in\realR\times(0,\infty)$. Thus, the fields $\cH$ and $\cH_0$ provide the transition between \eqref{eq:LiuWang} and \eqref{eq:NZ}.

\bigskip

The proof of Proposition \ref{prop:crossover} is given in Section \ref{sec:crossover}.

\bigskip

The fields $\cH$ and $\cH_0$ are new to our best knowledge. Since they are the limiting fields of the KPZ fixed point near a conditioned high point, and the KPZ fixed point is expected to be universal in the Kardar-Parisi-Zhang universality class, we expect $\cH$ and $\cH_0$ are universal local limits for all the models in the Kardar-Parisi-Zhang universality class near a conditioned high point. In an upcoming work, we are planning to verify it for the totally asymmetric simple exclusion process, which is one of the simplest models in the Kardar-Parisi-Zhang universality class. It might be possible to verify the same limits for the discrete polynuclear growth model, or equivalently,  the discrete totally asymmetric simple exclusion process, using the formulas in \cite{johansson2020long, Liao22}. However, for other models, it would be more difficult to consider the analogous upper tail limits due to the lack of exact formulas of the multipoint distribution of the height function.

We conjecture that $\cH$ and $\cH_0$ will also appear in the periodic KPZ fixed point conditioning on the upper tail event. The heuristic reason is that the scaling window for the upper tail field is so small that the periodicity might not be visable unless the period also shrink to the size of the scaling window. We leave this verification as a future project.

Finally, by the same reason, we conjecture that $\cH$ and $\cH_0$ do not depend on the initial condition. Below we heuristically show the reason why the information of the initial condition would disappear in the same scaling window for $\cH$ and $\cH_0$. If we change the narrow-wedge initial condition to the flat initial condition, the limiting conditional random field of the KPZ fixed point before the conditioned high point was also obtained in \cite{Liu-Wang22}. It turns out that the limiting field analogous to \eqref{eq:LiuWang} 
is 
\begin{equation}
\label{eq:pre_peak_flat}
\min\left\{\mathbb{B}_1(t)+x+ \frac{(1-t)Z}{\sqrt{2}}, \mathbb{B}_2(t)-x- \frac{(1-t)Z}{\sqrt{2}} \right\},\qquad (x,t)\in \realR\times (0,1)
\end{equation}
where $\mathbb{  B}_1$ and $\mathbb{  B}_2$ are two independent Brownian bridges and $Z$ is a standard normal random variable independent of $\mathbb{  B}_1$ and $\mathbb{  B}_2$. Near $t=1+\epsilon \rt \approx 1$, 
\begin{equation}
\epsilon^{-1/2}\left(\mathbb{ B}_1(1+\epsilon \rt) + \sqrt{\epsilon}\rx -\epsilon \frac{\rt Z}{\sqrt{2}}\right),\quad \epsilon^{-1/2}\left(\mathbb{ B}_2(1+\epsilon \rt) - \sqrt{\epsilon}\rx +\epsilon \frac{\rt Z}{\sqrt{2}}\right)\end{equation}
still converges to $\mathbf{B}_1(-\rt)+\rx,\mathbf{B}_2(-\rt)-\rx$ jointly on $\realR\times(-\infty,0)$, as $\epsilon\to 0$. The impact of the initial condition (which appears as an extra drift $Z$) disappears when we zoom in the field near $t\approx 1$ from below with the $1:2$ scaling. We conjecture that the same argument holds for a general initial condition. In other words, the initial condition will not affect the limiting behaviors of the KPZ fixed point near the conditioned high point $(0,1)$. More precisely, the limit is always $\min\{\mathbf{B}_1(-\rt)+\rx,\mathbf{B}_2(-\rt)-\rx\}$ when we zoom in the conditional field near the conditioned point from below. Another observation is that at the point $(0,1)$, we have the following asymptotics for any fixed $\beta\ge 0$ and sufficiently large $L$,
\begin{equation}
\begin{split}
\prob\left(\mathrm{H_{flat}^{KPZ}}(0,1)\ge L +\beta L^{-1/2} \mid \mathrm{H_{flat}^{KPZ}}(0,1)\ge L\right)&= \frac{1-\mathrm{F_{GOE}}(2^{2/3}(L +\beta L^{-1/2}))}{1-\mathrm{F_{GOE}}(2^{2/3}L)} \\
&\approx \frac{e^{-\frac{2}{3}(2^{2/3}(L+\beta L^{-1/2}))^{3/2}}}{e^{-\frac{2}{3}(2^{2/3}L)^{3/2}}}\\
&\approx e^{-2\beta}
\end{split}
\end{equation}
which is the same as the tail probability of $\cH(0,0)$. Here $\mathrm{H_{flat}^{KPZ}}$ denotes the KPZ fixed point with the flat initial condition and $\mathrm{F_{GOE}}$ is the GOE Tracy-Widom distribution. We also used the right tail asymptotics of $\mathrm{F_{GOE}}$: $1-\mathrm{F_{GOE}}(L)\approx \frac{1}{4\sqrt{\pi}L^{3/2}}e^{-\frac23 L^{3/2}}$ as $L\to\infty$. See \cite[Equations (25) and (26)]{BBD08} or \cite{DV13} for the right tail of the GOE Tracy-Widom distribution.

The above discussions suggest the following conjecture. Note that the assumption on the growth rate of the initial condition is needed to ensure the existence of the KPZ fixed point at the point $(0,1)$. 
\begin{conj}
Theorem \ref{thm:main} holds for the KPZ fixed point with a general initial condition which grows sufficiently slower than the function $f(x)=x^2$ when $|x|$ becomes large, and the upper tail field of the KPZ fixed point is independent of the initial condition.
\end{conj}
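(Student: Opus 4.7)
The plan is to reduce to the narrow-wedge case via the variational representation of the KPZ fixed point in terms of the directed landscape \cite{DOV}. For an initial condition $f$ with $f(y) = o(y^{2})$ at infinity, one has
\begin{equation*}
\mathrm{H^{KPZ}_{f}}(x,t) = \sup_{y\in\realR}\bigl\{f(y) + \mathcal{L}(y,0;x,t)\bigr\},
\end{equation*}
while the narrow-wedge case is $\rH(x,t) = \mathcal{L}(0,0;x,t)$. Let $y^{*}_L$ denote the (a.s.\ unique) argmax when $(x,t)=(0,1)$. The heuristic at the end of the introduction, together with the matching right-tail exponents for GOE and GUE, suggests that on the $L^{-1/2}$ scale near $(0,1)$, the increment $f(y^{*}_L)+\mathcal{L}(y^{*}_L,0;\alpha L^{-1},1+\tau L^{-3/2})-L$ behaves as it would for narrow wedge, since the dependence on $y^{*}_L$ is absorbed into the spatial invariance of $\mathcal{L}$ in its starting point.

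Concretely, I would carry out the following program. First, establish a one-point upper-tail asymptotic $\prob(\mathrm{H^{KPZ}_{f}}(0,1)\ge L+\beta L^{-1/2})/\prob(\mathrm{H^{KPZ}_{f}}(0,1)\ge L)\to e^{-2\beta}$, matching the narrow-wedge rate; for special $f$ (flat, half-Brownian, stationary) this follows from the known Fredholm determinant formulas and the tail asymptotics of the Tracy--Widom and Baik--Rains distributions, while for general sub-parabolic $f$ a universality argument based on the landscape representation combined with uniform Airy line ensemble tail bounds \cite{ganguly2023brownian,DDV24,DT24} should do. Second, prove an upper-tail localization of the argmax of the form $y^{*}_L = y_{0}+o_{\prob}(L^{-1/4})$, where $y_{0}$ is the deterministic optimizer of $f(y)-y^{2}$, and control the error $f(y^{*}_L)-f(y_{0})$ at the $L^{-1/2}$ level by exploiting the first-order condition $f'(y_{0})=2y_{0}$. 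Third, use stationarity of $\mathcal{L}$ to rewrite the conditional field as a narrow-wedge KPZ fixed point emanating from $(y_{0},0)$ plus negligible error, and then apply Theorem \ref{thm:main} together with Remark \ref{rmk:generalization_thm} to identify the limit as $\cH$, respectively $\cH_0$ in the point-conditioning case.

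The main obstacle is the second step: quantitative localization of the optimizer $y^{*}_L$ in the upper tail at a scale fine enough that $f(y^{*}_L)-f(y_{0})$ falls well below the $L^{-1/2}$ resolution of Theorem \ref{thm:main}. Naively one expects $y^{*}_L$ to fluctuate on the $L^{-1/4}$ scale of the pre-peak Brownian strip of \cite{Liu-Wang22}, so one needs to exploit the vanishing first derivative of $f(y)-y^2$ at $y_{0}$ to gain an extra half power of $L$. For initial conditions with exact formulas, this could in principle be read off from a steepest descent analysis parallel to the one in this paper, with the initial-condition-dependent kernel localized near the critical point; for general $f$ a robust approach would combine the landscape's Brownian regularity, a Radon--Nikodym argument in the spirit of \cite{ganguly2023brownian}, and the uniform tail bounds for the Airy line ensemble. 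A secondary but substantial difficulty is justifying the point-conditioning $\{\mathrm{H^{KPZ}_{f}}(0,1)=L\}$ limit needed for part (b) of Theorem \ref{thm:main}, which requires the step-one asymptotic at the level of densities and hence additional equicontinuity estimates; together, these two points explain why the statement is presently only a conjecture and why the promised TASEP verification, being based on explicit formulas, is expected to come first.
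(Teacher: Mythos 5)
You should note at the outset that the statement you are proving is left as a conjecture in the paper: there is no proof to compare against, only the heuristic discussion preceding it (the matching $e^{-2\beta}$ tail ratio for the flat case and the disappearance of the initial-condition drift in \eqref{eq:pre_peak_flat} under the $1:2$ zoom), and the authors explicitly defer even the TASEP verification to future work. Your proposal is a sensible program in the same spirit, but it is not a proof, and the decisive gap is in your second and third steps. The quantity you propose to control, $f(y^{*}_L)-f(y_{0})$, is not the dominant error in replacing $\mathrm{H}^{\mathrm{KPZ}}_{f}(x,t)=\sup_{y}\{f(y)+\mathcal{L}(y,0;x,t)\}$ by $f(y_{0})+\mathcal{L}(y_{0},0;x,t)$: the replacement also incurs the landscape increment $\mathcal{L}(y^{*}_L,0;x,t)-\mathcal{L}(y_{0},0;x,t)$, which is locally Brownian in the first argument, so even granting the optimistic localization $|y^{*}_L-y_{0}|=O(L^{-1/4})$ this increment is of order $L^{-1/8}$, far above the $L^{-1/2}$ resolution of Theorem \ref{thm:main}. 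The only way out is cancellation: the same increment is shared by all points of the local window provided the argmax is common to them, so the \emph{increments} of the field may survive the replacement; but then the conditioning events $\{\mathrm{H}^{\mathrm{KPZ}}_{f}(0,1)\ge L\}$ and $\{f(y_{0})+\mathcal{L}(y_{0},0;0,1)\ge L\}$ no longer coincide, and since these events have probability of order $L^{-3/2}e^{-\frac43 L^{3/2}}$, you would need their symmetric difference to be negligible \emph{relative to this exponentially small probability} --- a much stronger statement than argmax localization, and nothing in your outline produces it. Relatedly, $y^{*}_L$ is measurable with respect to the very landscape you condition on, so "a narrow-wedge KPZ fixed point emanating from $(y_{0},0)$ plus negligible error," to which Theorem \ref{thm:main} and Remark \ref{rmk:generalization_thm} could be applied, requires a decoupling argument that is missing.

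Two further points. Your first step (the one-point tail ratio $\to e^{-2\beta}$ for general sub-parabolic $f$) is itself open outside the solvable initial conditions, and the first-order condition $f'(y_{0})=2y_{0}$ presupposes smoothness and uniqueness of the optimizer that the conjecture does not assume. And for part (b), the point-conditioning statement is not a minor equicontinuity add-on: in the narrow-wedge case the paper obtains it only through exact derivative formulas (Proposition \ref{prop:derivative_multipoint_tail} feeding into Proposition \ref{prop:main}), and no analogue is available for general $f$. So your write-up correctly identifies where the difficulties lie, but it neither closes them nor matches an argument in the paper, because the paper deliberately leaves this statement open.
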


\subsection{Multipoint upper tail estimate of the KPZ fixed point}
\label{sec:main_result2}
The proof of Theorem \ref{thm:main} relies on an upper tail estimate of the KPZ fixed point on a cluster of space-time points near a given point. In order to introduce the result, we first define an order $\prec$ on $\realR^2$.
\begin{defn}
\label{defn:prec}
We say $(\alpha,\tau) \prec (\alpha',\tau')$ for two points $(\alpha,\tau), (\alpha',\tau')\in\realR^2$, if either one of the following two conditions are satisfied:
\begin{enumerate}[(i)]
\item $\tau<\tau'$;
\item $\tau=\tau'$ and $\alpha<\alpha'$.
\end{enumerate}
\end{defn}
Note that $\prec$ is a total order. Any two points on $\realR^2$ are comparable by the order $\prec$ and there is a unique way to arrange a set of distinct points on $\realR^2$ by this order.

Now we introduce the multipoint upper tail estimate. For simplicity, we consider a cluster of points near $(0,1)$. The general case can be deduced to this case using the same argument as in the Remark \ref{rmk:generalization_thm}.

Define the following rescaled KPZ fixed point near $(0,1)$
\begin{equation}
\label{eq:def_HH}
\HH(\alpha,\tau):= \sqrt{L} \left(\rH(\alpha L^{-1}, 1+ \tau  L^{-3/2}) - L\right)
\end{equation}
for $(x,\tau)\in\realR^2$ and $L>0$ satisfying $1+\tau L^{-3/2}>0$.

\begin{prop}
\label{prop:main}
Assume that $(\alpha_1,\tau_1)\prec \cdots \prec (\alpha_m,\tau_m)$ are $m$ points on the plane $\realR^2$, and $\bbeta=(\beta_1,\cdots,\beta_m)\in\realR^m$. We have
\begin{equation}
\label{eq:prop_main}
\begin{split}
&16\pi L^{3/2} e^{\frac{4}{3}L^{3/2}}\prob \left( \bigcap_{\ell=1}^m \left\{ \HH(\alpha_\ell,\tau_\ell)\ge \beta_\ell  \right\}\right)
\to \FT(\bbeta;(\alpha_1,\tau_1),\cdots,(\alpha_m,\tau_m))
\end{split}
\end{equation}
and
\begin{equation}
    \label{eq:prop_main_derivative}
    \begin{split}
    &16\pi L^{3/2} e^{\frac{4}{3}L^{3/2}} \frac{\partial}{\partial \beta_k}\prob \left( \bigcap_{\ell=1}^m \left\{ \HH(\alpha_\ell,\tau_\ell)\ge \beta_\ell  \right\}\right)
    \to \frac{\partial}{\partial \beta_k}\FT(\bbeta;(\alpha_1,\tau_1),\cdots,(\alpha_m,\tau_m)),\quad 1\le k\le m,
    \end{split}
    \end{equation}
as $L\to\infty$. The function $\FT$ is defined in Definition \ref{defn:FT}. 
\end{prop}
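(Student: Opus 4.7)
My approach is to analyze an exact multipoint distribution formula for $\prob\bigl(\bigcap_{\ell=1}^{m}\{\rH(x_\ell,t_\ell) \le h_\ell\}\bigr)$ for the narrow-wedge KPZ fixed point, available in either Fredholm-determinant or contour-integral form in the literature. Substituting the scaling $x_\ell = \alpha_\ell L^{-1}$, $t_\ell = 1+\tau_\ell L^{-3/2}$, $h_\ell = L+\beta_\ell L^{-1/2}$ produces an $L$-dependent kernel $K_L$ whose large-$L$ asymptotics drive the proof. The joint upper-tail probability is then recovered from the joint CDFs by inclusion--exclusion,
\begin{equation}
\prob\left(\bigcap_{\ell=1}^{m}\{\HH(\alpha_\ell,\tau_\ell) \ge \beta_\ell\}\right) = \sum_{S \subseteq [m]} (-1)^{|S|}\prob\left(\bigcap_{\ell \in S}\{\HH(\alpha_\ell,\tau_\ell) \le \beta_\ell\}\right),
\end{equation}
so that each summand is again a Fredholm determinant.

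In the upper-tail regime each $\det(I-K_L^{(S)})$ is close to $1$ and admits the expansion $1 - \mathrm{tr}(K_L^{(S)}) + O\bigl(\|K_L^{(S)}\|_1^2\bigr)$. I expect $\mathrm{tr}(K_L^{(S)})$ to be of exact order $L^{-3/2} e^{-\frac{4}{3}L^{3/2}}$ while $\|K_L^{(S)}\|_1^2 = o\bigl(L^{-3/2}e^{-\frac{4}{3}L^{3/2}}\bigr)$. Since $\sum_S (-1)^{|S|} = 0$ for $m\ge 1$, the constant terms cancel in the alternating sum and the leading contribution comes entirely from the traces. The main analytic step is a steepest-descent analysis of the contour integrals defining these traces: a dominant saddle of the phase function produces the exponential factor $e^{-\frac{4}{3}L^{3/2}}$, Gaussian integration transverse to the saddle yields the prefactor $(16\pi L^{3/2})^{-1}$ (matching the one-point GUE upper tail $1-\FGUE(L)\sim (16\pi L^{3/2})^{-1}e^{-\frac{4}{3}L^{3/2}}$), and the remaining finite-dimensional integral in the transverse directions, parameterized by $\bbeta$ and the space-time shifts, would be identified with $\FT$ from Definition~\ref{defn:FT}.

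For the derivative statement \eqref{eq:prop_main_derivative} I would differentiate the same exact formula with respect to $\beta_k$ before taking the limit. Differentiation in $\beta_k$ simply pins one integration variable at the boundary, so the same steepest descent applies to the derivative; the exchange of $\partial_{\beta_k}$ and the $L\to\infty$ limit is then justified by uniform convergence of the relevant integrand and its derivative on compact sets of $\bbeta$.

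The main obstacle, as I anticipate it, is the uniform control of subleading contributions. The steepest-descent contours depend on all time parameters $\tau_\ell$ simultaneously and must be deformed in a way that respects the $\prec$-ordering, without crossing singularities of the integrand and without spoiling any of the descent directions; as the rescaled points concentrate near $(0,1)$, the relevant saddles nearly coalesce, making the contour geometry delicate. Showing that all non-leading contributions are $o\bigl(L^{-3/2}e^{-\frac{4}{3}L^{3/2}}\bigr)$ uniformly in $\bbeta$ on compact sets is the key technical step, and it is precisely what is required to legitimize passing the derivative through the limit in \eqref{eq:prop_main_derivative}.
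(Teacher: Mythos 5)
There is a genuine gap at the central step of your plan. After you write $\prob\bigl(\bigcap_{\ell}\{\HH(\alpha_\ell,\tau_\ell)\ge\beta_\ell\}\bigr)=\sum_{S}(-1)^{|S|}\prob\bigl(\bigcap_{\ell\in S}\{\HH\le\beta_\ell\}\bigr)$ and expand each Fredholm determinant as $1-\mathrm{tr}(K_L^{(S)})+\cdots$, the trace terms do \emph{not} supply the leading contribution for $m\ge 2$: the trace of an extended (multi-time) kernel only involves its diagonal blocks, so $\mathrm{tr}(K_L^{(S)})=\sum_{\ell\in S}T_\ell$ with $T_\ell$ independent of $S$, and since $\sum_{S\ni\ell}(-1)^{|S|}=0$ for $m\ge 2$, the traces cancel identically in the alternating sum, exactly as the constants do. The genuine multipoint information of order $L^{-3/2}e^{-\frac43 L^{3/2}}$ sits in the higher-order terms of the expansion, which involve the off-diagonal blocks of the extended kernel; these contain heat-kernel (non-Airy) parts that are not exponentially small on the restricted domains, so your claim that $\|K_L^{(S)}\|_1^2=o\bigl(L^{-3/2}e^{-\frac43L^{3/2}}\bigr)$ is unjustified (and, without a conjugation/regularization, $\|K_L^{(S)}\|_1$ is not even exponentially small). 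In short, the inclusion--exclusion-plus-first-order-trace route computes $0+o(1)$ at the relevant scale and cannot recover $\FT$ for $m\ge2$; to salvage it you would have to control all orders of the Fredholm expansion, including the delicate cross terms between diagonal Airy parts and off-diagonal heat-kernel parts, uniformly in $L$ — which is precisely the cancellation problem the paper's argument is designed to avoid. The identification of the limit with the specific series of nested contour integrals in Definition \ref{defn:FT} is also left entirely schematic, and the equal-time case (which forces the bent $\pm\pi/5$ contours) is not addressed.

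For comparison, the paper sidesteps the cancellation by first proving an exact joint \emph{tail} probability formula, Proposition \ref{prop:KPZ_tail_prob}, obtained by induction from the multipoint formula of Liu '19 / Liu--Wang (so no alternating sum over subsets ever appears), and likewise a formula for its $\beta_k$-derivative (Proposition \ref{prop:derivative_multipoint_tail}). The scaling \eqref{eq:parameter_scale} is then inserted and each term of the series is analyzed by steepest descent after rescaling the level-$1$ contours to $\mp\sqrt{L}+L^{-1/4}\cdot(\text{fixed contours})$ and the remaining contours by $\sqrt{L}$: the factors $F_1$ at the saddles $\mp\sqrt{L}$ produce the $e^{-\frac43 L^{3/2}}$ and the Gaussian fluctuation integrals, only the $n_1=1$ terms survive, and Lemmas \ref{lm:convergence_ptwise} and \ref{lm:uniform_bound} together with dominated convergence give both \eqref{eq:prop_main} and \eqref{eq:prop_main_derivative} (the derivative is taken inside the integrals via Lemma \ref{lm:exchange_differentiation_sum}, not exchanged with the $L\to\infty$ limit by uniform convergence). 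If you want to pursue your route, the first thing you must supply is a mechanism that extracts the order-$L^{-3/2}e^{-\frac43L^{3/2}}$ coefficient from the full Fredholm series after the trace-level cancellation, which is a substantially harder estimate than the one you assumed.
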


One special case is that when $m=1$,   
$\FT(\beta;(\alpha,\tau)) = e^{\frac{2}{3}\tau-2\beta}$. The above result implies 
\begin{equation}
\label{eq:tail}
\prob\left(\HH(\alpha,\tau)\ge \beta\right) \approx e^{\frac{2}{3}\tau-2\beta} \cdot \frac{e^{-\frac{4}{3}L^{3/2}}}{16\pi L^{3/2}}
\end{equation}
as $L\to\infty$. If we further assume that $(\alpha,\tau)=(0,0)$ and $\beta=0$, this is the well known upper tail estimate of the GUE Tracy-Widom distribution (see \cite{tracy1994level,BBD08} for example). Intuitively, the upper tail of the KPZ fixed point at a set of points near $(0,1)$ should have the same leading order as the one point upper tail at $(0,1)$, as long as the set of points are enough close to $(0,1)$ and the bounds of the heights are also enough close. The Proposition \ref{prop:main} justifies this intuition and further provides the proper scaling under which each point in the cluster affects the approximation in a nontrivial way.

The proof of Proposition \ref{prop:main} is given in Section \ref{sec:proof_proposition}.

\subsection{Strategies and structure of the proofs}

There are two main technical parts in the paper. The first one is an asymptotic analysis of the joint tail probability function with the upper tail scaling. The asymptotics of the joint distribution function before the conditioned hight point was performed in \cite{Liu-Wang22}. However, the analysis was only performed for distinct times, while the case of equal times was handled using a probabilistic argument based on the continuity of the limiting field since there was a difficulty analyzing the formula when times are equal. Another related paper \cite{NZ22} considered the asymptotics of the conditional distribution after the conditioned hight point. However, only the one-point distribution function was analyzed due to the complexity of the formula. In this paper, we did a finer analysis with a different scaling and the analysis works for the multipoint distribution case with general space-time locations, including possibly equal times. Including the general space-time locations brings extra convergence issues of the integrals in the asymptotics analysis. We handled them by using different types of contours on the two half planes (which breaks the symmetry of the formula) and some careful estimate of the integrand along these contours. Another related analysis was performed for the upper tail conditional limit of the periodic KPZ fixed point before the high point \cite{baik2024pinchedup}, which was analogous to \cite{Liu-Wang22} and the asymptotic analysis was also limited to the case with distinct times. 

The second main technical part is to show that the limit of the conditional joint tail probability functions, the functions $\hat\FT$, actually defines a nontrivial random field. We need to verify the consistency conditions of the functions $\hat\FT$ for applying the Kolmogorov extension theorem. The verification is nontrivial. We were able to prove the consistency combing the techniques from the asymptotic analysis and some probabilistic arguments. 

\bigskip

Below is the structure of the paper.

In Section \ref{sec:def_scrH}, we first define the function $\FT$, the limit of the multipoint tail function of the KPZ fixed point in the upper tail scaling. In subsection \ref{sec:def_cH}, we use $\FT$ to define the multipoint tail probability functions $\hat \FT$, and then in Proposition \ref{prop:consistency} we show that $\hat\FT$ satisfy the consistency requirements for the Kolmogorov extension theorem, hence they define a random field $\cH$. Part of the proof of Proposition \ref{prop:consistency} relies on a tail estimate of the function $\FT$ which is postponed in Subsection \ref{sec:Properties_FT}, see Proposition \ref{prop:FT_tail}. Besides, we discuss some properties  the function $\FT$ and the field $\cH_0=\cH-\cH(0,0)$, such as the differentiability and the changes of the formulas under a shift of parameters in Subsections \ref{sec:Properties_FT} and \ref{sec:Properties_cH}.

Section \ref{sec:proof_proposition} is the proof of Proposition \ref{prop:main} about the asymptotics of the multipoint tail probability function with the upper tail scaling. We first prove a formula of the joint tail probability functions for the KPZ fixed point, see Proposition \ref{prop:KPZ_tail_prob}. Then we provide the proof of Proposition \ref{prop:main} in Subsection \ref{sec:proof_proposition_main_sketch}. The technical details involving the asymptotic analysis are postponed to Subsection \ref{sec:asympt}.

In Section \ref{sec:proof_mainthm}, we prove our main result, Theorem \ref{thm:main}, using Proposition \ref{prop:main} and some properties of the $\FT$ function proved in Subsection \ref{sec:Properties_FT}.

Finally, in Section \ref{sec:crossover} and Section \ref{sec:proof_property}  we prove the properties of the fields $\cH$ and $\cH_0$ listed in Proposition \ref{prop:crossover} and Proposition \ref{prop:properties_cH} respectively.

\section*{Acknowlegements}

The authors would like to thank Duncan Dauvergne, Daniel Remenik, Tejaswi Tripathi, and Yizao Wang for their comments and suggestions. Both authors were partially supported by NSF DMS-1953687 and DMS-2246683.

\section{The function $\FT$ and the upper tail field $\cH$}
\label{sec:def_scrH}

\subsection{Definition of $\FT$}
\label{sec:def_FT}

The function $\FT(\bbeta;(\alpha_1,\tau_1),\cdots,(\alpha_m,\tau_m))$ is defined for $\bbeta=(\beta_1,\cdots,\beta_m)\in\realR^m$ and $(\alpha_1,\tau_1),\cdots (\alpha_m,\tau_m)\in\realR^2$ satisfying $(\alpha_1,\tau_1) \prec \cdots \prec(\alpha_m,\tau_m)$, where the order $\prec$ was defined in Definition \ref{defn:prec}.

When $m\ge 2$, the definition involves a sum of contour integrals. There are $4(m-1)$ contours appearing in the definition. Let 
\begin{equation}
\Gamma_{m,\LL}^{\inn},\cdots,\Gamma_{2,\LL}^{\inn}, \Gamma_{2,\LL}^{\out},\cdots,\Gamma_{m,\LL}^\out
\end{equation}
be $2m-2$ contours, ordered from left to right, on the left half plane $\{u: \Re(u)<0\}$, each of which goes from $\infty e^{-\ii 2\pi/3}$ to $\infty e^{\ii 2\pi/3}$. Moreover, the point $-1$ lies between the two contours $\Gamma_{2,\LL}^\inn$ and $\Gamma_{2,\LL}^\out$. We similarly let 
\begin{equation}
\Gamma_{m,\RR}^{\inn},\cdots,\Gamma_{2,\RR}^{\inn},\Gamma_{2,\RR}^{\out},\cdots,\Gamma_{m,\RR}^\out
\end{equation}
be $2m-2$ contours, ordered from right to left, on the right half plane $\{v: \Re(v)>0\}$, each of which goes from $\infty e^{-\ii  \pi/5 }$ to $\infty e^{\ii  \pi/5}$. Moreover, the point $1$ lies between the two contours $\Gamma_{2,\RR}^\inn$ and $\Gamma_{2,\RR}^\out$. The symbols $\LL$ and $\RR$ appearing in the subscripts of these contours indicate whether the contour lies on the left half plane or the right half plane. The symbols $\inn$ and $\out$ appearing in the superscripts indicate the relative locations of these contours to infinity ($-\infty$ for the contours on the left half plane and $\infty$ for those on the right half plane). The angles of these contours are chosen to guarantee the convergence of the integrals along these contours in the definition. Especially, we would like to point out that when the times are strictly ordered $\tau_1<\tau_2<\cdots<\tau_m$, the angles of the right contours could be chosen to be the $\pm\pi/3$. However, to ensure the formula is still valid for the case of possible points with an equal time, we need to bend the contours on one side (the side depends on how one chooses the order of the spatial parameters). See \cite{Liu19} for more discussions on this issue.

See Figure \ref{fig:contours1} for an illustration of the contours when $m=3$.

\begin{figure}
    \centering
    \begin{tikzpicture}[scale=1.5, decoration={
  markings,
  mark=at position 0.75 with {\arrow{>}}}
  ] 
  \draw[-latex] (-3,0) -- (3,0) node[right] {$\Re$};
  \draw[-latex] (0,-2) -- (0,2) node[above] {$\Im$};

  \draw[postaction={decorate},blue,thick] plot[smooth] coordinates {(-2.5,-1) (-2,-0.5) (-1.5,0) (-2,0.5) (-2.5,1)};
  \draw[postaction={decorate},blue,thick] plot[smooth] coordinates {(-2,-1) (-1.5,-0.5) (-1,0) (-1.5,0.5) (-2,1)};
  \draw[postaction={decorate},blue,thick] plot[smooth] coordinates {(-1.5,-1) (-1,-0.5) (-0.5,0) (-1,0.5) (-1.5,1)};
  \draw[postaction={decorate},blue,thick] plot[smooth] coordinates {(-1,-1) (-0.5,-0.5) (-0.1,0) (-0.5,0.5) (-1,1)};

  \draw[postaction={decorate},red,thick] plot[smooth] coordinates {(1,-0.5) (0.5,-0.25) (0.1,0) (0.5,0.25) (1,0.5)};
  \draw[postaction={decorate},red,thick] plot[smooth] coordinates {(1.5,-0.5) (1,-0.25) (0.5,0) (1,0.25) (1.5,0.5)};
  \draw[postaction={decorate},red,thick] plot[smooth] coordinates {(2,-0.5) (1.5,-0.25) (1,0) (1.5,0.25) (2,0.5)};
  \draw[postaction={decorate},red,thick] plot[smooth] coordinates {(2.5,-0.5) (2,-0.25) (1.5,0) (2,0.25) (2.5,0.5)};

  \filldraw (-0.75,0) circle (0.5pt) node[below] {$-1$};
  \filldraw (0.75,0) circle (0.5pt) node[below] {$1$};

  \node at (-2.8,1.2) {$\Gamma_{3,\LL}^{\inn}$};
  \node at (-2.1,1.2) {$\Gamma_{2,\LL}^{\inn}$};
  \node at (-1.4,1.2) {$\Gamma_{2,\LL}^\out$};
  \node at (-0.7,1.2) {$\Gamma_{3,\LL}^\out$};

  \node at (2.8,0.6) {$\Gamma_{3,\RR}^{\inn}$};
  \node at (2.1,0.6) {$\Gamma_{2,\RR}^{\inn}$};
  \node at (1.4,0.6) {$\Gamma_{2,\RR}^\out$};
  \node at (0.7,0.6) {$\Gamma_{3,\RR}^\out$};
\end{tikzpicture}
    \caption{Illustration of the $\Gamma$-contours in the definition of $\FT$ when $m=3$.}
    \label{fig:contours1}
\end{figure}

We also introduce some notations below.

Assume $W=(w_1,\cdots,w_n)\in\complexC^n$ is a vector, we denote
\begin{equation}
aW+b= (aw_1+b,\cdots,aw_n+b)\in\complexC^n
\end{equation}
for any $a,b\in\complexC$. We also denote the concatenation of two vectors $W=(w_1,\cdots,w_n)\in\complexC^n$ and $W'=(w'_1,\cdots,w'_{n'})\in\complexC^{n'}$
\begin{equation}
W\bunion W':= (w_1,\cdots,w_n,w'_1,\cdots,w'_{n'}) \in \complexC^{n+n'}.
\end{equation}
Especially, when $n'=0$ we write $W\bunion\emptyset=W$, and when $n'=1$, we write, for $w'\in\complexC$,
\begin{equation}
W\bunion w'=(w_1,\cdots,w_n,w'),\quad \text{and}\quad 
w'\bunion W=(w',w_1,\cdots,w_n).
\end{equation}

If $W=(w_1,\cdots,w_n)\in \complexC^n$ and $\tilde W=(\tilde w_1,\cdots, \tilde w_n)\in \complexC^n$ satisfying $w_i\ne \tilde w_j$ for all $1\le i,j\le n$, denote the Cauchy determinant
\begin{equation}
\label{eq:def_rC}
\rC(W; \tilde W) = \det\left[ \frac{1}{w_i- \tilde w_j}\right]_{i,j=1}^n = (-1)^{n(n-1)/2}\frac{\prod_{1\le i<j\le n}(w_j-w_j)(\tilde w_j-\tilde w_i)}{\prod_{1\le i,j\le n}(w_i-\tilde w_j)}.
\end{equation}
Note that the dimensions of $W$ and $\tilde W$ need to match in the above Cauchy determinant. A simple calculation also implies
\begin{equation}
\label{eq:id_rC}
\begin{split}
\rC(W\bunion W';\tilde W\bunion\tilde W')
&= \det  
         \begin{bmatrix}
         & \vdots & &\vdots&\\
         \cdots& \frac{1}{w_i-\tilde w_j}& \cdots& \frac{1}{w_i -\tilde w'_{j'}} & \cdots\\
         & \vdots & &\vdots&\\
         \cdots& \frac{1}{w'_{i'}-\tilde w_j}& \cdots& \frac{1}{w'_{i'} -\tilde w'_{j'}} & \cdots\\
                  & \vdots & &\vdots&
         \end{bmatrix}_{\substack{1\le i,j\le n\\ 1\le i',j'\le n'}}\\
         &=  \rC(W;\tilde W)\rC(W';\tilde W')\cdot \prod_{i=1}^n \prod_{i'=1}^{n'} \frac{(w_i-w'_{i'})(\tilde w_i-\tilde w'_{i'})}{(w_i-\tilde w'_{i'}) (\tilde w_i- w'_{i'})}
\end{split}
\end{equation}
for any $W=(1_1,\cdots,w_n), \tilde W=(\tilde w_1,\cdots,\tilde w_n) \in \complexC^n$, $W'=(w'_1,\cdots,w'_{n'}), \tilde W'=(\tilde w'_1,\cdots,\tilde w'_{n'}) \in \complexC^{n'}$ satisfying $w_i\ne \tilde w_j$, $w'_{i'}\ne \tilde w_j$, $w_i\ne \tilde w'_{j'}$, and $w'_{i'}\ne \tilde w'_{j'}$ for all $1\le i, j\le n$ and $1\le i',j'\le n'$.

For the Cauchy determinant, we have a very simple inequality, which we state in Lemma \ref{lm:bounds_Cauchy_det} below. Before we state the inequality, we introduce a notation. For two sets $A,B\subset\complexC$, denote
\begin{equation}
\dist(A;B):=\inf\{|a-b|: a\in A, b\in B\}.
\end{equation}
For simplification we also use the same notation for the distance of the coordinates in two vectors
\begin{equation}
\dist(W;W')=\min \{|w_i-w'_{i'}|: 1\le i\le n, 1\le i'\le n'\}
\end{equation}
for any vectors $W=(w_1,\cdots,w_n)\in\complexC^n$ and $W'=(w'_1,\cdots,w'_{n'})\in \complexC^{n'}$. Note this is not the usual distance of two vectors. For example, this function $\dist(W,W')$ is invariant under permutations of the coordinates of $W$ or $W'$.

\begin{lm}
\label{lm:bounds_Cauchy_det}
Assume the two vectors $W=(w_1,\cdots,w_n), \tilde W =(\tilde w_1,\cdots,\tilde w_n)\in\complexC^n$ satisfy $\dist(W;\tilde W)>0$, then
\begin{equation}
|\rC(W;W')| \le n^{n/2}  \dist(W;\tilde W)^{-n}.
\end{equation}
If we further have $W'=(w'_1,\cdots,w'_{n'}), \tilde W'=(\tilde w'_1,\cdots,\tilde w'_{n'})\in \complexC^{n'}$ satisfying $\dist(W\bunion W';\tilde W\bunion \tilde W')>0$, then
\begin{equation}
|\rC(W\bunion W';\tilde W\bunion \tilde W')| \le n^{n/2}(n')^{n'/2} 2^{(n+n')/2}\dist(W\bunion W';\tilde W\bunion \tilde W')^{-n-n'}.
\end{equation}
\end{lm}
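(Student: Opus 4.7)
My plan is to prove both bounds by direct application of Hadamard's inequality (the absolute value of a determinant is bounded by the product of its column Euclidean norms), with only an elementary convexity step needed in the second case.

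For the first bound, I view $\rC(W;\tilde W)=\det\bigl[1/(w_i-\tilde w_j)\bigr]_{i,j=1}^{n}$ as an $n\times n$ determinant. Under the hypothesis $\dist(W;\tilde W)>0$, each entry satisfies $|1/(w_i-\tilde w_j)|\le \dist(W;\tilde W)^{-1}$, so every column has Euclidean norm at most $\sqrt{n}\,\dist(W;\tilde W)^{-1}$, and Hadamard immediately yields $|\rC(W;\tilde W)|\le n^{n/2}\,\dist(W;\tilde W)^{-n}$.

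For the second bound, the same argument applied to the combined $(n+n')\times(n+n')$ Cauchy matrix gives
\begin{equation*}
|\rC(W\bunion W';\tilde W\bunion \tilde W')|\le (n+n')^{(n+n')/2}\,d^{-(n+n')},
\end{equation*}
where $d:=\dist(W\bunion W';\tilde W\bunion \tilde W')$ and every column norm is controlled by $\sqrt{n+n'}\,d^{-1}$. To recast this as the stated form it then suffices to check the elementary inequality $(n+n')^{n+n'}\le 2^{n+n'}\,n^{n}(n')^{n'}$, which follows from Jensen's inequality applied to the convex function $f(x)=x\log x$ at the points $n,n'$: $\tfrac{1}{2}\bigl(n\log n+n'\log n'\bigr)\ge \tfrac{n+n'}{2}\log\tfrac{n+n'}{2}$, which upon exponentiating gives the required bound.

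I do not anticipate a real obstacle; the only mild temptation to be avoided is to start from the factorization identity \eqref{eq:id_rC}, which would require controlling the cross-term product whose numerators $(w_i-w'_{i'})$ and $(\tilde w_i-\tilde w'_{i'})$ are not bounded in any natural way by $d$. The direct Hadamard estimate on the full matrix is cleaner, and the factor of $2^{(n+n')/2}$ in the stated bound is simply the slack incurred by replacing the optimal Hadamard constant $(n+n')^{(n+n')/2}$ by the factorized quantity $n^{n/2}(n')^{n'/2}$, which is presumably more useful for the asymptotic estimates later in the paper.
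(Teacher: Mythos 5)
Your proposal is correct and follows the same route as the paper: Hadamard's inequality gives the first bound, and the second is obtained by applying it to the full $(n+n')\times(n+n')$ Cauchy matrix together with the convexity of $x\ln x$ (i.e., $\left(\frac{n+n'}{2}\right)^{(n+n')/2}\le n^{n/2}(n')^{n'/2}$), which is exactly how the paper extracts the factor $2^{(n+n')/2}$.
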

\begin{proof}[Proof of Lemma \ref{lm:bounds_Cauchy_det}]
The first inequality follows from the Hadamard's inequality
\begin{equation}
|\rC(W;W')|= \left|\det\left( \frac{1}{w_i-\tilde w_j}\right)_{i,j=1}^n\right| \le \prod_{j=1}^n \sqrt{\sum_{i=1}^n |w_i-\tilde w_j|^{-2}} \le \left( n \cdot \dist(W;\tilde W)^{-2}\right)^{n/2}.
\end{equation}
The second inequality follows from the first inequality and the following simple inequality
\begin{equation}
\left(\frac{n+n'}{2}\right)^{(n+n')/2} \le n^{n/2} (n')^{n'/2}
\end{equation}
since the function $x\ln x$ is a convex function on $(0,\infty)$.
\end{proof}

\bigskip

Finally, for fixed $\bbeta=(\beta_1,\cdots,\beta_m)$ and $(\alpha_\ell,\tau_\ell)$, $1\le \ell \le m$, we introduce the function
\begin{equation}
\label{eq:def_f}
f_\ell(w)=f_\ell(w;\bbeta;(\alpha_1,\tau_1),\cdots,(\alpha_m,\tau_m)):= 
\begin{dcases}
e^{-\frac{1}{3}\tau_{1}w^3 + \alpha_{1}w^2 + \beta_{1} w}, & \ell=1,\\
e^{-\frac{1}{3}(\tau_{\ell}-\tau_{\ell-1})w^3 + (\alpha_{\ell}-\alpha_{\ell-1})w^2 +(\beta_{\ell}-\beta_{\ell-1})w}, &2\le \ell \le m.
\end{dcases}
\end{equation}
It is direct to see that if $\ell\ge 2$ and $(\alpha_{\ell-1},\tau_{\ell-1}) \prec (\alpha_{\ell},\tau_{\ell})$, then $f_\ell(w)$ decays super-exponentially fast to $0$ as $w\to\infty$ along the directions $e^{\pm \ii 2\pi/3}$. In fact, the real part of the exponent of $f_\ell$ is approximately $-\frac13(\tau_{\ell}-\tau_{\ell-1}) \Re(w^3)$ if $\tau_{\ell-1}<\tau_{\ell}$, or $(\alpha_{\ell}-\alpha_{\ell-1}) \Re(w^2)$ if $\tau_{\ell-1}=\tau_{\ell}$ and $\alpha_{\ell-1}<\alpha_{\ell}$ when $w$ grows to infinity along the directions $e^{\pm \ii 2\pi/3}$. In both cases, the real part goes to $-\infty$ (at a rate of $|w|^3$ or $|w|^2$). Thus $f_\ell(w)$ decays super-exponentially fast to $0$ along these two directions. Similarly, when $\ell\ge 2$ the function $f_\ell(w)$ grows super-exponentially fast to $\infty$ as $w\to\infty$ along the directions $e^{\pm \ii  \pi/5}$, i.e., $1/f_\ell(w)$ decays super-exponentially fast to $0$ along the directions $e^{\pm \ii  \pi/5}$.

\bigskip
Now we are ready to define $\FT(\bbeta;(\alpha_1,\tau_1),\cdots,(\alpha_m,\tau_m))$.

\begin{defn}[Definition of $\FT$]
\label{defn:FT}

Assume that $\bbeta=(\beta_1,\cdots,\beta_m)\in\realR^m$, and the $m$ points $(\alpha_\ell,\tau_\ell)\in\realR^2$, $1\le \ell\le m$, satisfy $(\alpha_1,\tau_1)\prec \cdots \prec (\alpha_m,\tau_m)$. $\bz=(z_1,\cdots,z_{m-1})$ is a vector in $(\complexC\setminus\{0,1\})^{m-1}$ if $m\ge 2$.
\begin{enumerate}[(i)]
\item If $m=1$, we define
\begin{equation}
\label{eq:def_FT_1}
\FT(\beta_1;(\alpha_1,\tau_1)) = e^{\frac23\tau_1-2\beta_1}.
\end{equation}
\item If $m\ge 2$, we define
\begin{equation}
    \begin{split}
\label{eq:def_FT_2}
&\FT(\bbeta;(\alpha_1,\tau_1),\cdots,(\alpha_m,\tau_m))\\
& = (-1)^m \oint_{>1}\cdots \oint_{>1} \sum_{\substack{n_\ell \ge 1\\ 2\le \ell\le m}} \frac{1}{ (n_2!\cdots n_{m-1}!)^2} \cD_{\bn}(\bbeta;\bz) \prod_{\ell=1}^{m-1}\frac{\rd z_\ell}{2\pi\ii z_\ell(1-z_\ell)}
    \end{split}
\end{equation}
where $\bn=(n_1=1,n_2,\cdots,n_m)$, $\bz=(z_1,\cdots,z_{m-1})$, $\oint_{>1}$ denotes the integral around a counterclockwise oriented circle with radius larger than $1$ and centered at the origin, 
\begin{equation}
\label{eq:def_cdnz}
        \begin{split}
             \cD_{\bn}(\bbeta;\bz)&= \cD_{\bn}(\bbeta;\bz;(\alpha_1,\tau_1),\cdots,(\alpha_m,\tau_m))\\
             &=2\prod_{\ell=1}^{m-1}(1-z_{\ell})^{n_{\ell}}(1-z_{\ell}^{-1})^{n_{\ell+1}}
             \cdot \prod_{\ell=2}^{m} \prod_{i_\ell=1}^{n_\ell} \left(\frac{1}{1-z_{\ell-1}}\int_{\Gamma_{\ell,\LL}^\inn}\ddbar{u_{i_\ell}^{(\ell)}}{}-\frac{z_{\ell-1}}{1-z_{\ell-1}}\int_{\Gamma_{\ell,\LL}^\out}\ddbar{u_{i_\ell}^{(\ell)}}{}\right)\\
             &\quad \cdot \prod_{\ell=2}^{m} \prod_{i_\ell=1}^{n_\ell} \left(\frac{1}{1-z_{\ell-1}}\int_{\Gamma_{\ell,\RR}^\inn}\ddbar{v_{i_\ell}^{(\ell)}}{}-\frac{z_{\ell-1}}{1-z_{\ell-1}}\int_{\Gamma_{\ell,\RR}^\out}\ddbar{v_{i_\ell}^{(\ell)}}{}\right) \prod_{\ell=2}^{m} \prod_{i_\ell=1}^{n_\ell} \frac{f_{\ell}(u_{i_\ell}^{(\ell)})}{f_{\ell}(v_{i_\ell}^{(\ell)})} \cdot \frac{f_1(-1)}{f_1(1)}\\
        & \quad \cdot \rC(-1\bunion V^{(2)}; 1\bunion U^{(2)}) 
        \cdot \prod_{\ell=2}^{m-1} \rC(U^{(\ell)}\bunion V^{(\ell+1)}; V^{(\ell)}\bunion U^{(\ell+1)})
        \cdot \rC(U^{(m)};V^{(m)})
        \end{split}
    \end{equation}
    and the vectors $U^{(\ell)}=(u_{1}^{(\ell)},\cdots,u_{n_\ell}^{(\ell)})$, $V^{(\ell)}=(v_{1}^{(\ell)},\cdots,v_{n_\ell}^{(\ell)})$ for each $2\le \ell \le m$. The functions $f_\ell$, $1\le \ell\le m$, are defined in \eqref{eq:def_f}.
\end{enumerate}
\end{defn}

We first note that the case when $m=1$, the function $\FT(\beta_1;(\alpha_1,\tau_1))$ equals to $-2\rC(-1;1)f_1(-1)/f_1(1)$ which can be viewed as the degenerated form of \eqref{eq:def_FT_2} if we set all the empty product to be $1$ and set $U^{(2)}=V^{(2)}=\emptyset$ in \eqref{eq:def_cdnz}.

It is also easy to see that  when $m\ge 2$, all the terms $\cD_{\bn}(\bbeta;\bz)$ are well defined since the integrand decays super-exponentially fast along the integration contours due to the factors of the $f_\ell$ functions, see the discussions after \eqref{eq:def_f}. 

We also need to verify that the integrals and summations in \eqref{eq:def_FT_2} are well defined. In fact, note that \eqref{eq:def_cdnz} is independent of the specific choices of the $\Gamma$-contours (as long as they satisfy the nesting assumption and have the desired angles approaching to infinity described at the beginning of this subsection). We could select these integral contours such that the distance between any two contours and the distance between any contour to $\{1,-1\}$ are at least $d>0$. Then we apply Lemma \ref{lm:bounds_Cauchy_det} and obtain
\begin{equation}
    \begin{split}
        & \left|\cD_{\bn}(\bz;\bbeta)\right|\\
        & \le 2 \left| \frac{f_1(-1)}{f_1(1)}\right| \cdot \prod_{\ell=2}^{m} \frac{(1+|z_{\ell-1}|)^{2n_\ell}}{|z_{\ell-1}|^{n_\ell} |1-z_{\ell-1}|^{n_\ell -n_{\ell-1}}} \cdot 2^{\frac12+\sum_{\ell=2}^{m}n_\ell} \cdot \frac{\prod_{\ell=2}^{m} n_\ell^{n_\ell}}{d^{1+2\sum_{\ell=2}^{m}} n_\ell}\\
        &\quad  \cdot \prod_{\ell=2}^{m} \left| \int_{\Gamma_{\ell,\LL}^\inn\cup \Gamma_{\ell,\LL}^\out} |f_{\ell+1}(u)| \frac{|\rd u|}{2\pi} \int_{\Gamma_{\ell,\RR}^\inn\cup \Gamma_{\ell,\RR}^\out} \frac1{|f_{\ell+1}(v)|} \frac{|\rd v|}{2\pi}\right|^{n_\ell}
        \\
        &\le \prod_{\ell=2}^{m} \frac{(1+|z_{\ell-1}|)^{2n_\ell}}{|z_{\ell-1}|^{n_\ell} |1-z_{\ell-1}|^{n_\ell -n_{\ell-1}}}  C^{\sum_{\ell=2}^{m}n_\ell} \prod_{\ell=2}^{m}n_\ell^{n_\ell}
    \end{split}
\end{equation}
when $m\ge 2$, where $C$ is a constant independent of $\bn$ and $\bz$, and we used the fact that the functions $f_\ell(w)$ decays super-exponentially fast along the left contours and grows super-exponentially fast along the right contours as $w\to\infty$ and hence each integral in the middle of the above inequality is finite. Thus, we see that the right hand sides of \eqref{eq:def_FT_2} is absolutely convergent, and the function $\FT$ is well defined. 

\bigskip

Finally, we remark that the $z_1$ integral in \eqref{eq:def_FT_2} actually can be evaluated explicitly. In fact, noting \eqref{eq:def_cdnz}, the $z_1$ integral is (after we suppress other terms independent of $z_1$ in the $\cdots$ part)
\begin{equation}
    \begin{split}
    &\oint_{>1} (1-z_1)(1-z_1^{-1})^{n_2} \prod_{i_2=1}^{n_2} \left(\frac{1}{1-z_{1}}\int_{\Gamma_{2,\LL}^\inn}\ddbar{u_{i_2}^{(2)}}{}-\frac{z_{1}}{1-z_{1}}\int_{\Gamma_{2,\LL}^\out}\ddbar{u_{i_2}^{(2)}}{}\right)\\
    &\quad \prod_{i_2=1}^{n_2}
    \left(\frac{1}{1-z_{1}}\int_{\Gamma_{2,\RR}^\inn}\ddbar{u_{i_2}^{(2)}}{}-\frac{z_{1}}{1-z_{1}}\int_{\Gamma_{2,\RR}^\out}\ddbar{v_{i_2}^{(2)}}{}\right) \cdots \frac{\rd z_1}{2\pi \ii z_1(1-z_1)}.
\end{split}
\end{equation}
If we deform the $z_1$ contour to infinity, we see that any term involving $\int_{\Gamma_{2,\LL}^\inn}$ or $\int_{\Gamma_{2,\RR}^\inn}$ vanishes, and we end with, after integrating $z_1$,
\begin{equation}
    \prod_{i_2=1}^{n_2} \int_{\Gamma_{2,\LL}^\out}\ddbar{u_{i_2}^{(2)}}{}\int_{\Gamma_{2,\RR}^\out}\ddbar{v_{i_2}^{(2)}}{} \cdots
\end{equation}
Therefore, we obtain the following result.
\begin{prop}
    \label{prop:FT_alt}
    When $m\ge 2$, we have the following formula for $\FT(\bbeta;(\alpha_1,\tau_1),\cdots,(\alpha_m,\tau_m))$.
\begin{equation}
    \begin{split}
\label{eq:def_FT_alt}
&\FT(\bbeta;(\alpha_1,\tau_1),\cdots,(\alpha_m,\tau_m))
 = (-1)^m \oint_{>1}\cdots \oint_{>1} \sum_{\substack{n_\ell \ge 1\\ 2\le \ell\le m}} \frac{1}{ (n_2!\cdots n_{m-1}!)^2} \tilde\cD_{\bn}(\bbeta;\tilde\bz) \prod_{\ell=2}^{m-1}\frac{\rd z_\ell}{2\pi\ii z_\ell(1-z_\ell)},
    \end{split}
\end{equation}
where $\tilde\bz=(z_2,\cdots,z_{m-1})$, and
\begin{equation}
    \label{eq:def_cdnz_alt}
            \begin{split}
                 &\tilde\cD_{\bn}(\bbeta;\tilde\bz)\\
                 &=2\prod_{\ell=2}^{m-1}(1-z_{\ell})^{n_{\ell}}(1-z_{\ell}^{-1})^{n_{\ell+1}}
                 \cdot \prod_{\ell=3}^{m} \prod_{i_\ell=1}^{n_\ell} \left(\frac{1}{1-z_{\ell-1}}\int_{\Gamma_{\ell,\LL}^\inn}\ddbar{u_{i_\ell}^{(\ell)}}{}-\frac{z_{\ell-1}}{1-z_{\ell-1}}\int_{\Gamma_{\ell,\LL}^\out}\ddbar{u_{i_\ell}^{(\ell)}}{}\right)\prod_{i_2=1}^{n_2}\int_{\Gamma_{2,\LL}^{\out}}\ddbar{u_{i_2}^{(2)}}{}\\
                 &\quad  \prod_{\ell=3}^{m} \prod_{i_\ell=1}^{n_\ell} \left(\frac{1}{1-z_{\ell-1}}\int_{\Gamma_{\ell,\RR}^\inn}\ddbar{v_{i_\ell}^{(\ell)}}{}-\frac{z_{\ell-1}}{1-z_{\ell-1}}\int_{\Gamma_{\ell,\RR}^\out}\ddbar{v_{i_\ell}^{(\ell)}}{}\right)\prod_{i_2=1}^{n_2}\int_{\Gamma_{2,\RR}^{\out}}\ddbar{v_{i_2}^{(2)}}{}\cdot \prod_{\ell=2}^{m} \prod_{i_\ell=1}^{n_\ell} \frac{f_{\ell}(u_{i_\ell}^{(\ell)})}{f_{\ell}(v_{i_\ell}^{(\ell)})} \cdot \frac{f_1(-1)}{f_1(1)}\\ 
            & \quad \cdot \rC(-1\bunion V^{(2)}; 1\bunion U^{(2)}) 
            \cdot \prod_{\ell=2}^{m-1} \rC(U^{(\ell)}\bunion V^{(\ell+1)}; V^{(\ell)}\bunion U^{(\ell+1)})
            \cdot \rC(U^{(m)};V^{(m)}).
            \end{split}
\end{equation}
\end{prop}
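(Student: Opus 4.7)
The plan is to evaluate the $z_1$ contour integral in the representation \eqref{eq:def_FT_2} explicitly; this reduces to a single residue-at-infinity computation once the two level-$\ell=2$ operator products are expanded by linearity into $2^{2n_2}$ fixed ``in''/``out'' choices. The absolute convergence bound established just above the statement of the proposition licenses interchanging the $z_1$-integral with the summation over $\bn$ and with the remaining $z_j$- and $u,v$-integrals, so all of these manipulations are legitimate.

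First I would isolate the $z_1$-dependence in $\cD_{\bn}(\bbeta;\bz)$. Only three sources contribute: the first-product term $(1-z_1)^{n_1}(1-z_1^{-1})^{n_2}=(1-z_1)(1-z_1^{-1})^{n_2}$ (using $n_1=1$); the measure $\rd z_1/(2\pi\ii z_1(1-z_1))$; and, for a fixed expansion in which $k_u$ (resp.\ $k_v$) copies of the level-$\ell=2$ operators select the outer contour on the left (resp.\ right), the combinatorial weight $(-z_1)^{k_u+k_v}(1-z_1)^{-2n_2}$. Using the identity $(1-z_1^{-1})^{n_2}=(-1)^{n_2}(1-z_1)^{n_2}z_1^{-n_2}$, these combine into the rational function
\begin{equation}
\frac{(-1)^{n_2+k_u+k_v}\,z_1^{\,k_u+k_v-n_2-1}}{(1-z_1)^{n_2}},
\end{equation}
whose only singularities are at $0$ and $1$; every other factor of the summand (the Cauchy determinants, the $f_\ell$ ratios, the $u,v$ integrals, and the higher $z_j$'s) is independent of $z_1$.

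The contour integral of this rational function over $|z_1|>1$ against $\rd z_1/(2\pi\ii)$ equals the coefficient of $z_1^{-1}$ in its Laurent expansion at infinity. Expanding $(1-z_1)^{-n_2}=(-z_1)^{-n_2}\sum_{k\ge 0}\binom{n_2+k-1}{k}z_1^{-k}$ and matching powers, the $z_1^{-1}$ coefficient vanishes unless $k=0$ and $k_u+k_v=2n_2$; since $k_u,k_v\le n_2$ this forces the ``all out'' choice $k_u=k_v=n_2$, with coefficient $(-1)^{2n_2}=1$. Substituting back into \eqref{eq:def_FT_2} therefore deletes the $\ell=1$ factor in the first product and collapses the two level-$\ell=2$ operator products into $\prod_{i_2=1}^{n_2}\int_{\Gamma_{2,\LL}^{\out}}\frac{\rd u_{i_2}^{(2)}}{2\pi\ii}\int_{\Gamma_{2,\RR}^{\out}}\frac{\rd v_{i_2}^{(2)}}{2\pi\ii}$, reproducing exactly $\tilde\cD_{\bn}(\bbeta;\tilde\bz)$ in \eqref{eq:def_cdnz_alt} and hence \eqref{eq:def_FT_alt}. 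I do not anticipate any serious obstacle; the only mildly delicate part is the sign and binomial bookkeeping needed to single out the unique surviving contour configuration, and this is transparent once the expansion is written out.
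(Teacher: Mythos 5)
Your argument is correct and follows essentially the same route as the paper: the paper also evaluates the $z_1$ integral by pushing the contour to infinity, observing that every term containing an $\Gamma_{2,\LL}^\inn$ or $\Gamma_{2,\RR}^\inn$ factor vanishes and only the all-out configuration survives with coefficient $1$. Your explicit Laurent-coefficient/binomial bookkeeping and the appeal to the absolute-convergence bound for interchanging the $z_1$ integral with the sum are just more detailed versions of the same residue-at-infinity computation.
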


We could have used this formula as the definition of $\FT$ function. The reason we prefer the formula \eqref{eq:def_FT_2} in the definition is that the structure of \eqref{eq:def_FT_2} is more symmetric and aligns better with the pre-limit formula of the KPZ fixed point.

\subsection{Definition of the upper tail field $\cH$}
\label{sec:def_cH}

Now we are ready to define the upper tail field $\cH$ using the Kolmogorov extension theorem and its joint tail probability functions. We denote
\begin{equation}
    \label{eq:def_hat_FT}
    \hat\FT(\beta_1,\cdots,\beta_m; (\alpha_1,\tau_1),\cdots,(\alpha_m,\tau_m))= \prob\left( \bigcap_{\ell=1}^m \left\{\cH(\alpha_\ell,\tau_\ell) \ge \beta_\ell \right\} \right)
\end{equation}
for any $m\ge 1$, and $m$ distinct points $(\alpha_1,\tau_1),\cdots,(\alpha_m,\tau_m)\in\realR^2$ and $\beta_1,\cdots,\beta_m\in\realR$. The function $\hat\FT$ is explicitly given below.

\begin{defn}
    \label{def:cH_by_tail}
Let $m\ge 1$. Assume $(\alpha_1,\tau_1),\cdots,(\alpha_m,\tau_m)$ are $m$ distinct points on $\realR^2$, and $\beta_1,\cdots,\beta_m\in\realR$. 
\begin{enumerate}[(i)]
    \item If $(\alpha_1,\tau_1)\prec (\alpha_2,\tau_2)\prec\cdots \prec (\alpha_m,\tau_m)$, and $(\alpha_k,\tau_k)=(0,0)$ for some $1\le k\le m$, then
        \begin{equation}
            \begin{split}
            &\hat\FT(\beta_1,\cdots,\beta_m; (\alpha_1,\tau_1),\cdots,(\alpha_m,\tau_m))\\
            &=\FT(\beta_1,\cdots,\beta_{k-1},\max\{\beta_k,0\},\beta_{k+1},\cdots,\beta_m; (\alpha_1,\tau_1),\cdots,(\alpha_m,\tau_m)).
            \end{split}
        \end{equation}
    \item If $(\alpha_1,\tau_1)\prec \cdots \prec (\alpha_{k-1},\tau_{k-1})\prec(0,0)\prec (\alpha_{k},\tau_{k})\prec\cdots \prec (\alpha_m,\tau_m)$ for some $1\le k\le m+1$, then
        \begin{equation}
            \begin{split}
            &\hat\FT(\beta_1,\cdots,\beta_m; (\alpha_1,\tau_1),\cdots,(\alpha_m,\tau_m))\\
            &=\FT(\beta_1,\cdots,\beta_{k-1},0,\beta_{k},\cdots,\beta_m; (\alpha_1,\tau_1),\cdots,(\alpha_{k-1},\tau_{k-1}),(0,0),(\alpha_k,\tau_k),\cdots,(\alpha_m,\tau_m)).
            \end{split}
        \end{equation}
        \item More generally, assume that the points $(\alpha_1,\tau_1),\cdots,(\alpha_m,\tau_m)$ are not necessarily ordered, and $\sigma\in S_m$ is the unique permutation such that $(\alpha_{\sigma_1},\tau_{\sigma_1})\prec \cdots \prec (\alpha_{\sigma_m},\tau_{\sigma_m})$, then
         \begin{equation}
            \hat\FT(\beta_1,\cdots,\beta_m; (\alpha_1,\tau_1),\cdots,(\alpha_m,\tau_m))=\hat\FT(\beta_{\sigma_1},\cdots,\beta_{\sigma_m}; (\alpha_{\sigma_1},\tau_{\sigma_1}),\cdots,(\alpha_{\sigma_m},\tau_{\sigma_m}))
         \end{equation} 
         where the right hand side is defined in the previous two cases.
 \end{enumerate}
\end{defn}

Recall that $\prec$ is a total order defined in Definition \ref{defn:prec}. Thus, Definition \ref{def:cH_by_tail} gives the joint tail probability functions of $\cH$ for arbitrarily ordered points. We also note a special case when $m=1$ and $(\alpha_1,\tau_1)=(0,0)$, we have (recalling Definition \ref{defn:FT})
\begin{equation}
    \label{eq:cH_00}
    \hat\FT(\beta;(0,0))=\prob\left( \cH(0,0)\ge \beta\right) = e^{-2\max\{\beta,0\}}.
\end{equation}
Moreover, $\hat\FT$ has the following property by the equation \eqref{eq:aux_12_31_01} which will be proved later,
\begin{equation}
\label{eq:aux_12_31_02}
\hat\FT(\beta;(\alpha,\tau)) = e^{\frac{2}{3}\tau -2\beta}\hat\FT(-\beta;(-\alpha,-\tau)),\quad \text{ if } (0,0)\prec (\alpha,\tau).
\end{equation}

We still need to verify that \eqref{eq:def_hat_FT} actually defines a random field by checking the joint tail probability functions satisfy the consistency conditions so that the Kolmogorov extension theorem applies. Note that the Kolmogorov extension theorem works for the joint tail probability functions in the same way as for the joint cumulative distribution functions, since the joint tail probability functions of the field $\cH$ are the same as the joint cumulative distribution functions of $-\cH$. The consistency of the joint probability tail functions is proved in the following proposition. We note that the proof relies on Proposition \ref{prop:main} and the first part of Theorem \ref{thm:main}. They imply (also see \eqref{eq:need_to_show})
\begin{equation}
    \label{eq:cond_limit}
    \lim_{L\to\infty} \prob \left( \bigcap_{\ell=1}^m \left\{ \HH(\alpha_\ell,\tau_\ell)\ge \beta_\ell \right\} \mid \HH(0,0)\ge 0 \right) = \hat\FT(\beta_1,\cdots,\beta_m;(\alpha_1,\tau_1),\cdots,(\alpha_m,\tau_m))
\end{equation}
    for any $\beta_k\in\realR$, $1\le k\le m$, as long as $(\alpha_k,\tau_k)$, $1\le k\le m$, are distinct points on $\realR^2$, where $\HH$ is the rescaled KPZ fixed point $\HH(\alpha,\tau):= \sqrt{L} \left(\rH(\alpha L^{-1}, 1+ \tau  L^{-3/2}) - L\right)$
    as defined in \eqref{eq:def_HH}. We emphasize that the proofs of Proposition \ref{prop:main} and the first part of Theorem \ref{thm:main}, including \eqref{eq:cond_limit}, does not depend on the existence of the random field $\cH$ but only uses the explicit formulas of $\HH$ and $\FT$.

\begin{prop}
    \label{prop:consistency}
    The function $\hat\FT$ has the following properties.
    \begin{enumerate}[(a)]
    \item $\hat\FT(\beta;(\alpha,\tau))$ is a tail probability function for each $(\alpha,\tau)\in\realR^2$, i.e., 
    \begin{equation}
    \lim_{\beta\to-\infty}\hat\FT(\beta;(\alpha,\tau))=1,\quad \lim_{\beta\to\infty}\hat\FT(\beta;(\alpha,\tau))=0.
    \end{equation}
    \item For $m\ge 2$, we have
    \begin{equation}
    \label{eq:consistency_right}
    \lim_{\beta_k\to \infty}\hat\FT(\beta_1,\cdots,\beta_m;(\alpha_1,\tau_1),\cdots,(\alpha_m,\tau_m))=0
    \end{equation}
    and
    \begin{equation}
    \label{eq:consistency_left}
    \begin{split}
    &\lim_{\beta_k\to-\infty}\hat\FT(\beta_1,\cdots,\beta_m;(\alpha_1,\tau_1),\cdots,(\alpha_m,\tau_m))\\
    &= \hat\FT(\beta_1,\cdots,\beta_{k-1},\beta_{k+1},\cdots,\beta_m;(\alpha_1,\tau_1),\cdots,(\alpha_{k-1},\tau_{k-1}),(\alpha_{k+1},\tau_{k+1}),\cdots,(\alpha_m,\tau_m))
    \end{split}
    \end{equation}
    for all $1\le k\le m$.
    \end{enumerate}
    \end{prop}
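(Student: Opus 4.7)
My plan is to exploit the representation \eqref{eq:cond_limit}, which identifies $\hat\FT$ with the $L\to\infty$ limit of conditional multipoint tail probabilities of the rescaled KPZ fixed point $\HH$. For each finite $L$ these pre-limit conditional probabilities are genuine tail functions, so many consistency properties are immediate at the pre-limit level and need only be transferred through the $L\to\infty$ limit. The two inputs used are Proposition \ref{prop:main} (convergence of multipoint joint tails) and Proposition \ref{prop:FT_tail} (the tail estimate of $\FT$).

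The right limits \eqref{eq:consistency_right} and the $\beta\to\infty$ half of part (a) follow directly from Proposition \ref{prop:FT_tail} via Definition \ref{def:cH_by_tail}: when $(\alpha_k,\tau_k)\neq(0,0)$, the decay of $\FT$ in each $\beta_k$-direction immediately transfers to $\hat\FT$; and when $(\alpha_k,\tau_k)=(0,0)$, the substitution $\max\{\beta_k,0\}$ together with \eqref{eq:cH_00} handles the limit.

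For the left limit in part (a) with $(\alpha,\tau)\neq(0,0)$, assume $(0,0)\prec(\alpha,\tau)$ (the reverse order is symmetric). Combining \eqref{eq:cond_limit} and the one-point case of Proposition \ref{prop:main} yields the identity
\begin{equation*}
1-\hat\FT(\beta;(\alpha,\tau)) = \lim_{L\to\infty} 16\pi L^{3/2}e^{\frac{4}{3}L^{3/2}} \prob\bigl(\HH(0,0)\geq 0,\ \HH(\alpha,\tau)<\beta\bigr).
\end{equation*}
The left-hand side lies in $[0,1]$ and is non-increasing as $\beta$ decreases, so a limit exists as $\beta\to-\infty$; to show it is $0$, one needs a uniform-in-large-$L$ estimate on the pre-limit joint probability that vanishes as $\beta\to-\infty$. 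This is the central technical step: it is obtained by combining Proposition \ref{prop:main} at $m=2$ on compact $\beta$-windows with Proposition \ref{prop:FT_tail} on the tails, together with the a priori upper bound $\prob(\HH(0,0)\geq 0,\,\HH(\alpha,\tau)<\beta)\leq\prob(\HH(0,0)-\HH(\alpha,\tau)>-\beta)$ for $\beta<0$, which yields decay in $|\beta|$ uniform in $L$ via the Brownian-type spatial regularity of the pre-limit KPZ field. The case $(\alpha,\tau)=(0,0)$ is immediate from $\hat\FT(\beta;(0,0))=e^{-2\max\{\beta,0\}}$.

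For part (b), the left-limit claim \eqref{eq:consistency_left} reduces to part (a). Setting $P_L(\beta_k)=\prob(\bigcap_\ell\{\HH(\alpha_\ell,\tau_\ell)\geq\beta_\ell\}\mid\HH(0,0)\geq 0)$ and $P_L^{(k-)}$ the analogous probability with the $k$-th constraint removed, monotonicity and \eqref{eq:cond_limit} give $\lim_{\beta_k\to-\infty}\hat\FT\leq \hat\FT^{(k-\mathrm{removed})}$. The reverse bound follows from the elementary inequality
\begin{equation*}
P_L^{(k-)}-P_L(\beta_k)\leq 1-\prob(\HH(\alpha_k,\tau_k)\geq\beta_k\mid\HH(0,0)\geq 0)
\end{equation*}
by passing $L\to\infty$ via \eqref{eq:cond_limit}: this gives $\hat\FT^{(k-\mathrm{removed})}-\hat\FT\leq 1-\hat\FT(\beta_k;(\alpha_k,\tau_k))$, and the right-hand side tends to $0$ as $\beta_k\to-\infty$ by part (a). The main obstacle in the whole proof is therefore the uniform-in-$L$ control for the one-point left limit in paragraph three; the rest is bookkeeping.
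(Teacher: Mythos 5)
Your part (b) and the $\beta\to+\infty$ limits are essentially sound and close to the paper (though note that for the right limits the paper does not need Proposition \ref{prop:FT_tail} at all: it uses the monotonicity bound $\hat\FT(\beta_1,\cdots,\beta_m;\cdots)\le\hat\FT(\beta_k;(\alpha_k,\tau_k))$ coming from \eqref{eq:cond_limit} together with $\hat\FT(\beta;(\alpha,\tau))\le e^{\frac23\tau-2\beta}$ from \eqref{eq:tail}; Proposition \ref{prop:FT_tail} is only a two-point statement for $\tau>0$ and does not cover general $m$ nor $\tau=0$, $\alpha\ne0$, so your claim that the right limits ``follow directly'' from it is not accurate, though easily repaired). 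The genuine gap is in the central step you flag yourself, the left limit $\lim_{\beta\to-\infty}\hat\FT(\beta;(\alpha,\tau))=1$. Your proposed uniform-in-$L$ input, $\prob(\HH(0,0)\ge0,\,\HH(\alpha,\tau)<\beta)\le\prob(\HH(0,0)-\HH(\alpha,\tau)>-\beta)$, cannot do the job: the right-hand side discards the rare event $\{\HH(0,0)\ge0\}$ and is an order-one quantity as $L\to\infty$ for each fixed $\beta$, whereas your identity requires controlling $16\pi L^{3/2}e^{\frac43L^{3/2}}\prob(\HH(0,0)\ge0,\HH(\alpha,\tau)<\beta)$; multiplying your bound by $16\pi L^{3/2}e^{\frac43L^{3/2}}$ diverges, so no conclusion follows. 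What is actually needed is a bound on the \emph{conditional} probability given the rare event that is small in $\beta$ uniformly in $L$; for $\tau>0$ the paper obtains exactly this from the directed landscape decomposition $\mathcal{L}(0,0;x,t)\ge\mathcal{L}(0,0;y,s)+\mathcal{L}(y,s;x,t)$, independence of the two pieces, and the one-point GUE law, giving $\prob(\HH(\alpha,\tau)<\beta\mid\HH(0,0)\ge0)\le\FGUE(\beta/\tau^{1/3}+\alpha^2/\tau^{4/3})$ (see \eqref{eq:aux_11_30}); ``Brownian-type spatial regularity of the pre-limit field'' is not a usable substitute here.

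The second problem is the phrase ``the reverse order is symmetric.'' There is no symmetry transferring the $\beta\to-\infty$ limit from $(0,0)\prec(\alpha,\tau)$ to $(\alpha,\tau)\prec(0,0)$: the only relation is \eqref{eq:aux_12_31_02}, $\hat\FT(\beta;(\alpha,\tau))=e^{\frac23\tau-2\beta}\hat\FT(-\beta;(-\alpha,-\tau))$, which converts the left tail of a negative-time point into the \emph{right} tail of a positive-time point multiplied by an exponentially large factor. To conclude the limit is $1$ one must show that the leading term $e^{\frac23\tau-2\beta}$ in the right tail cancels to super-exponential precision, which is precisely the content of Proposition \ref{prop:FT_tail}, equation \eqref{eq:FT_tail2}; this is the paper's Case 3 and is an analytic computation, not a symmetry. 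Finally, the equal-time case $\tau=0$, $\alpha\ne0$ falls through both of your branches: the directed landscape inequality needs strictly increasing times, and Proposition \ref{prop:FT_tail} does not apply at $\tau=0$; the paper handles it by a separate two-step argument (its Case 4) routing through an auxiliary point at time $-1$, the already-established negative-time limit, and the left tail of $\FGUE$. As written, your proof of part (a)'s left limit — and hence of part (b), which you correctly reduce to it — is incomplete.
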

    
    \begin{proof}[Proof of Proposition \ref{prop:consistency}]
    
    We first prove (b) using (a), then prove (a).
    
    Assume Proposition \ref{prop:consistency} (a) holds. Note that \eqref{eq:cond_limit} implies
    \begin{equation}
    \hat\FT(\beta_1,\cdots,\beta_m;(\alpha_1,\tau_1),\cdots,(\alpha_m,\tau_m))\le \hat\FT(\beta_k;(\alpha_k,\tau_k)).
    \end{equation}
    Combining with (a) we immediately obtain \eqref{eq:consistency_right}.
    
    On the other hand, \eqref{eq:cond_limit} also implies
    \begin{equation}
    \begin{split}
    0\le &\hat\FT(\beta_1,\cdots,\beta_{k-1},\beta_{k+1},\cdots,\beta_m;(\alpha_1,\tau_1),\cdots,(\alpha_{k-1},\tau_{k-1}),(\alpha_{k-1},\tau_{k-1}),\cdots,(\alpha_m,\tau_m))\\
    &-\hat\FT(\beta_1,\cdots,\beta_m;(\alpha_1,\tau_1),\cdots,(\alpha_m,\tau_m))\\
    =&\lim_{L\to\infty} \prob \left( \bigcap_{\ell\ne k} \left\{ \HH(\alpha_\ell,\tau_\ell)\ge \beta_\ell\right\} \bigcap \left\{\HH(\alpha_k,\tau_k)< \beta_k\right\} \mid \HH(0,0)\ge 0\right)\\
    \le& \lim_{L\to\infty} \prob \left( \HH(\alpha_k,\tau_k)< \beta_k\mid \HH(0,0)\ge 0\right)\\
    =&1-\hat\FT(\beta_k;(\alpha_k,\tau_k)).
    \end{split}
    \end{equation}
    Combining with (a) we have \eqref{eq:consistency_left}.

    \bigskip
    
    It remains to show Proposition \ref{prop:consistency} (a). The large $\beta$ limit is easier. We use \eqref{eq:cond_limit} and obtain
    \begin{equation}
    \hat\FT(\beta;(\alpha,\tau))= \lim_{L\to\infty} \prob \left(\HH(\alpha,\tau)\ge \beta\mid \HH(0,0)\ge 0\right) 
    \le \lim_{L\to\infty} \frac{\prob\left(\HH(\alpha,\tau)\ge \beta\right)}{\prob\left(\HH(0,0)\ge 0\right)}
    =e^{\frac{2}{3}\tau -2\beta}
    \end{equation}
    where we used the asymptotics \eqref{eq:tail} in the last equation. The above inequality implies that
    \begin{equation}
    \lim_{\beta\to\infty}\hat\FT(\beta;(\alpha,\tau))=0.
    \end{equation}
    
    The other limit when $\beta\to-\infty$ is more complicated. We consider four different cases.
    
    Case 1: $\alpha=\tau=0$. In this case by \eqref{eq:cH_00} we have $\hat\FT(\beta;(0,0))=e^{-2\max\{\beta,0\}}$ which is $1$ when $\beta<0$.
    
    Case 2: $\tau>0$. We need an inequality in the notation of the directed landscape \cite{DOV}. The directed landscape $\mathcal{L}(y,s;x,t)$, $0<s<t$, $x,y\in\realR$, is a ``random metric'' which has the following relation to the KPZ fixed point $\mathcal{L}(0,0;x,t) \stackrel{d}{=} \rH(x,t)$, where $\stackrel{d}{=}$ means an equation in law. It also has the following triangle inequality
    \begin{equation}
    \mathcal{L}(0,0;x,t)\ge \mathcal{L}(0,0;y,s)+\mathcal{L}(y,s;x,t)
    \end{equation}
    for any $s<t$ and $x,y\in\realR$. Moreover, $\mathcal{L}(0,0;y,s)$, and $\mathcal{L}(y,s;x,t)$ are independent.  Finally, the one point distribution of $\mathcal{L}(y,s;x,t)$ is given by the GUE Tracy-Widom distribution
    \begin{equation}
    \prob\left( \mathcal{L}(y,s;x,t)\le h \right) = \FGUE\left( \frac{h}{(t-s)^{1/3}} +\frac{(x-y)^2}{(t-s)^{4/3}}\right).
    \end{equation}
    
    Using the language of the directed landscape, we have, for any $\tau_1<\tau_2$,
    \begin{equation}
    \label{eq:aux_11_30}
    \begin{split}
    &\prob\left(\HH(\alpha_1,\tau_1) \ge \beta_1, \HH(\alpha_2,\tau_2) < \beta_2\right)\\
    &=\prob\left(\rH(\alpha_1 L^{-1},1+\tau_1 L^{-3/2})\ge L +\beta_1 L^{-1/2}, \rH(\alpha_2 L^{-1},1+\tau_2 L^{-3/2})< L +\beta_2 L^{-1/2}\right)\\
    &\le \prob\left( 
    \begin{array}{c}
    \mathcal{L}(0,0;\alpha_1 L^{-1},1+\tau_1 L^{-3/2})\ge L +\beta_1 L^{-1/2}\\
    \mathcal{L}(\alpha_1 L^{-1},1+\tau_1 L^{-3/2};\alpha_2 L^{-1},1+\tau_2 L^{-3/2})<(\beta_2-\beta_1) L^{-1/2} 
    \end{array}
    \right)\\
    &=\prob\left(\HH(\alpha_1,\tau_1) \ge \beta_1\right) \cdot \FGUE\left(\frac{\beta_2-\beta_1}{(\tau_2-\tau_1)^{1/3}}+\frac{(\alpha_2-\alpha_1)^2}{(\tau_2-\tau_1)^{4/3}}\right).
    \end{split}
    \end{equation}
    As a special case we have
    \begin{equation}
    \prob\left(\HH(0,0)\ge 0, \HH(\alpha,\tau) < \beta\right)\le \prob\left(\HH(0,0)\ge 0\right)\cdot \FGUE\left(\frac{\beta}{\tau^{1/3}}+\frac{\alpha^2}{\tau^{4/3}}\right) 
    \end{equation}
    when $\tau>0$, and hence
    \begin{equation}
    \prob\left(\HH(\alpha,\tau) < \beta\mid  \HH(0,0)\ge 0\right)\le \FGUE\left(\frac{\beta}{\tau^{1/3}}+\frac{\alpha^2}{\tau^{4/3}}\right).
    \end{equation}
    By taking $L\to\infty$ and applying \eqref{eq:cond_limit}, we get
    \begin{equation}
     1-  \FGUE\left(\frac{\beta}{\tau^{1/3}}+\frac{\alpha^2}{\tau^{4/3}}\right)\le \hat\FT(\beta;(\alpha,\tau)) \le 1.
    \end{equation}
    We further take $\beta\to-\infty$ and obtain the desired limit $\lim_{\beta\to-\infty}\hat\FT(\beta;(\alpha,\tau))=1$.
    
    Case 3: $\tau<0$. Note in this case we have, using Definition \ref{def:cH_by_tail},
    \begin{equation}
        \hat\FT(\beta;(\alpha,\tau)) = \FT(\beta,0;(\alpha,\tau),(0,0))
    \end{equation}
    which goes to $1$ as $\beta\to -\infty$  by  Proposition \ref{prop:FT_tail} which will be proved in the next subsection as a property of the function $\FT$.
    
    Case 4: $\tau=0$ and $\alpha\ne 0$. We first apply \eqref{eq:aux_11_30} and obtain
    \begin{equation}
    \prob\left(\HH(\alpha,0)<\beta,\HH(\alpha,-1)\ge \beta/2\right) \le \prob\left(\HH(\alpha,-1)\ge \beta/2\right)\FGUE\left(\beta/2\right).
    \end{equation}
    By taking $L\to\infty$ and applying \eqref{eq:tail}, we have
    \begin{equation}
    \lim_{L\to\infty}\frac{\prob\left(\HH(\alpha,0)<\beta,\HH(\alpha,-1)\ge \beta/2\right)}{\prob\left(\HH(0,0)\ge 0\right)} \le e^{-\frac{2}{3}-\beta} \FGUE\left(\beta/2\right).
    \end{equation}
    This further implies
    \begin{equation}
    \lim_{L\to\infty}\prob\left(\HH(\alpha,0)<\beta,\HH(\alpha,-1)\ge \beta/2 \mid \HH(0,0)\ge 0\right)\le e^{-\frac{2}{3}-\beta} \FGUE\left(\beta/2\right)\to 0 
    \end{equation}
    as $\beta\to-\infty$, where we used the left tail estimate of the GUE Tracy-Widom distribution $\FGUE(\beta)\approx e^{-(\frac1{24}+o(1))|\beta|^{3}}$, see \cite{BBD08}. On the other hand, the third case above implies
    \begin{equation}
    \lim_{\beta\to-\infty}\lim_{L\to\infty}\prob\left(\HH(\alpha,-1)< \beta/2 \mid \HH(0,0)\ge 0\right)=\lim_{\beta\to-\infty} (1-\hat\FT(\beta/2;(\alpha,-1)))=0.
    \end{equation}
     Finally we also note that
    \begin{equation}
    \begin{split}
    &\prob\left(\HH(\alpha,0)<\beta \mid \HH(0,0)\ge 0\right) \\
    &\le \prob\left(\HH(\alpha,0)<\beta,\HH(\alpha,-1)\ge \beta/2 \mid \HH(0,0)\ge 0\right) + \prob\left(\HH(\alpha,-1)< \beta/2 \mid \HH(0,0)\ge 0\right).
    \end{split}
    \end{equation}
    Combining the above argument, we obtain $\lim_{\beta\to-\infty}\lim_{L\to\infty}\prob\left(\HH(\alpha,0)<\beta \mid \HH(0,0)\ge 0\right)=0$, which implies $\lim_{\beta\to-\infty}\hat\FT(\beta;(\alpha,0))=1$. This completes the proof.
\end{proof}

\subsection{Some properties of $\FT$}
\label{sec:Properties_FT}

In this subsection, we discuss several properties of the function $\FT$.

We first note that the parameters $\beta_\ell$, $\alpha_\ell$, $\tau_\ell$, $1\le \ell \le m$, only appear in the functions $f_\ell$ in the definition of $\FT$, and these functions are exponential functions which are differentiable arbitrarily many times with respect to the parameters. One might wonder whether this property passes to the function $\FT$. This actually is true, and we state it in Proposition \ref{prop:differentiability}. It follows from the following lemma about the change of the order of integral/summation and the differentiation.

\begin{lm}
    \label{lm:exchange_differentiation_sum}
    Suppose $\mu$ is a complex measure on a space $\Omega$, and $x\in\realR$ is a fixed number. If 
    \begin{equation}
        \int_\Omega \left| F(W)e^{y g(W)} \right| |\rd \mu (W)|<\infty, \quad  \int_\Omega \left| F(W)g(W)e^{y g(W)} \right| |\rd \mu (W)|<\infty
    \end{equation}
    for $y\in(x-\delta,x+\delta)$ for some $\delta>0$, then 
    \begin{equation}
    \frac{\rd }{\rd y}\Bigg|_{y=x}\int_\Omega F(W)e^{y g(W)} \rd \mu(W)= \int_\Omega F(W)g(W)e^{x g(W)} \rd \mu(W).
    \end{equation}
\end{lm}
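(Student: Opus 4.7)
The plan is to follow the standard dominated-convergence route, adapted to a complex measure $\mu$. Write the derivative as a limit of difference quotients: for $h \in \realR$ with $0 < |h| < \delta/2$, I would study
\begin{equation}
\frac{1}{h}\int_\Omega F(W)\bigl(e^{(x+h)g(W)}-e^{xg(W)}\bigr)\,\rd\mu(W).
\end{equation}
The first step is to apply the fundamental theorem of calculus to the integrand pointwise in $W$, writing
\begin{equation}
\frac{e^{(x+h)g(W)}-e^{xg(W)}}{h} = \int_0^1 g(W)\,e^{(x+th)g(W)}\,\rd t,
\end{equation}
so that the pointwise limit as $h\to 0$ is $g(W)e^{xg(W)}$, which is exactly the integrand on the right-hand side of the claimed identity.

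The second step is to produce an integrable dominator valid uniformly for $|h|<\delta/2$. Since $|e^{(x+th)g(W)}| = e^{(x+th)\Re g(W)}$, splitting on the sign of $\Re g(W)$ gives, for every $t\in[0,1]$,
\begin{equation}
\bigl|e^{(x+th)g(W)}\bigr| \le \bigl|e^{(x+\delta/2)g(W)}\bigr| + \bigl|e^{(x-\delta/2)g(W)}\bigr|.
\end{equation}
Therefore
\begin{equation}
\left|F(W)\cdot \frac{e^{(x+h)g(W)}-e^{xg(W)}}{h}\right| \le \bigl|F(W)g(W)\bigr|\Bigl(\bigl|e^{(x+\delta/2)g(W)}\bigr| + \bigl|e^{(x-\delta/2)g(W)}\bigr|\Bigr),
\end{equation}
and by the hypothesis at the two points $y = x\pm\delta/2 \in (x-\delta,x+\delta)$ the right-hand side is $|\mu|$-integrable.

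The third and final step is to invoke the dominated convergence theorem for the complex measure $\mu$ (applied, as usual, to the total variation $|\mu|$): the difference quotients converge pointwise to $F(W)g(W)e^{xg(W)}$ and are uniformly bounded by the integrable function above, so we may pass the limit inside the integral. This yields
\begin{equation}
\frac{\rd}{\rd y}\bigg|_{y=x}\int_\Omega F(W)e^{yg(W)}\rd\mu(W) = \int_\Omega F(W)g(W)e^{xg(W)}\rd\mu(W),
\end{equation}
as claimed. There is no real obstacle here; the only mild subtlety is the need for the two-sided neighborhood $(x-\delta,x+\delta)$ in the hypothesis, which is precisely what lets us cover both signs of $\Re g(W)$ in the dominator and thereby justify the exchange of derivative and integral.
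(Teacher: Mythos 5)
Your proof is correct, and it follows the same overall skeleton as the paper's: represent the derivative as a limit of difference quotients, dominate them by an $h$-independent $|\mu|$-integrable function built from the hypotheses at $y=x\pm\delta/2$, and apply dominated convergence with respect to the total variation measure. The only genuine difference is how the dominator is produced. The paper factors the quotient as $F(W)g(W)\frac{e^{\epsilon g(W)}-1}{\epsilon g(W)}e^{xg(W)}$ and proves the auxiliary inequality $\left|\frac{e^w-1}{w}\right|\le e\left(|e^w|+1\right)$ for all $w\in\complexC$ (via a direct bound for $|w|\ge 1$ and the maximum modulus principle on $|w|\le 1$), which then yields a bound of the form $|F g|\,|e^{xg}|\left(|e^{\frac{\delta}{2}g}|+|e^{-\frac{\delta}{2}g}|+1\right)$. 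You instead write the quotient with the fundamental theorem of calculus, $\frac{e^{(x+h)g}-e^{xg}}{h}=\int_0^1 g\,e^{(x+th)g}\,\rd t$, and use monotonicity of $s\mapsto e^{s\Re g(W)}$ to bound it by $|g|\left(|e^{(x+\delta/2)g}|+|e^{(x-\delta/2)g}|\right)$. Your route avoids the complex-analytic auxiliary inequality entirely and is arguably cleaner, at the cost of nothing; the paper's inequality is slightly more self-contained as a reusable elementary estimate. Both arguments use exactly the stated hypotheses, and both correctly handle the two-sided neighborhood so that both signs of $\Re g(W)$ are covered.
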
 

\begin{proof}[Proof of Lemma \ref{lm:exchange_differentiation_sum}]
    We need the following simple inequality
    \begin{equation}
        \label{eq:simple_inequality_exp}
        \left| \frac{e^w -1 }{w} \right| \le e\cdot (|e^w| +1),\quad w\in\complexC.    
       \end{equation}
    Now assuming this inequality holds, we prove the lemma. We write
    \begin{equation}
        \label{eq:exchange_differentiation_sum2}
        \begin{split}
        \frac{\rd }{\rd y}\Bigg|_{y=x}\int_\Omega F(W)e^{y g(W)} \rd \mu(W)
        &=\lim_{\epsilon\to 0} \int_\Omega F(W)\frac{e^{(x+\epsilon) g(W)}-e^{x g(W)}}{\epsilon} \rd \mu(W)\\
        &=\lim_{\epsilon\to 0} \int_\Omega F(W)g(W)\frac{e^{(x+\epsilon) g(W)}-e^{x g(W)}}{\epsilon g(W)} \rd \mu(W).
        \end{split}
    \end{equation}
    Note that for any $\epsilon \in (-\delta/2,\delta/2)$, we have, by using \eqref{eq:simple_inequality_exp},
       \begin{equation}
        \begin{split}
        \left| \frac{e^{(x+\epsilon) g(W)}-e^{x g(W)}}{\epsilon g(W)} \right| &\le e\cdot \left|e^{x g(W)}\right| \left(|e^{\epsilon g(W)}| +1\right)\\
        & \le e\cdot \left|e^{x g(W)}\right| \left(|e^{\frac{\delta}{2} g(W)}| +|e^{-\frac{\delta}{2} g(W)}|+1\right).
        \end{split}
       \end{equation}
    Moreover, the assumption in the lemma implies
    \begin{equation}
        \int_\Omega |F(W)g(W)| e\cdot \left|e^{x g(W)}\right| \left(|e^{\frac{\delta}{2} g(W)}| +|e^{-\frac{\delta}{2} g(W)}|+1\right)  |\rd \mu(W)|<\infty.
    \end{equation}   
    Therefore the dominated convergence theorem applies on the right hand side of \eqref{eq:exchange_differentiation_sum2} and we can take the differentiation within the integral. This proves the lemma.

    It remains to prove \eqref{eq:simple_inequality_exp}. We prove it by considering two different cases.
    The first case is  $|w|\ge 1$. In this case, we have
\begin{equation}
    \left| \frac{e^w -1 }{w} \right| \le \left|e^w-1\right| \le |e^w|+1.
\end{equation}
The second case is that $|w|\le 1$. Note that $(e^w-1)/w$ is analytic. Hence, its maximal norm within the disk is obtained at the boundary
\begin{equation}
    \left| \frac{e^w-1}{w}\right| \le \max_{|w'|=1} \left| \frac{e^{w'}-1}{w'}\right|= \max_{|w'|=1} |e^{w'} -1| \le e+1 \le e+ |e^{w+1}|.
\end{equation}
Combining the two cases we get the desired inequality.
\end{proof}

A simple corollary of Lemma \ref{lm:exchange_differentiation_sum} is that we can take the derivatives with respect to the parameters of $\FT$ inside the summand and integrand, more explicitly, the derivatives can be taken to the function $\prod_{\ell=2}^m \prod_{i_\ell=1}^{n_m} \frac{f_\ell(u_{i_\ell}^{(\ell)})}{f_\ell(v_{i_\ell}^{(\ell)})}\cdot \frac{f_1(-1)}{f_1(1)}$ which is the only part containing the parameters. Note the assumptions of Lemma \ref{lm:exchange_differentiation_sum} are satisfied as long as the parameters are within the interior of the domain $\{(\alpha_1,\tau_1)\prec \cdots\prec (\alpha_m,\tau_m)\}$. 

\begin{prop}[Differentiability]
    \label{prop:differentiability}
The function $\FT(\bbeta;(\alpha_1,\tau_1),\cdots,(\alpha_m,\tau_m))$ is a smooth function on all the parameters $\beta_\ell$, $\alpha_\ell$, $\tau_\ell$, $1\le\ell\le m$, within the domain $\{(\alpha_1,\tau_1)\prec \cdots\prec (\alpha_m,\tau_m)\}$. Moreover, the derivatives can be taken inside $\cD_{\bn}$ in \eqref{eq:def_FT_2} when $m\ge 2$. More explicitly, let $\mathrm{D}$ denote the differentiation operator with respect to any of these parameters, and $\mathrm{D}^k$ denote the $k$-th differentiation which could be with respect to different parameters, we have
\begin{equation}
    \begin{split}
    &\mathrm{D}^k \FT(\bbeta;(\alpha_1,\tau_1),\cdots,(\alpha_m,\tau_m))\\
    &=(-1)^m \oint_{>1}\cdots \oint_{>1} \sum_{\substack{n_\ell \ge 1\\ 2\le \ell\le m}} \frac{1}{ (n_2!\cdots n_{m-1}!)^2} \left(\mathrm{D}^k\cD_{\bn}(\bbeta;\bz) \right)\prod_{\ell=1}^{m-1}\frac{\rd z_\ell}{2\pi\ii z_\ell(1-z_\ell)},
    \end{split}
\end{equation}
where $\mathrm{D}^k\cD_{\bn}(\bbeta;\bz)$ has the same formula as for $\cD_{\bn}(\bbeta;\bz)$ in \eqref{eq:def_cdnz}, except that we need to replace the factor $\prod_{\ell=2}^m \prod_{i_\ell=1}^{n_m} \frac{f_\ell(u_{i_\ell}^{(\ell)})}{f_\ell(v_{i_\ell}^{(\ell)})}\cdot \frac{f_1(-1)}{f_1(1)}$ by $\mathrm{D}^k\prod_{\ell=2}^m \prod_{i_\ell=1}^{n_m} \frac{f_\ell(u_{i_\ell}^{(\ell)})}{f_\ell(v_{i_\ell}^{(\ell)})}\cdot \frac{f_1(-1)}{f_1(1)}$ in the integrand of \eqref{eq:def_cdnz}.
\end{prop}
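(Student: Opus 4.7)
The plan is to apply Lemma \ref{lm:exchange_differentiation_sum} inductively: first handle a single derivative with respect to one of $\beta_k,\alpha_k,\tau_k$, then iterate for higher-order derivatives.

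First I would observe that the parameters enter \eqref{eq:def_cdnz} only through the single factor
\[
G(\bbeta,\balpha,\btau;U,V) := \prod_{\ell=2}^{m}\prod_{i_\ell=1}^{n_\ell}\frac{f_\ell(u_{i_\ell}^{(\ell)})}{f_\ell(v_{i_\ell}^{(\ell)})}\cdot\frac{f_1(-1)}{f_1(1)} = e^{P(\bbeta,\balpha,\btau;U,V)},
\]
where $P$ is a polynomial of degree at most three in the integration variables $u_{i_\ell}^{(\ell)},v_{i_\ell}^{(\ell)}$ and affine-linear in each parameter $\beta_k,\alpha_k,\tau_k$. In particular $\partial G/\partial\beta_k$, $\partial G/\partial\alpha_k$, $\partial G/\partial\tau_k$ each equal $G$ times a polynomial in the $u,v$ variables of degree at most $1$, $2$, $3$ respectively.

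For a single derivative, say $\partial/\partial\beta_k$, I would view the expression \eqref{eq:def_FT_2} as a single complex-measure integral of the form $y\mapsto \int \tilde F(W)\,e^{y g(W)}\,\rd\mu(W)$. Here $W$ bundles the variables $\bz,U,V,\bn$; the complex measure $\mu$ is the product of the $z$-contour measures, the $\Gamma$-contour measures (with the alternating weights $\tfrac{1}{1-z_{\ell-1}}$ and $-\tfrac{z_{\ell-1}}{1-z_{\ell-1}}$ selecting the $\inn$ or $\out$ branch), and the weighted counting measure on $\bn$ coming from $(n_2!\cdots n_{m-1}!)^{-2}$; $g(W)=\partial P/\partial\beta_k$; and $\tilde F(W)$ collects the remaining $\beta_k$-free factors (the Cauchy determinants, the prefactor $\prod_\ell(1-z_\ell)^{n_\ell}(1-z_\ell^{-1})^{n_{\ell+1}}$, and $e^{P-\beta_k g}$). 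Lemma \ref{lm:exchange_differentiation_sum} then yields the desired formula, once its hypothesis is verified.

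The hypothesis reduces to showing that $\int|\tilde F(W)\,g(W)\,e^{y g(W)}|\,\rd|\mu|(W)$ is finite for $y$ in a neighborhood of $\beta_k$; this is exactly the absolute-convergence estimate displayed just after \eqref{eq:def_cdnz_alt}, except that the integrand now carries an extra polynomial factor $g(W)$. Multiplication by $g(W)$ contributes at most polynomial growth of low degree in $u,v$ and a combinatorial factor of size $O(\sum_\ell n_\ell)$ in $\bn$. Both are absorbed: the super-exponential decay of $f_\ell$ on the left contours and of $1/f_\ell$ on the right contours keeps $\int_{\Gamma_{\ell,\LL}^{\bullet}}(1+|u|)^d|f_\ell(u)|\,|\rd u|$ and its right analog finite for every $d\ge 0$, while the geometric factor $n_\ell^{n_\ell}/(n_\ell!)^2$ already present in the summability bound easily dominates any polynomial in $\bn$. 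A small perturbation $\beta_k\mapsto\beta_k+y$ only alters the linear coefficient of $P$ in $w$, leaving the super-exponential rates on the $\Gamma$-contours unchanged, so the majorant is uniform on a neighborhood of the parameter.

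The same argument handles $\partial/\partial\alpha_k$ and $\partial/\partial\tau_k$, where $g(W)$ is a polynomial of degree two or three in $u,v$ respectively. Higher-order derivatives are obtained by iteration: each additional application of Lemma \ref{lm:exchange_differentiation_sum} inserts one more polynomial factor into the integrand, and the super-exponential decay on the contours together with the $n_\ell^{n_\ell}/(n_\ell!)^2$ decay in $\bn$ continues to dominate. The main obstacle is bookkeeping: after $k$ iterations the accumulated polynomial factors have combined degree up to $3k$ in the $u,v$ variables and size $(\sum_\ell n_\ell)^k$ in $\bn$, and one must verify at each stage that the dominating integrand of Lemma \ref{lm:exchange_differentiation_sum} remains finite and locally uniform. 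No idea is needed beyond the absolute-convergence estimate that already underlies Definition \ref{defn:FT}.
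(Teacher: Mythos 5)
Your proposal is correct and follows essentially the same route as the paper: the paper also treats Proposition \ref{prop:differentiability} as a direct corollary of Lemma \ref{lm:exchange_differentiation_sum}, noting that the parameters enter only through the exponential factor $\prod_{\ell,i_\ell} f_\ell(u_{i_\ell}^{(\ell)})/f_\ell(v_{i_\ell}^{(\ell)})\cdot f_1(-1)/f_1(1)$ and that the hypotheses of that lemma hold in the interior of the ordered domain, exactly via the absolute-convergence bound used to define $\FT$ (which, as you observe, absorbs the extra polynomial factors produced by each differentiation). Your write-up merely spells out the dominating-majorant bookkeeping that the paper leaves implicit, so there is nothing to correct.
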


\bigskip

The next property of $\FT$ involves the shift of parameters. The functions $f_\ell$ have a nice invariance property under the shift of parameters, which passes to the function $\FT$. This property is explicitly stated below.

\begin{prop}[Shift on parameters]
    \label{prop:shift_parameters}
    Assume that $\beta_1,\cdots,\beta_m\in\realR$, and $(\alpha_1,\tau_1)\prec \cdots\prec(\alpha_m,\tau_m)$ are $m$ ordered points on $\realR^2$. Then we have
    \begin{equation}
        \label{eq:shift_parameter}
        \FT(\bbeta;(\alpha_1,\tau_1),\cdots,(\alpha_m,\tau_m))
        = e^{\frac{2}{3}\hat\tau -2\hat\beta} \FT(\beta_1-\hat\beta,\cdots,\beta_m-\hat\beta; (\alpha_1-\hat\alpha,\tau_1-\hat\tau),\cdots,(\alpha_m-\hat\alpha,\tau_m-\hat\tau))
    \end{equation}
    for any $\hat\beta,\hat\alpha,\hat\tau \in \realR$. Moreover, if $\mathrm{D}^k$ denotes the $k$-th differentiation which could be with respect to different parameters among $\{\alpha_1,\cdots,\alpha_m,\beta_1,\cdots,\beta_m,\tau_1,\cdots,\tau_m\}$, then $\mathrm{D}^k\FT$ has the same shift property as $\FT$. More explicitly, if 
    \begin{equation}
        \mathcal{T}(\bbeta;(\alpha_1,\tau_1),\cdots,(\alpha_m,\tau_m)) = \mathrm{D}^k\FT(\bbeta;(\alpha_1,\tau_1),\cdots,(\alpha_m,\tau_m)),
    \end{equation}
    then
    \begin{equation}
        \label{eq:shift_parameter1}
        \mathcal{T}(\bbeta;(\alpha_1,\tau_1),\cdots,(\alpha_m,\tau_m))
        = e^{\frac{2}{3}\hat\tau -2\hat\beta} \mathcal{T}(\beta_1-\hat\beta,\cdots,\beta_m-\hat\beta; (\alpha_1-\hat\alpha,\tau_1-\hat\tau),\cdots,(\alpha_m-\hat\alpha,\tau_m-\hat\tau)).
    \end{equation}
\end{prop}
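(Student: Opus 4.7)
The plan is to reduce the identity to a clean algebraic fact about the functions $f_\ell$ appearing in Definition \ref{defn:FT}. The key observation is that the shift $(\beta_\ell,\alpha_\ell,\tau_\ell)\mapsto (\beta_\ell-\hat\beta,\alpha_\ell-\hat\alpha,\tau_\ell-\hat\tau)$ affects the definition of $\FT$ \emph{only} through the functions $f_\ell$, and only $f_1$ is genuinely altered; the contours, the vectors $\bn,\bz,U^{(\ell)},V^{(\ell)}$, and the Cauchy determinants are all independent of the parameters. Therefore the proof reduces to tracking how $f_1$ transforms.

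First I would dispose of the case $m=1$ directly from \eqref{eq:def_FT_1}:
\begin{equation*}
e^{\frac{2}{3}\hat\tau-2\hat\beta}\FT(\beta_1-\hat\beta;(\alpha_1-\hat\alpha,\tau_1-\hat\tau))
= e^{\frac{2}{3}\hat\tau-2\hat\beta}e^{\frac{2}{3}(\tau_1-\hat\tau)-2(\beta_1-\hat\beta)}
= e^{\frac{2}{3}\tau_1-2\beta_1}=\FT(\beta_1;(\alpha_1,\tau_1)).
\end{equation*}
For $m\ge 2$, inspecting \eqref{eq:def_f} shows that for $\ell\ge 2$ the exponent of $f_\ell$ depends only on the \emph{differences} $\tau_\ell-\tau_{\ell-1}$, $\alpha_\ell-\alpha_{\ell-1}$, $\beta_\ell-\beta_{\ell-1}$, which are all invariant under a simultaneous shift by $(\hat\beta,\hat\alpha,\hat\tau)$. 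Hence the factors $\prod_{\ell=2}^m\prod_{i_\ell=1}^{n_\ell}\frac{f_\ell(u^{(\ell)}_{i_\ell})}{f_\ell(v^{(\ell)}_{i_\ell})}$ inside $\cD_{\bn}$ are unchanged. The remaining parameter-dependent factor is $f_1(-1)/f_1(1)$. A direct computation gives
\begin{equation*}
\frac{f_1(-1)}{f_1(1)}=e^{\frac{2}{3}\tau_1-2\beta_1},
\end{equation*}
in which $\alpha_1$ cancels. Under the shift this becomes $e^{\frac{2}{3}(\tau_1-\hat\tau)-2(\beta_1-\hat\beta)}$, so the two ratios differ by exactly the overall constant $e^{\frac{2}{3}\hat\tau-2\hat\beta}$. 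Substituting into \eqref{eq:def_FT_2} pulls this constant outside the sum/integrals and yields \eqref{eq:shift_parameter}. Note in particular that $\hat\alpha$ drops out; this is consistent with the fact that no $\hat\alpha$ appears in the prefactor of the proposition.

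For the derivative statement, I would appeal to Proposition \ref{prop:differentiability}, which allows $\mathrm{D}^k$ to be moved inside $\cD_{\bn}$, so $\mathrm{D}^k\FT$ has the same structural form as $\FT$, with the only parameter-dependent factor being the derivative of $\prod_{\ell=2}^m\prod_{i_\ell}\frac{f_\ell(u)}{f_\ell(v)}\cdot\frac{f_1(-1)}{f_1(1)}$. Alternatively (and more efficiently), I would differentiate the already-established identity \eqref{eq:shift_parameter} with respect to any parameter among $\{\alpha_j,\beta_j,\tau_j\}$: since $\hat\beta,\hat\alpha,\hat\tau$ are constants, the prefactor $e^{\frac{2}{3}\hat\tau-2\hat\beta}$ is inert, and the chain rule (with unit derivative of $x\mapsto x-\hat{x}$) gives
\begin{equation*}
\mathrm{D}^k\FT(\bbeta;(\alpha_1,\tau_1),\ldots,(\alpha_m,\tau_m))=e^{\frac{2}{3}\hat\tau-2\hat\beta}(\mathrm{D}^k\FT)(\bbeta-\hat\beta;(\alpha_1-\hat\alpha,\tau_1-\hat\tau),\ldots,(\alpha_m-\hat\alpha,\tau_m-\hat\tau)),
\end{equation*}
which is \eqref{eq:shift_parameter1}. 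Iterating gives the claim for all $k$.

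There is essentially no analytic obstacle here: the whole proposition is a structural consequence of two elementary facts, that $f_\ell$ ($\ell\ge 2$) depends only on increments and that $\alpha_1$ cancels in $f_1(-1)/f_1(1)$. The only mild care needed is to justify that differentiation commutes with the integrals and sums defining $\FT$, which is exactly the content of Proposition \ref{prop:differentiability} and Lemma \ref{lm:exchange_differentiation_sum}; once invoked, the derivative part follows mechanically.
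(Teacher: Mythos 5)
Your argument is correct and follows essentially the same route as the paper: the shift only enters through $f_1$ (the $f_\ell$ with $\ell\ge 2$ depend on increments only), the ratio $f_1(-1)/f_1(1)=e^{\frac23\tau_1-2\beta_1}$ produces exactly the prefactor $e^{\frac23\hat\tau-2\hat\beta}$, and the derivative statement is handled via Proposition \ref{prop:differentiability}. Your alternative for the derivative part — differentiating the already-established identity \eqref{eq:shift_parameter} and using that the shift preserves the ordering of the points — is a slightly slicker but equivalent packaging of the same fact.
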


We remark that for the second part of this proposition, if we take 
\begin{equation}
    \label{eq:shift_parameter_special1}
    \mathcal{T}(\bbeta;(\alpha_1,\tau_1),\cdots,(\alpha_m,\tau_m)) = \frac{\partial^m}{\partial \beta_1\cdots\partial\beta_m}\FT(\bbeta;(\alpha_1,\tau_1),\cdots,(\alpha_m,\tau_m)),
\end{equation}
then we have
\begin{equation}
    \label{eq:shift_parameter_special2}
    \mathcal{T}(\bbeta;(\alpha_1,\tau_1),\cdots,(\alpha_m,\tau_m)) = e^{\frac23 \tau_k -2\beta_k} 
    \mathcal{T}(\beta_1-\beta_k,\cdots,\beta_m-\beta_k; (\alpha_1-\alpha_k,\tau_1-\tau_k),\cdots,(\alpha_m-\alpha_k,\tau_m-\tau_k))
\end{equation}
for any $1\le k\le m$.
\begin{proof}[Proof of Proposition \ref{prop:shift_parameters}]
We first consider the proof of \eqref{eq:shift_parameter}. It is a direct check when $m=1$. For $m\ge 2$, note that the functions $f_\ell$, $2\le \ell\le m$, are invariant if we shift the parameters $\beta_\ell$ (or $\alpha_\ell$, $\tau_\ell$), $1\le \ell \le m$, by the same constant. See the definition of $f_\ell$ in \eqref{eq:def_f}. Hence the shift of parameters only affects the function $f_1$ in \eqref{eq:def_cdnz} and we have
\begin{equation}
    \begin{split}
        \cD_{\bn}(\bbeta-\hat\beta;\bz;(\alpha_1-\hat\alpha,\tau_1-\hat\tau),\cdots,(\alpha_m-\hat\alpha,\tau_m-\hat\tau))
        = e^{-\frac23\hat\tau+2\hat\beta}\cD_{\bn}(\bbeta;\bz;(\alpha_1,\tau_1),\cdots,(\alpha_m,\tau_m))
    \end{split}
\end{equation}
where $\bbeta=(\beta_1,\cdots,\beta_m)$ and $\bbeta-\hat\beta= (\beta_1-\hat\beta,\cdots,\beta_m-\hat\beta)$. Inserting it to \eqref{eq:def_FT_2} we get the desired property.

The proof of \eqref{eq:shift_parameter1} is similar. We apply Proposition \ref{prop:differentiability} and the differentiation can be passed to the exponential function 
\begin{equation}
    \mathrm{D}^k\prod_{\ell=2}^m \prod_{i_\ell=1}^{n_m} \frac{f_\ell(u_{i_\ell}^{(\ell)})}{f_\ell(v_{i_\ell}^{(\ell)})}\cdot \frac{f_1(-1)}{f_1(1)}
     = H\cdot \prod_{\ell=2}^m \prod_{i_\ell=1}^{n_m} \frac{f_\ell(u_{i_\ell}^{(\ell)})}{f_\ell(v_{i_\ell}^{(\ell)})}\cdot \frac{f_1(-1)}{f_1(1)}
\end{equation}
where $H$ is some function independent of the parameters $\alpha_\ell,\beta_\ell$ and $\tau_\ell$, $1\le \ell\le m$. Thus the above expression has the same shift property as that without the differentiation. \eqref{eq:shift_parameter1} follows immediately.
\end{proof}

\bigskip

Finally we are interested in the tail behavior of the function $\FT$. When $m=1$, $\FT$ is an exponential function. When $m=2$, we will see that $\FT$ has some nontrivial tail behaviors. We can assume that $\alpha_1=\beta_1=\tau_1=0$ or $\alpha_2=\beta_2=\tau_2=0$ by the Proposition \ref{prop:shift_parameters}.

\begin{prop}
    \label{prop:FT_tail}
      Assume that $\tau>0$ and $\alpha\in\realR$ are both fixed. Then we have
      \begin{equation}
        \label{eq:FT_tail1}
        \begin{split}
            &\FT(0,\beta;(0,0),(\alpha,\tau)) - e^{\frac{2}{3}\tau -2\beta}\\
            &= \begin{dcases}
                \displaystyle -\frac{\tau^{3/4}}{\sqrt{\pi}\beta^{5/4}} e^{-\frac{2}{3}\frac{\left(\beta+\frac{\alpha^2}{\tau}\right)^{3/2}}{\tau^{1/2}}+|\alpha|\left(\frac{\beta}{\tau}-1+\frac{2\alpha^2}{3\tau^2}\right) +\frac{1}{3}\tau -\beta} \cdot \left(1+ O(\beta^{-3/4})\right),& \alpha\ne 0,\\
                \displaystyle -\frac{2\tau^{3/4}}{\sqrt{\pi}\beta^{5/4}} e^{-\frac{2}{3}\frac{\beta^{3/2}}{\tau^{1/2}}+\frac{1}{3}\tau -\beta} \cdot \left(1+ O(\beta^{-3/4})\right),& \alpha= 0,
            \end{dcases}
        \end{split}
      \end{equation}
      and
      \begin{equation}
        \label{eq:FT_tail2}
        \begin{split}
            &\FT(-\beta,0;(-\alpha,-\tau),(0,0)) - 1\\
            &= \begin{dcases}
                \displaystyle -\frac{\tau^{3/4}}{\sqrt{\pi}\beta^{5/4}} e^{-\frac{2}{3}\frac{\left(\beta+\frac{\alpha^2}{\tau}\right)^{3/2}}{\tau^{1/2}}+|\alpha|\left(\frac{\beta}{\tau}-1+\frac{2\alpha^2}{3\tau^2}\right) -\frac{1}{3}\tau +\beta} \cdot \left(1+ O(\beta^{-3/4})\right),& \alpha\ne 0,\\
                \displaystyle -\frac{2\tau^{3/4}}{\sqrt{\pi}\beta^{5/4}} e^{-\frac{2}{3}\frac{\beta^{3/2}}{\tau^{1/2}}-\frac{1}{3}\tau +\beta} \cdot \left(1+ O(\beta^{-3/4})\right),& \alpha= 0,
            \end{dcases}
        \end{split}
      \end{equation}
      when $\beta\to\infty$.
\end{prop}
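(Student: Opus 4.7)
The plan is first to observe that \eqref{eq:FT_tail2} is an immediate consequence of \eqref{eq:FT_tail1} via the shift invariance \eqref{eq:shift_parameter} of Proposition \ref{prop:shift_parameters}: applying the shift with $(\hat\alpha,\hat\tau,\hat\beta)=(\alpha,\tau,\beta)$ yields $\FT(-\beta,0;(-\alpha,-\tau),(0,0)) = e^{-\frac{2}{3}\tau+2\beta}\FT(0,\beta;(0,0),(\alpha,\tau))$, so the leading terms match and the correction in \eqref{eq:FT_tail2} equals the correction in \eqref{eq:FT_tail1} multiplied by $e^{-\frac{2}{3}\tau+2\beta}$, converting the factor $e^{\tau/3-\beta}$ into $e^{-\tau/3+\beta}$ as stated. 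Hence I focus exclusively on establishing \eqref{eq:FT_tail1}.

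For \eqref{eq:FT_tail1}, I would use the alternative representation \eqref{eq:def_FT_alt} with $m=2$, namely $\FT(0,\beta;(0,0),(\alpha,\tau))=\sum_{n_2\geq 1}\tilde\cD_{(1,n_2)}$, where the integrals involve only the outer contours $\Gamma^{\mathrm{out}}_{2,\mathrm L}$ and $\Gamma^{\mathrm{out}}_{2,\mathrm R}$. Applying the Cauchy identity \eqref{eq:id_rC} together with $\rC(-1;1)=-\tfrac12$, the $n_2=1$ integrand simplifies to
\begin{equation*}
\frac{f_2(u)(1-u)(1+v)}{f_2(v)(v-u)^2(1+u)(1-v)},\qquad f_2(w):=e^{-\tau w^3/3+\alpha w^2+\beta w},
\end{equation*}
revealing simple poles at $u=-1$ and $v=1$. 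The key step is to deform the $u$-contour leftward across $u=-1$ onto the steepest descent path through the saddle $u_*=(\alpha-W)/\tau$ of $g(w):=-\tau w^3/3+\alpha w^2+\beta w$, and the $v$-contour rightward across $v=1$ onto the steepest descent through $v_*=(\alpha+W)/\tau$, where $W:=\sqrt{\alpha^2+\beta\tau}$. This decomposes $\tilde\cD_{(1,1)}$ into four pieces: a double residue at $(u,v)=(-1,1)$ which by direct computation equals exactly $e^{\frac{2}{3}\tau-2\beta}$ (matching the leading term); a saddle-saddle piece whose exponent $g(u_*)-g(v_*)=-4W^3/(3\tau^2)$ is exponentially smaller than the stated correction; and two mixed residue-saddle pieces $\mathcal{I}_{\mathrm{RS}}$ (residue at $v=1$, saddle in $u$) and $\mathcal{I}_{\mathrm{SR}}$ (residue at $u=-1$, saddle in $v$), which together produce the claimed correction.

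Taking the residue at $v=1$ reduces $\mathcal{I}_{\mathrm{RS}}$ to $\frac{2}{f_2(1)}\int\frac{f_2(u)}{1-u^2}\frac{du}{2\pi\ii}$ along the vertical steepest descent through $u_*$ (the direction dictated by $g''(u_*)=2W>0$), giving $\mathcal{I}_{\mathrm{RS}}\sim\frac{f_2(u_*)/f_2(1)}{\sqrt{\pi W}(1-u_*^2)}$. Using the identity $1-u_*^2=[\tau^2-(W-\alpha)^2]/\tau^2\sim-\beta/\tau$ together with $\sqrt{\pi W}\sim\sqrt{\pi}(\beta\tau)^{1/4}$ yields the prefactor $-\tau^{3/4}/(\sqrt{\pi}\beta^{5/4})$, and the algebraic identity $\tau u_*^2=2\alpha u_*+\beta$ (from $g'(u_*)=0$) produces the exact exponent $g(u_*)-g(1)=-\frac{2W^3}{3\tau^2}+\alpha(\beta/\tau-1+2\alpha^2/(3\tau^2))+\tau/3-\beta$. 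The mirror piece $\mathcal{I}_{\mathrm{SR}}$ has the analogous expression with $\alpha$ replaced by $-\alpha$. Thus for $\alpha>0$ the $\mathcal{I}_{\mathrm{RS}}$ exponent dominates (producing the coefficient $+\alpha=|\alpha|$), for $\alpha<0$ the $\mathcal{I}_{\mathrm{SR}}$ one dominates, and for $\alpha=0$ the two contributions coincide and sum to give the factor of $2$ in that case. The main obstacle is uniform remainder control: bounding the higher-$n_2$ terms (each additional pair $(u_i^{(2)},v_i^{(2)})$ contributes at worst a factor of order $e^{g(u_*)-g(v_*)}=e^{-4W^3/(3\tau^2)}$, combined with Cauchy-determinant bounds from Lemma \ref{lm:bounds_Cauchy_det} and super-exponential decay/growth of $f_2$, giving a summable geometric bound of the form $C^{n_2}e^{-4(n_2-1)W^3/(3\tau^2)}$ that is exponentially smaller than the leading correction) and extracting the sharp $O(\beta^{-3/4})$ error from the Laplace remainder on the steepest contour together with the $\beta^{-1/2}$-expansion of $\frac{1}{\sqrt{\pi W}(1-u_*^2)}$.
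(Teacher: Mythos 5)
Your proposal is correct and follows essentially the same route as the paper: reduce \eqref{eq:FT_tail2} to \eqref{eq:FT_tail1} via Proposition \ref{prop:shift_parameters}, expand the $m=2$ outer-contour formula of Proposition \ref{prop:FT_alt}, deform across the poles at $\mp 1$ to split off a double-residue term giving $e^{\frac{2}{3}\tau-2\beta}$, two mixed residue--saddle terms giving the correction (with the $\pm\alpha$ competition producing $|\alpha|$ and the factor $2$ at $\alpha=0$), and exponentially negligible pure-saddle and higher-$n$ contributions. This is precisely the paper's decomposition into $\FT_0,\FT_1,\FT_{-1},\FT_2$ followed by the same Airy-type saddle-point asymptotics.
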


\begin{proof}[Proof of Proposition \ref{prop:FT_tail}]
    Note that the shift property in Proposition \ref{prop:shift_parameters} implies that
    \begin{equation}
    \label{eq:aux_12_31_01}
    \FT(-\beta,0;(-\alpha,-\tau),(0,0)) = e^{-\frac{2}{3}\tau +2\beta} \FT(0,\beta;(0,0),(\alpha,\tau)),\quad \text{for any } (0,0)\prec (\alpha,\tau).
    \end{equation}
    Hence the two formulas \eqref{eq:FT_tail1} and \eqref{eq:FT_tail2} are equivalent. Below we only show \eqref{eq:FT_tail1}.

    We apply Proposition \ref{prop:FT_alt} and write
    \begin{equation}
        \label{eq:FT0_initial}
        \begin{split}
           &\FT(0,\beta;(0,0),(\alpha,\tau))\\
           &= \sum_{n\ge 1} \frac{1}{(n!)^2} \prod_{i=1}^n \int_{\Gamma_{2,\LL}^\out} \frac{\rd u_i}{2\pi\ii} \int_{\Gamma_{2,\RR}^\out} \frac{\rd v_i}{2\pi\ii} 2\rC(-1\bunion V;1\bunion U)\rC(U;V) \prod_{i=1}^n \frac{f_2(u_i)}{f_2(v_i)}\\
           &=\sum_{n\ge 1}\frac{(-1)^{n+1}}{(n!)^2}\prod_{i=1}^n \int_{\Gamma_{2,\LL}^\out} \frac{\rd u_i}{2\pi\ii} \int_{\Gamma_{2,\RR}^\out} \frac{\rd v_i}{2\pi\ii} \frac{\prod_{1\le i<j\le n} (u_i-u_j)^2 (v_i-v_j)^2}{ \prod_{1\le i,j\le n}(u_i-v_j)^2} \prod_{i=1}^n \frac{(u_i-1)(v_i+1)}{(u_i+1)(v_i-1)} \prod_{i=1}^n \frac{f_2(u_i)}{f_2(v_i)}
        \end{split}
    \end{equation}
where $U=(u_1,\cdots,u_n)$ and $V=(v_1,\cdots,v_n)$ and we applied the Cauchy determinant formula \eqref{eq:def_rC} in the second equation. The function $f_2(w) = e^{-\frac{\tau}{3}w^3 +\alpha w^2 +\beta w}$ as defined in \eqref{eq:def_f}. 

Now we deform the contours $\Gamma_{2,\LL}^\out$ and $\Gamma_{2,\RR}^\out$ to $\Gamma_{2,\LL}^\inn$ and $\Gamma_{2,\RR}^\inn$ respectively. Note that the $u_i$-contour passes the pole $-1$ and the $v_i$-contour passes the pole $1$ during the deformation. Hence the deformation can be expressed as 
\begin{equation}
    \int_{\Gamma_{2,\LL}^\out} \cdots \frac{\rd u_i}{2\pi\ii} = \int_{\Gamma_{2,\LL}^\inn} \cdots \frac{\rd u_i}{2\pi\ii} + \mathrm{Res}(\cdots;u_i=-1)
\end{equation}
where $\cdots$ is the suppressed integrand for simplification and $\mathrm{Res}(f;z=a)$ is the residue of $f$ at the point $a$
\begin{equation}
    \mathrm{Res}(f;z=a) = \oint_a f(z) \frac{\rd z}{2\pi\ii},
\end{equation}
and similarly
\begin{equation}
    \int_{\Gamma_{2,\RR}^\out} \cdots \frac{\rd v_i}{2\pi\ii} = \int_{\Gamma_{2,\RR}^\inn} \cdots \frac{\rd v_i}{2\pi\ii} - \mathrm{Res}(\cdots;v_i=1),
\end{equation}
here we have the $-$ sign in front of the residue since the orientation of the contours $\Gamma_{2,\RR}^\out$ and $\Gamma_{2,\RR}^\inn$ are from $\infty e^{-\ii 2\pi/5 }$ to $\infty e^{\ii 2\pi/5}$. See the beginning of Section \ref{sec:def_FT}. Note the factor $\prod_{1\le i<j\le n}(u_i-u_j)^2(v_i-v_j)^2$. So if two deformations on the left contours end at evaluating the residue, say at $u_i=1$ and $u_j=1$, then the integrand vanishes. Similarly we at most evaluate the residue at $1$ once when we deform the right contours. Also note the integrand is symmetric in $u_1,\cdots,u_n$ and in $v_1,\cdots,v_n$. We can write \eqref{eq:FT0_initial} as a sum of four terms
\begin{equation}
    \label{eq:FT_rt_split}
    \FT(0,\beta;(0,0),(\alpha,\tau)) = \FT_0 +\FT_1+\FT_{-1} +\FT_{2}
\end{equation}
where $\FT_0$ is the term obtained without evaluating any residues
\begin{equation}
    \FT_0 = \sum_{n\ge 1}\frac{(-1)^{n+1}}{(n!)^2}\prod_{i=1}^n \int_{\Gamma_{2,\LL}^\inn} \frac{\rd u_i}{2\pi\ii} \int_{\Gamma_{2,\RR}^\inn} \frac{\rd v_i}{2\pi\ii} \frac{\prod_{1\le i<j\le n} (u_i-u_j)^2 (v_i-v_j)^2}{ \prod_{1\le i,j\le n}(u_i-v_j)^2} \prod_{i=1}^n \frac{(u_i-1)(v_i+1)}{(u_i+1)(v_i-1)} \prod_{i=1}^n \frac{f_2(u_i)}{f_2(v_i)},
\end{equation}
$\FT_1$ is the term with one $v_i$ integral converted to the residue at $1$. There are $n$ such terms and by symmetry we assume $v_n$-integral becomes the residue at $v_n=1$. After simplification, we have
\begin{equation}
    \begin{split}
        \FT_1
        &=2e^{\frac{1}{3}\tau -\alpha-\beta}\sum_{n\ge 1} \frac{(-1)^{n}}{((n-1)!)^2n} \prod_{i=1}^n \int_{\Gamma_{2,\LL}^\inn} \frac{\rd u_i}{2\pi\ii} \prod_{j=1}^{n-1}\int_{\Gamma_{2,\RR}^\inn} \frac{\rd v_j}{2\pi\ii} \\
        &\quad\frac{\prod_{1\le i<j\le n}(u_i-u_j)^2 \prod_{1\le i<j\le n-1}(v_i-v_j)^2}{\prod_{1\le i\le n}\prod_{1\le j\le n-1}(u_i-v_j)^2} \cdot \frac{\prod_{j=1}^{n-1}(v_j-1)(v_j+1)}{\prod_{i=1}^n(u_i-1)(u_i+1)} \cdot \frac{\prod_{i=1}^n f_2(u_i)}{\prod_{j=1}^{n-1}f_2(v_j)}.
    \end{split}
\end{equation}
Similarly, $\FT_{-1}$ is the term with one $u_i$ integral converted to the residue at $-1$
\begin{equation}
    \begin{split}
        \FT_{-1}
        &=2e^{\frac{1}{3}\tau +\alpha-\beta}\sum_{n\ge 1} \frac{(-1)^{n}}{((n-1)!)^2n} \prod_{i=1}^{n-1} \int_{\Gamma_{2,\LL}^\inn} \frac{\rd u_i}{2\pi\ii} \prod_{j=1}^{n}\int_{\Gamma_{2,\RR}^\inn} \frac{\rd v_j}{2\pi\ii} \\
        &\quad\frac{\prod_{1\le i<j\le n-1}(u_i-u_j)^2 \prod_{1\le i<j\le n}(v_i-v_j)^2}{\prod_{1\le i\le n-1}\prod_{1\le j\le n}(u_i-v_j)^2} \cdot \frac{\prod_{i=1}^{n-1}(u_i-1)(u_i+1)}{\prod_{j=1}^{n}(v_j-1)(v_j+1)} \cdot \frac{\prod_{i=1}^{n-1} f_2(u_i)}{\prod_{j=1}^{n}f_2(v_j)}.
    \end{split}
\end{equation}
And finally, $\FT_2$ is the term with one $u_i$ integral converted to the residue at $-1$ and one $v_j$ integral converted to the residue at $1$
\begin{equation}
    \begin{split}
        \FT_{2}
        &=-e^{\frac{2}{3}\tau -2\beta}\sum_{n\ge 1} \frac{(-1)^{n}}{((n-1)!)^2} \prod_{i=1}^{n-1} \int_{\Gamma_{2,\LL}^\inn} \frac{\rd u_i}{2\pi\ii} \prod_{j=1}^{n-1}\int_{\Gamma_{2,\RR}^\inn} \frac{\rd v_j}{2\pi\ii} \\
        &\quad\frac{\prod_{1\le i<j\le n-1}(u_i-u_j)^2 \prod_{1\le i<j\le n-1}(v_i-v_j)^2}{\prod_{1\le i\le n-1}\prod_{1\le j\le n-1}(u_i-v_j)^2} \cdot \prod_{i=1}^{n-1}\frac{(v_i-1)(u_i+1)}{(u_i-1)(v_i+1)} \cdot\prod_{i=1}^{n-1} \frac{ f_2(u_i)}{f_2(v_i)}.
    \end{split}
\end{equation}
Now we apply the steepest descent analysis for each term as $\beta$ becomes large. Recall that $f_2(w) = e^{-\frac{\tau}{3}w^3+\alpha w^2 +\beta w}$. Note that when $\alpha=0$ and $\tau=1$, the integral of $f_2(w)$ along the contour $\Gamma_{2,\LL}^\inn$ is the Airy function of $\beta$. It is a classical textbook exercise to analyze the asymptotics when $\beta\to\infty$. The same exercise (after a shift and rescale of the $w$ variable) actually implies
\begin{equation}
    \int_{\Gamma_{2,\LL}^\inn} g(u) f_2(u) \frac{\rd u}{2\pi\ii} = \frac{g(-\sqrt{\beta/\tau})}{2\sqrt{\pi}(\beta\tau)^{1/4}}e^{-\frac{2}{3}\frac{\left(\beta +\frac{\alpha^2}{\tau}\right)^{3/2}}{\tau^{1/2}} + \frac{2}{3}\frac{\alpha^3}{\tau^2} +\frac{\alpha\beta}{\tau}} \left(1+ O(\beta^{-3/4})\right)
\end{equation} 
when $\beta\to\infty$ provided $g(u)$ is an analytic function to the left of $\Gamma_{2,\LL}^\inn$ independent of $\beta$ and it at most grows polynomially when $u\to\infty$. Similarly,
\begin{equation}
    \int_{\Gamma_{2,\RR}^\inn} g(v) \frac{1}{f_2(v)} \frac{\rd v}{2\pi\ii} = \frac{g(\sqrt{\beta/\tau})}{2\sqrt{\pi}(\beta\tau)^{1/4}}e^{-\frac{2}{3}\frac{\left(\beta +\frac{\alpha^2}{\tau}\right)^{3/2}}{\tau^{1/2}} - \frac{2}{3}\frac{\alpha^3}{\tau^2} -\frac{\alpha\beta}{\tau}} \left(1+ O(\beta^{-3/4})\right)
\end{equation} 
when $\beta\to\infty$ provided $g(v)$ is an analytic function to the right of $\Gamma_{2,\RR}^\inn$ independent of $\beta$ and it at most grows polynomially when $v\to\infty$. Note that both expressions decay like $e^{-\frac{2}{3}\beta^{3/2}/\tau^{1/2}}$ which is super-exponentially small when $\beta$ becomes large. This implies that in the summations of $\FT_i$ functions, $i=0,\pm1,2$, the leading term comes from the term with the least number of integrals, i.e., the summand when $n=1$. Inserting the above asymptotics in the $\FT_i$ functions when $n=1$, we get
\begin{equation}
    \begin{split}
    \FT_0 &= O\left(e^{-\frac43 \frac{\left(\beta +\frac{\alpha^2}{\tau}\right)^{3/2}}{\tau^{1/2}} }\right),\\
    \FT_1 & = -\frac{\tau^{3/4}}{\sqrt{\pi}\beta^{5/4}}e^{-\frac{2}{3}\frac{\left(\beta +\frac{\alpha^2}{\tau}\right)^{3/2}}{\tau^{1/2}} + \frac{2}{3}\frac{\alpha^3}{\tau^2} +\frac{\alpha\beta}{\tau}+\frac{1}{3}\tau-\alpha-\beta}\left(1+ O(\beta^{-3/4})\right),\\
    \FT_{-1}&=-\frac{\tau^{3/4}}{\sqrt{\pi}\beta^{5/4}}e^{-\frac{2}{3}\frac{\left(\beta +\frac{\alpha^2}{\tau}\right)^{3/2}}{\tau^{1/2}} - \frac{2}{3}\frac{\alpha^3}{\tau^2} -\frac{\alpha\beta}{\tau}+\frac{1}{3}\tau+\alpha-\beta}\left(1+ O(\beta^{-3/4})\right),\\
    \FT_{2}&=e^{\frac{2}{3}\tau-2\beta} + O\left(e^{-\frac43 \frac{\left(\beta +\frac{\alpha^2}{\tau}\right)^{3/2}}{\tau^{1/2}} }\right).
    \end{split}
\end{equation}
Inserting these asymptotics to \eqref{eq:FT_rt_split} we arrive at \eqref{eq:FT_tail1}.
\end{proof}

\subsection{Properties of $\cH_0$}
\label{sec:Properties_cH}
    
Recall the random field $\cH_0(\alpha,\tau) = \cH(\alpha,\tau) - \cH(0,0)$. We first prove that the field $\cH_0$ is independent of $\cH(0,0)$. It follows from the following lemma and the fact that $\cH$ has continuous tail probability functions by the definition (thus the non-differentiability at $\beta=0$ of the tail probability functions does not affect the independence).

\begin{lm}
    \label{lm:joint_density}
    Assume $m\ge k\ge 1$, and $(\alpha_1,\tau_1)\prec \cdots \prec (\alpha_{k-1},\tau_{k-1}) \prec (0,0) \prec (\alpha_k,\tau_k) \prec \cdots \prec(\alpha_m,\tau_m)$ on the plane $\realR^2$. The joint density function of $\cH_0(\alpha_\ell,\tau_\ell)$ and $\cH(0,0)$ exists and is given by
     \begin{equation}
        \label{eq:joint_density_1}
        \begin{split}
            &(-1)^{m+1}\frac{\partial^{m+1}}{\partial \beta_1 \cdots \partial \beta_m \partial \beta} \prob\left(\bigcap_{\ell=1}^m \left\{ \cH_0(\alpha_\ell,\tau_\ell) \ge \beta_k \right\} \bigcap \left\{\cH(0,0)\ge \beta\right\}
            \right)
            \\
            &=\dP(\beta_1,\cdots,\beta_m;(\alpha_1,\tau_1),\cdots,(\alpha_m,\tau_m))\cdot \dQ(\beta),\quad \beta \ne 0,
        \end{split}
     \end{equation}
     where $\dP$ is a function independent of $\beta$
     \begin{multline}
            \dP(\beta_1,\cdots,\beta_m;(\alpha_1,\tau_1),\cdots,(\alpha_m,\tau_m))=\frac12 \frac{(-1)^{m+1} \partial^{m+1}}{\partial \beta_1 \cdots \partial \beta_m\partial \hat\beta}\Bigg|_{\hat\beta=0} \FT(\beta_1,\cdots,\beta_{k-1},\hat\beta,\beta_{k},\cdots,\beta_m; \\
             (\alpha_1,\tau_1),\cdots,(\alpha_{k-1},\tau_{k-1}),(0,0),(\alpha_k,\tau_k),\cdots,(\alpha_m,\tau_m))
     \end{multline}
     and $\dQ$ is given by
     \begin{equation}
        \dQ(\beta) = \begin{dcases}
            2e^{-2\beta},& \beta>0,\\
            0,& \beta<0.
        \end{dcases}
     \end{equation}
\end{lm}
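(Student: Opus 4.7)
The plan is to exploit the linear change of variables $Y_\ell = X_\ell - X_0$ ($1\le \ell\le m$), $Y_0 = X_0$, where $X_\ell = \cH(\alpha_\ell,\tau_\ell)$ and $X_0 = \cH(0,0)$. Since this transformation has Jacobian one, the joint density of $(Y_1,\ldots,Y_m,Y_0)$ at $(\beta_1,\ldots,\beta_m,\beta)$ coincides with the joint density of $(X_1,\ldots,X_m,X_0)$ at $(\beta_1+\beta,\ldots,\beta_m+\beta,\beta)$. Our target identity is simply the assertion that this joint density, written in $(\bbeta,\beta)$ coordinates, factors.

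First I would write the joint tail function of $(X_1,\ldots,X_m,X_0)$ via Definition \ref{def:cH_by_tail}: with the ordering hypothesis inserting $(0,0)$ at the $k$-th slot, for $x_0>0$,
\begin{equation*}
T(x_1,\ldots,x_m,x_0) = \FT\bigl(x_1,\ldots,x_{k-1},x_0,x_k,\ldots,x_m;\, (\alpha_1,\tau_1),\ldots,(\alpha_{k-1},\tau_{k-1}),(0,0),(\alpha_k,\tau_k),\ldots,(\alpha_m,\tau_m)\bigr),
\end{equation*}
while for $x_0<0$ the middle argument is replaced by $0$, so $\partial_{x_0}T\equiv 0$ there; this matches $\dQ(\beta)=0$ for $\beta<0$ and handles that case immediately (since $\cH(0,0)\ge 0$ almost surely). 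For $x_0>0$ I would apply the shift property \eqref{eq:shift_parameter} with $\hat\alpha=\hat\tau=0$, $\hat\beta=x_0$ to obtain the crucial factorization
\begin{equation*}
T(x_1,\ldots,x_m,x_0) = e^{-2x_0}\,B(x_1-x_0,\ldots,x_m-x_0),
\end{equation*}
where $B(\gamma_1,\ldots,\gamma_m):=\FT(\gamma_1,\ldots,\gamma_{k-1},0,\gamma_k,\ldots,\gamma_m;\ldots)$, and $B$ is smooth in its arguments by Proposition \ref{prop:differentiability}.

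Next I would compute the joint density $f_X=(-1)^{m+1}\partial_{x_1}\cdots\partial_{x_m}\partial_{x_0}T$ for $x_0>0$. Setting $H:=\partial_1\cdots\partial_m B$, the $\beta_\ell$-derivatives slide through the factor $e^{-2x_0}$, and the $\partial_{x_0}$ produces two terms,
\begin{equation*}
f_X(x_1,\ldots,x_m,x_0) = (-1)^{m+1}e^{-2x_0}\Bigl(-2H(x_1-x_0,\ldots,x_m-x_0)-\sum_{j=1}^{m}\partial_j H(x_1-x_0,\ldots,x_m-x_0)\Bigr).
\end{equation*}
Substituting $x_\ell=\beta_\ell+\beta$ and $x_0=\beta$ yields $f_Y(\bbeta,\beta)=2e^{-2\beta}\cdot\dP(\bbeta)$ with $\dP(\bbeta)=\tfrac{(-1)^{m+1}}{2}(-2H(\bbeta)-\sum_j\partial_j H(\bbeta))$; this is the desired product $\dP(\bbeta)\dQ(\beta)$ for $\beta>0$.

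Finally, to match the formula for $\dP$ stated in the lemma, I would apply the same shift identity (and Proposition \ref{prop:differentiability} to exchange differentiation with the integrals/sums defining $\FT$) to the defining expression, replacing $\hat\beta$ by $0$ after differentiating: the computation $\partial_{\beta_1}\cdots\partial_{\beta_m}\partial_{\hat\beta}|_{\hat\beta=0}\{e^{-2\hat\beta}B(\bbeta-\hat\beta)\}$ reproduces $-2H(\bbeta)-\sum_j\partial_j H(\bbeta)$, confirming the stated $\dP$. The main thing to be careful about is bookkeeping of derivatives and the use of the shift property at the appropriate point; the legitimacy of differentiating under the integral/sum signs defining $\FT$ is already provided by Proposition \ref{prop:differentiability}, so no separate dominated-convergence argument is required. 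The only mild subtlety is that the joint density has a jump discontinuity at $\beta=0$ (coming from $\dQ$), which is why the statement is restricted to $\beta\ne 0$.
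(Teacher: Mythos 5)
Your proposal is correct and follows essentially the same route as the paper's proof: reduce via the unit-Jacobian change of variables to the joint density of $\cH(\alpha_1,\tau_1),\dots,\cH(\alpha_m,\tau_m),\cH(0,0)$, dispose of $\beta<0$ using Definition \ref{def:cH_by_tail}, and for $\beta>0$ use the shift property of Proposition \ref{prop:shift_parameters} together with the smoothness from Proposition \ref{prop:differentiability}. The only cosmetic difference is that you differentiate the shifted expression $e^{-2x_0}B(\bx-x_0)$ explicitly, whereas the paper invokes the shift identity directly at the level of the mixed derivative (equations \eqref{eq:shift_parameter_special1}--\eqref{eq:shift_parameter_special2}); these are the same computation.
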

\begin{proof}
    We first note that the function $\dP$ is well defined by Proposition \ref{prop:differentiability}.

    In order to show \eqref{eq:joint_density_1}, it is sufficient to show that, by the change of variables, the joint density function of $\cH(\alpha_1,\tau_1)$, $\cdots$, $\cH(\alpha_m,\tau_m)$, and $\cH(0,0)$, exists and is given by
    \begin{equation}
        \label{eq:joint_density_2}
        \begin{split}
            &(-1)^{m+1}\frac{\partial^{m+1}}{\partial \beta_1 \cdots \partial \beta_m \partial \beta} \prob\left(\bigcap_{\ell=1}^m \left\{ \cH(\alpha_\ell,\tau_\ell) \ge \beta_k \right\} \bigcap \left\{\cH(0,0)\ge \beta\right\}
            \right)
            \\
            &=\dP(\beta_1-\beta,\cdots,\beta_m-\beta;(\alpha_1,\tau_1),\cdots,(\alpha_m,\tau_m))\cdot \dQ(\beta)
        \end{split}
     \end{equation}
     when $\beta\ne 0$. Note that when $\beta<0$, the probability $\prob\left(\bigcap_{\ell=1}^m \left\{ \cH(\alpha_\ell,\tau_\ell) \ge \beta_k \right\} \bigcap \left\{\cH(0,0)\ge \beta\right\}
            \right)$ is independent of $\beta$ by Definition \ref{def:cH_by_tail}, hence the left hand side of \eqref{eq:joint_density_2} is zero which matches its right hand side. When $\beta>0$, the left hand side of \eqref{eq:joint_density_2} equals to, by the definition \ref{def:cH_by_tail},
            \begin{multline}
                (-1)^{m+1}\frac{\partial^{m+1}}{\partial \beta_1 \cdots \partial \beta_m \partial \beta} \FT(\beta_1,\cdots,\beta_{k-1},\beta,\beta_k,\cdots,\beta_m; \\
                (\alpha_1,\tau_1),\cdots,(\alpha_{k-1},\tau_{k-1}),(0,0),(\alpha_k,\tau_k),\cdots,(\alpha_m,\tau_m)).
            \end{multline}
            Note the definitions of $\dP$ and $\dQ$. Thus \eqref{eq:joint_density_2} is a property of the shift of parameters of the derivative of the function $\FT$, which follows from Proposition \ref{prop:shift_parameters}, or more explicitly the equations \eqref{eq:shift_parameter_special1} and \eqref{eq:shift_parameter_special2}. 
\end{proof}

\begin{rmk}
    A simple corollary of this proposition is that the joint density function of $\cH_0(\alpha_1,\tau_1)$, $\cdots$, $\cH_0(\alpha_m,\tau_m)$ exists and is given by $\dP$ since $\dQ$ is the density of $\cH(0,0)$.
\end{rmk}

Below we also provide a formula of the joint tail probability functions of $\cH_0$.
\begin{prop}
    \label{prop:finite_dimensional_cH_0}
    Let $m\ge 2$. Assume that $(\alpha_1,\tau_1)\prec \cdots \prec (\alpha_{m},\tau_{m})$ are $m$ points satisfying $(\alpha_k,\tau_k)=(0,0)$ for some $1\le k\le m$, and $\beta_1,\cdots,\beta_{k-1},\beta_{k+1},\cdots,\beta_{m}\in \realR$ are fixed. Then
    \begin{equation}
        \prob\left( \bigcap_{\substack{1\le \ell \le m\\ \ell \ne k}}\left\{\cH_0(\alpha_\ell,\tau_\ell) \ge \beta_\ell \right\}\right) = -\frac12 \frac{\partial}{\partial \beta_k}\Bigg|_{\beta_k=0} \FT(\beta_1,\cdots,\beta_{m};(\alpha_1,\tau_1),\cdots,(\alpha_{m},\tau_{m})).
    \end{equation}     
\end{prop}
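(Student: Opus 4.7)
The plan is to establish a convolution identity between $\FT$ and the joint tail of $\cH_0$, and then differentiate in $\beta_k$. For $\beta_k>0$, Definition \ref{def:cH_by_tail} gives
\begin{equation*}
\FT(\beta_1,\ldots,\beta_m;(\alpha_1,\tau_1),\ldots,(\alpha_m,\tau_m))=\prob\Bigl(\bigcap_{\ell=1}^{m}\{\cH(\alpha_\ell,\tau_\ell)\ge\beta_\ell\}\Bigr).
\end{equation*}
Writing $\cH(\alpha_\ell,\tau_\ell)=\cH_0(\alpha_\ell,\tau_\ell)+\cH(0,0)$, using the independence of $\cH_0$ from $\cH(0,0)$ that follows from the factorization in Lemma \ref{lm:joint_density}, and using the exponential density $\dQ(s)=2e^{-2s}\mathbf{1}_{s>0}$ of $\cH(0,0)$ (which is $-\tfrac{\rd}{\rd s}$ applied to \eqref{eq:cH_00} for $s>0$), I would condition on $\cH(0,0)=s$ to obtain
\begin{equation*}
\FT(\beta_1,\ldots,\beta_m;\ldots)=\int_{\beta_k}^{\infty} G(\beta_1-s,\ldots,\beta_{k-1}-s,\beta_{k+1}-s,\ldots,\beta_m-s)\cdot 2e^{-2s}\,\rd s,
\end{equation*}
where $G(b_1,\ldots,b_{k-1},b_{k+1},\ldots,b_m):=\prob\bigl(\bigcap_{\ell\ne k}\{\cH_0(\alpha_\ell,\tau_\ell)\ge b_\ell\}\bigr)$ is precisely the quantity we want to identify.

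The next step is to differentiate in $\beta_k$. Since $\beta_k$ enters only through the lower limit of integration (the other $\beta_\ell$'s play the role of fixed parameters inside $G$), the Leibniz rule gives, for $\beta_k>0$,
\begin{equation*}
\frac{\partial}{\partial\beta_k}\FT(\beta_1,\ldots,\beta_m;\ldots)=-2e^{-2\beta_k}\,G(\beta_1-\beta_k,\ldots,\beta_{k-1}-\beta_k,\beta_{k+1}-\beta_k,\ldots,\beta_m-\beta_k).
\end{equation*}
Taking $\beta_k\to 0^+$ and rearranging,
\begin{equation*}
G(\beta_1,\ldots,\beta_{k-1},\beta_{k+1},\ldots,\beta_m)=-\tfrac{1}{2}\,\partial_{\beta_k}\FT(\beta_1,\ldots,\beta_m;\ldots)\big|_{\beta_k=0^+}.
\end{equation*}
By Proposition \ref{prop:differentiability}, $\FT$ is smooth in all of its parameters throughout the domain $\{(\alpha_1,\tau_1)\prec\cdots\prec(\alpha_m,\tau_m)\}$, so the one-sided derivative equals the two-sided derivative at $\beta_k=0$, which yields the claimed formula.

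The main technical ingredient is the convolution identity itself; it requires Lemma \ref{lm:joint_density} in order to access both the independence structure and the explicit density $\dQ$. Once that is in hand the argument reduces to a single application of the fundamental theorem of calculus, with Proposition \ref{prop:differentiability} providing the passage from the right-derivative at $\beta_k=0^+$ to the two-sided derivative at $\beta_k=0$. No delicate tail estimates are needed here because $\beta_k$ appears only in the integration limit, so no boundary terms at $s=\infty$ are generated.
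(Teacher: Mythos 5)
Your proof is correct, but it takes a somewhat different route from the paper's. You integrate out $\cH(0,0)$ to obtain the convolution identity $\FT(\beta_1,\ldots,\beta_m;\cdots)=\int_{\beta_k}^{\infty}G(\beta_1-s,\ldots,\beta_{k-1}-s,\beta_{k+1}-s,\ldots,\beta_m-s)\,2e^{-2s}\,\rd s$ for $\beta_k>0$, differentiate in the lower limit, and then let $\beta_k\to0^+$ using the smoothness of $\FT$ (Proposition \ref{prop:differentiability}). The paper instead conditions on $\cH(0,0)=\hat\beta$ for an arbitrary fixed $\hat\beta>0$, writes the resulting conditional probability as a ratio of $\partial_{\beta_k}$-derivatives of joint tail functions, and removes $\hat\beta$ via the shift property of $\FT$ (Proposition \ref{prop:shift_parameters}); both arguments rest on the independence of $\cH_0$ from $\cH(0,0)$ and on the exponential law of $\cH(0,0)$, i.e.\ on Lemma \ref{lm:joint_density}. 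Your version is more directly probabilistic and never needs the shift property; the paper's version avoids any pointwise differentiation of an integral with a variable limit. One point you should make explicit: the Leibniz step $\partial_{\beta_k}\int_{\beta_k}^{\infty}f(s)\,\rd s=-f(\beta_k)$ and the limit $G(\beta_1-\beta_k,\ldots)\to G(\beta_1,\ldots)$ as $\beta_k\to0^+$ both require continuity of $G$ at the relevant arguments, which is not automatic for a joint tail function; here it follows at once from the existence of the joint density $\dP$ of the vector $\left(\cH_0(\alpha_\ell,\tau_\ell)\right)_{\ell\ne k}$ (Lemma \ref{lm:joint_density} and the remark following it), the same lemma you already invoke for the independence and for $\dQ$. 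With that sentence added, the argument is complete.
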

\begin{proof}[Proof of Proposition \ref{prop:finite_dimensional_cH_0}]
    Note that $\cH_0$ is independent of $\cH(0,0)$ as proved at the beginning of this subsection. Thus
    \begin{equation}
        \begin{split}
            \prob\left( \bigcap_{\substack{1\le \ell \le m\\ \ell \ne k}}\left\{\cH_0(\alpha_\ell,\tau_\ell) \ge \beta_\ell \right\}\right)
            &= \prob\left( \bigcap_{\substack{1\le \ell \le m\\ \ell\ne k }}\left\{\cH_0(\alpha_\ell,\tau_\ell) \ge \beta_\ell \right\}\, \Big| \, \cH(\alpha_k=0,\tau_k=0)=\hat\beta\right)\\
            &=\prob\left( \bigcap_{\substack{1\le \ell \le m\\ \ell\ne k }}\left\{\cH(\alpha_\ell,\tau_\ell) \ge \beta_\ell+\hat\beta \right\}\, \Big| \, \cH(\alpha_k=0,\tau_k=0)=\hat\beta\right)\\
            &=\frac{\frac{\partial}{\partial \beta_k}\Big|_{\beta_k=0}\prob\left( \bigcap_{1\le \ell \le m}\left\{\cH(\alpha_\ell,\tau_\ell)  \ge \beta_\ell +\hat\beta \right\} \right) }{\frac{\partial}{\partial \beta_k}\Big|_{\beta_k=0} \prob\left(\cH(0,0)\ge \beta_k+\hat\beta\right)}
        \end{split}
    \end{equation}
    for any $\hat\beta>0$. Recall Definition \ref{def:cH_by_tail}. The denominator of the right hand side is given by
    \begin{equation}
        \frac{\partial}{\partial \beta_k}\Big|_{\beta_k=0}\FT(\beta_k+\hat\beta;(0,0))=\frac{\partial}{\partial \beta_k}\Big|_{\beta_k=0} e^{-2(\beta_k +\hat \beta)} =-2e^{-2\hat\beta}
    \end{equation}
    and the numerator is, by Proposition \ref{prop:shift_parameters},
    \begin{equation}
        \begin{split}
        &\frac{\partial}{\partial \beta_k}\Big|_{\beta_k=0} \FT(\beta_1+\hat\beta,\cdots,\beta_{m}+\hat\beta;(\alpha_1,\tau_1),\cdots,(\alpha_{m},\tau_{m})) \\
        &= e^{-2\hat\beta} \frac{\partial}{\partial \beta_k}\Big|_{\beta_k=0} \FT(\beta_1,\cdots,\beta_{m};(\alpha_1,\tau_1),\cdots,(\alpha_{m},\tau_{m})).
        \end{split}
    \end{equation}
    Combing the two equations we complete the proof.
\end{proof}

\section{Proof of Proposition \ref{prop:main}}
\label{sec:proof_proposition}
\subsection{Tail probability formula of the KPZ fixed point}
\label{sec:KPZ_tail_prob}

The starting point of the proof is an explicit formula of the multipoint distribution functions of the KPZ fixed point $\rH(x,t)$, with the narrow-wedge initial condition. If $t=t_0$ is fixed, the process $\rH(x,t=t_0)$ is equivalent to a (rescaled) parabolic Airy$_2$ process with finite-dimensional distribution functions expressed as a Fredholm determinant with an extended Airy kernel, see \cite{PS02,Jo03}. In this paper, we need the finite-dimensional distribution functions of the full field $\rH(x,t)$ in the space-time plane. While there are no explicit formulas for a general initial condition so far, the finite-dimensional distribution functions of $\rH(x,t)$, the KPZ fixed point with the narrow-wedge initial condition, were recently obtained in \cite{JR21} and \cite{Liu19}. We will use a variation of the formula in \cite{Liu19} to prove Proposition \ref{prop:main}. More precisely, we will use a joint tail probability formula instead of the regular one proved in \cite{Liu19}. Such a tail probability formula for the KPZ fixed point was not explicitly written before, although similar expressions of the same nature have appeared in \cite{Liu-Wang22} and \cite{baik2024pinchedup}. We state the formula in the following proposition, and include its proof for completeness.

\begin{prop}
\label{prop:KPZ_tail_prob}
Assume $\bx=(x_1,\cdots,x_m), \bh=(h_1,\cdots,h_m)$ are two vectors in $\realR^m$ and $\bt=(t_1,\cdots,t_m)\in \realR_+^m$. Moreover, the points $(x_\ell,t_\ell)$, $1\le \ell\le m$, are ordered in the half plane $\realR\times\realR_+$: $(x_1,t_1)\prec (x_2,t_2) \prec \cdots \prec (x_m,t_m)$, where the relation $\prec$ is defined in the Definition \ref{defn:prec}. We have
\begin{equation}
\label{eq:KPZ_tail_prob}
\prob\left(\bigcap_{\ell=1}^m \{ \rH(x_\ell, t_\ell) \ge h_\ell \}\right)
= (-1)^m\oint_{>1} \cdots \oint_{>1} \dD(\bh;\bz) \prod_{\ell=1}^{m-1}\ddbar{z_\ell}{z_\ell(1-z_\ell)},
\end{equation}
where 
 \begin{equation}
 \label{eq:dD_summation}
        \begin{split}
            \dD(\bh;\bz)=\dD(\bh;\bz;(x_1,t_1),\cdots,(x_m,t_m)):=\sum_{\substack{n_\ell\ge 1\\ \ell=1,\cdots,m}}\frac{1}{(n_1!\cdots n_{m}!)^2} \dD_{\bn}(\bz,\bh),
        \end{split}
    \end{equation}
    with
    \begin{equation}
    \label{eq:def_dD_bn}
        \begin{split}
            \dD_{\bn}(\bh;\bz)=&
            \dD_{\bn}(\bh;\bz;(x_1,t_1),\cdots,(x_m,t_m))
            \\
            :=&
            \prod_{\ell=1}^{m-1}(1-z_\ell)^{n_\ell}(1-z_\ell^{-1})^{n_{\ell+1}}\\
            &\cdot
            \prod_{\ell=2}^{m} \prod_{i_\ell=1}^{n_\ell} \left(\frac{1}{1-z_{\ell-1}}\int_{C_{\ell,\LL}^\inn}\ddbar{\xi_{i_\ell}^{(\ell)}}{}-\frac{z_{\ell-1}}{1-z_{\ell-1}}\int_{C_{\ell,\LL}^\out}\ddbar{\xi_{i_\ell}^{(\ell)}}{}\right)\prod_{i_1=1}^{n_1}\int_{C_{1,\LL}}\ddbar{\xi_{i_1}^{(1)}}{}\\
	    &\cdot \prod_{\ell=2}^{m} \prod_{i_\ell=1}^{n_\ell} \left(\frac{1}{1-z_{\ell-1}}\int_{C_{\ell,\RR}^\inn}\ddbar{\eta_{i_\ell}^{(\ell)}}{}-\frac{z_{\ell- 
            1}}{1-z_{\ell-1}}\int_{C_{\ell,\RR}^\out}\ddbar{\eta_{i_\ell}^{(\ell)}}{}\right)\prod_{i_1=1}^{n_1}\int_{C_{1,\RR}}\ddbar{\eta_{i_1}^{(1)}}{}\\ 
            &\quad \rC(\bseta^{(1)};\bxi^{(1)})\cdot \prod_{\ell=1}^{m} \rC(\bxi^{(\ell)}\bunion \bseta^{(\ell+1)};\bseta^{(\ell)}\bunion \bxi^{(\ell+1)})
            \cdot \prod_{\ell=1}^{m}\prod_{i_\ell=1}^{n_\ell}\frac{F_\ell(\xi_{i_\ell}^{(\ell)})}{F_\ell(\eta_{i_\ell}^{(\ell)})}
        \end{split} 
    \end{equation}
    where the notations $\bxi_\ell=(\xi_1^{(\ell)},\cdots,\xi_{n_\ell}^{(\ell)})$ and $\bseta_\ell = (\eta_{1}^{(\ell)},\cdots,\eta_{n_\ell}^{(\ell)})$. We also set $\bxi^{(m+1)}=\bseta^{(m+1)}=\emptyset$. The function $\rC$ represents the Cauchy determinant defined in \eqref{eq:def_rC}, and the functions
    \begin{equation}
    \label{eq:def_F}
    F_\ell(\zeta) = F_\ell(\zeta;\bh;(x_1,t_1),\cdots,(x_m,t_m)):= 
    \begin{cases}
        e^{-\frac13 t_1 \zeta^3 +x_1 \zeta^2 +h_1\zeta}, & \ell=1,\\
        e^{-\frac{1}{3}(t_{\ell}-t_{\ell-1})\zeta^3 + (x_\ell -x_{\ell-1})\zeta^2 + (h_\ell -h_{\ell-1})\zeta }, & 2\le \ell \le m.
    \end{cases}
    \end{equation}
    The integration contours are given as follows. The contours $C_{m,\LL}^{\inn},\cdots,C_{2,\LL}^{\inn},C_{1,\LL},C_{2,\LL}^{\out},\cdots, C_{m,\LL}^{\out}$ are contours on the left half plane $\{\zeta\in\complexC: \Re\zeta<0\}$ ordered from left to right. Each of these contours goes from $\infty e^{-\ii 2\pi/3}$ to $\infty e^{\ii 2\pi/3}$. Similarly, the contours $C_{m,\RR}^{\inn},\cdots,C_{2,\RR}^{\inn},C_{1,\RR},C_{2,\RR}^{\out},\cdots, C_{m,\RR}^{\out}$ are contours on the right half plane $\{\zeta\in\complexC: \Re\zeta>0\}$ ordered from right to left. Each of these contours goes from $\infty e^{-\ii  \pi/5}$ to $\infty e^{\ii  \pi/5}$. See Figure \ref{fig:contours2} for an illustration of the contours when $m=2$.
\end{prop}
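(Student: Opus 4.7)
The plan is to derive the tail formula \eqref{eq:KPZ_tail_prob} from the joint cumulative distribution function formula of the narrow-wedge KPZ fixed point established in \cite{Liu19}, which already has a structurally similar multi-contour, Cauchy-determinantal expansion. Writing
\[
F_\le(\bh) := \prob\left(\bigcap_{\ell=1}^m \{\rH(x_\ell,t_\ell) \le h_\ell\}\right),
\]
the starting point is the inclusion-exclusion identity
\[
\prob\left(\bigcap_{\ell=1}^m \{\rH(x_\ell,t_\ell) \ge h_\ell\}\right) = \sum_{S\subseteq\{1,\ldots,m\}} (-1)^{|S|} F_\le(\bh^{(S)}),
\]
where $\bh^{(S)}$ agrees with $\bh$ on coordinates in $S$ and equals $+\infty$ elsewhere (with the empty-$S$ term contributing $1$).

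After substituting the \cite{Liu19} expansion for each $F_\le(\bh^{(S)})$, the proof reduces to an algebraic manipulation of the integrand. The critical observation is that $F_\ell(\zeta)$ in \eqref{eq:def_F} depends on $h_\ell, h_{\ell-1}$ only through the increment $h_\ell-h_{\ell-1}$, and the contours can be arranged so that $F_\ell$ decays super-exponentially on the left half plane and $1/F_\ell$ decays super-exponentially on the right. Consequently, sending an individual $h_\ell \to +\infty$ forces certain integrals to vanish outright, but if the contour is first deformed across the singularities at $0$ and $1$ of the $z_{\ell-1}$-factor $1/(z_{\ell-1}(1-z_{\ell-1}))$, one picks up a residue contribution that survives. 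The weighted pair $\frac{1}{1-z_{\ell-1}}\int_{C^{\inn}_{\ell,\LL/\RR}} - \frac{z_{\ell-1}}{1-z_{\ell-1}}\int_{C^{\out}_{\ell,\LL/\RR}}$ appearing in \eqref{eq:def_dD_bn} should emerge as the net effect of reorganizing the $S$-sum according to which indices survive in the limit, with the explicit $z_{\ell-1}$ weights coming precisely from the residue at $z_{\ell-1}=1$ in the CDF formula.

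The main obstacle is the combinatorial resummation. I would proceed by induction on $m$. The base $m=1$ reduces to the identity $\prob(\rH(x_1,t_1)\ge h_1)=1-F_\le(h_1)$, which matches \eqref{eq:KPZ_tail_prob} for $m=1$ after a single residue computation. For the inductive step, instead of handling all $2^m$ subsets at once, I would fix the last position $\ell=m$ and split the sum into the two cases $m\in S$ and $m\notin S$; integrating the $z_{m-1}$ variable first and deforming it through the poles at $0$ and $1$ generates exactly the inner/outer split at level $m$, while the $m\notin S$ portion telescopes into the extension of the inductive hypothesis from $m-1$ to $m$ nontrivial indices. A parallel route, following the tail-probability computations in \cite{Liu-Wang22} and \cite{baik2024pinchedup}, is to manipulate the \cite{Liu19} series directly without explicit inclusion-exclusion, through iterated contour deformation in each $z_\ell$ and careful residue bookkeeping. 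In either approach, the delicate technical point is justifying the contour deformations and the exchange of the $h_\ell\to+\infty$ limits with the absolutely convergent $n_\ell$-summation; this can be controlled by the super-exponential bounds on $F_\ell$ along the chosen contours, combined with the Cauchy-determinant estimate of Lemma \ref{lm:bounds_Cauchy_det}, so that the summation-integration-limit exchanges are all legitimate.
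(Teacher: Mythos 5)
There is a genuine gap. Your plan rests on the unproven claim that the $2^m$-term inclusion--exclusion sum, after sending $h_\ell\to+\infty$ for $\ell\notin S$ inside the joint CDF formula of \cite{Liu19}, can be reorganized into the right-hand side of \eqref{eq:KPZ_tail_prob}, and the mechanism you propose for this is off: the weighted pairs $\frac{1}{1-z_{\ell-1}}\int_{C_{\ell,\LL}^{\inn}}-\frac{z_{\ell-1}}{1-z_{\ell-1}}\int_{C_{\ell,\LL}^{\out}}$ (and their right-plane analogues) do not ``emerge from residues at $z_{\ell-1}=1$'' --- they are already built into the pre-limit formulas of \cite{Liu19,Liu-Wang22}, exactly as in $\dD_{\bn}$ in \eqref{eq:def_dD_bn}. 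What actually distinguishes the tail formula from the CDF-type formula is the summation range: $n_\ell\ge 1$ for every $\ell$ with the sign $(-1)^m$, versus $n_\ell\ge 0$. The paper's proof starts from the mixed-event formula of \cite[Equation (2.4)]{Liu-Wang22} for $\prob\bigl(\bigcap_{\ell=1}^{m-1}\{\rH(x_\ell,t_\ell)\ge h_\ell\}\cap\{\rH(x_m,t_m)\le h_m\}\bigr)$, which is the same series as \eqref{eq:def_dD_bn} summed over $n_\ell\ge 0$, and then uses the key observation missing from your proposal: every term with $n_\ell=0$ for some $\ell\le m-1$ vanishes because the corresponding $z_\ell$-integral is zero upon deforming the $z_\ell$-contour to infinity. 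Hence the series splits into the $n_m=0$ block (the $(m-1)$-point expression) and the $n_m\ge1$ block (the desired $m$-point expression with a minus sign); combining with $\prob(A\cap\{\rH(x_m,t_m)\le h_m\})=\prob(A)-\prob(A\cap\{\rH(x_m,t_m)\ge h_m\})$ and inducting on $m$, with base case the classical contour formula for $1-\FGUE$, finishes the proof in a few lines.

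Your inductive split of the subset sum according to $m\in S$ versus $m\notin S$ is, after resummation, nothing but the probability identity above, so the skeleton is fine; but to close the induction you still need an explicit integral formula for the mixed event, which you never derive. Obtaining it by taking $h_\ell\to+\infty$ termwise in the CDF formula works only for the last index under $\prec$: for an interior index $\ell$, the parameter $h_\ell$ enters through both increments $h_\ell-h_{\ell-1}$ and $h_{\ell+1}-h_\ell$, so the integrand carries factors such as $e^{-h_\ell\xi^{(\ell+1)}_j}$ and $e^{+h_\ell\eta^{(\ell+1)}_j}$ with $\Re\xi^{(\ell+1)}_j<0<\Re\eta^{(\ell+1)}_j$, which blow up as $h_\ell\to+\infty$; the naive limit--sum--integral exchange you invoke (via super-exponential decay and Lemma \ref{lm:bounds_Cauchy_det}) therefore fails for general subsets $S$, and repairing it amounts to re-deriving the mixed formula of \cite{Liu-Wang22} that the paper simply quotes. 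As written, the proposal is a plausible plan whose central identity is asserted rather than proved.
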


   \begin{figure}
    \centering
    \begin{tikzpicture}[scale=1.8, decoration={
  markings,
  mark=at position 0.75 with {\arrow{>}}}
  ] 
  \draw[-latex] (-3,0) -- (3,0) node[right] {$\Re$};
  \draw[-latex] (0,-2) -- (0,2) node[above] {$\Im$};

  \draw[postaction={decorate},blue,thick] plot[smooth] coordinates {(-2.5,-1) (-2,-0.5) (-1.5,0) (-2,0.5) (-2.5,1)};
  \draw[postaction={decorate},blue,thick] plot[smooth] coordinates {(-2,-1) (-1.5,-0.5) (-1,0) (-1.5,0.5) (-2,1)};
  \draw[postaction={decorate},blue,thick] plot[smooth] coordinates {(-1.5,-1) (-1,-0.5) (-0.5,0) (-1,0.5) (-1.5,1)};

  \draw[postaction={decorate},red,thick] plot[smooth] coordinates {(1.5,-0.5) (1,-0.25) (0.5,0) (1,0.25) (1.5,0.5)};
  \draw[postaction={decorate},red,thick] plot[smooth] coordinates {(2,-0.5) (1.5,-0.25) (1,0) (1.5,0.25) (2,0.5)};
  \draw[postaction={decorate},red,thick] plot[smooth] coordinates {(2.5,-0.5) (2,-0.25) (1.5,0) (2,0.25) (2.5,0.5)};
  
  \node at (-2.75,1.2) {$C_{2,\LL}^{\inn}$};
  \node at (-2.0,1.2) {$C_{1,\LL}$};
  \node at (-1.25,1.2) {$C_{2,\LL}^\out$};

  \node at (1.25,0.6) {$C_{2,\RR}^\out$};
  \node at (2.0,0.6) {$C_{1,\RR}$};
  \node at (2.75,0.6) {$C_{2,\RR}^{\inn}$};
\end{tikzpicture}
    \caption{Illustration of the contours in $\dD_{\bn}(\bh;\bz)$ when $m=2$.}
    \label{fig:contours2}
\end{figure}

\begin{proof}[Proof of Proposition \ref{prop:KPZ_tail_prob}]
We will need a variation of the multi-dimensional distribution function formula in \cite{Liu19} shown below. This variation was first explicitly written down in \cite[Equation (2.4)]{Liu-Wang22}. 
\begin{equation}
\label{eq:aux_01}
\begin{split}
&\prob\left(\bigcap_{\ell=1}^{m-1} \{ \rH(x_\ell, t_\ell) \ge h_\ell \} \bigcap\{ \rH(x_m,t_m) \le h_m\}\right)\\
&=(-1)^{m-1}\oint_{>1} \cdots \oint_{>1}\sum_{\substack{n_\ell\ge 0\\ \ell=1,\cdots,m}}\frac{1}{(n_1!\cdots n_{m}!)^2} \dD_{\bn}(\bh;\bz) \prod_{\ell=1}^{m-1}\ddbar{z_\ell}{z_\ell(1-z_\ell)}.
\end{split}
\end{equation}
Note the summation allows some $n_\ell$ to be zero. The function $\dD_{\bn}(\bh;\bz)$ is defined in \eqref{eq:def_dD_bn}\footnote{The original formula has the usual choice of contours in the definition of $\dD_{\bn}(\bh;\bz)$ where the angles going to infinity are $\pm 2\pi/3$ (for $C_{\ell,\LL}$ contours) and $\pm \pi/3$ (for $C_{\ell,\RR}$ contours). When some times become equal, then the contours need to be bent according to the order of $(x_\ell,\tau_\ell)$ under $\prec$ to ensure the decay of the integrand (see the discussions after the Definition 2.25 in \cite{Liu19}). In this paper, we bend the contours at the beginning so that the integral is well defined in the definition of $\dD_{\bn}(\bh;\bz)$ for all ordered points $(x_\ell,t_\ell)$ under the ordering $\prec$.}, and when some $n_\ell=0$ we should view the corresponding empty product as $1$. One further observation is that the summand in \eqref{eq:aux_01} vanishes if any $1\le \ell\le m-1$ satisfies $n_\ell=0$ since the $z_{\ell}$ integral equals to zero (by deforming the $z_\ell$ contour to infinity). This observation was also made in \cite[Section 3.2]{Liu-Wang22} and \cite[Lemma 2.3]{baik2024pinchedup}. Thus the right hand side of \eqref{eq:aux_01} satisfies the following relation
\begin{equation}
\label{eq:aux_02}
\begin{split}
&(-1)^{m-1}\oint_{>1} \cdots \oint_{>1}\sum_{\substack{n_\ell\ge 0\\ \ell=1,\cdots,m}}\frac{1}{(n_1!\cdots n_{m}!)^2} \dD_{\bn}(\bh;\bz) \prod_{\ell=1}^{m-1}\ddbar{z_\ell}{z_\ell(1-z_\ell)} \\
&= (-1)^{m-1}\oint_{>1} \cdots \oint_{>1}\sum_{\substack{n_\ell\ge 1\\ \ell=1,\cdots,m-1}}\frac{1}{(n_1!\cdots n_{m-1}!)^2} \dD_{\hat\bn}(\hat\bh;\hat\bz;(x_1,t_1),\cdots,(x_{m-1},t_{m-1})) \prod_{\ell=1}^{m-2}\ddbar{z_\ell}{z_\ell(1-z_\ell)} \\
&\quad-(-1)^{m}\oint_{>1} \cdots \oint_{>1}\sum_{\substack{n_\ell\ge 1\\ \ell=1,\cdots,m}}\frac{1}{(n_1!\cdots n_{m}!)^2} \dD_{\bn}(\bh;\bz) \prod_{\ell=1}^{m-1}\ddbar{z_\ell}{z_\ell(1-z_\ell)}, 
\end{split}
\end{equation}
where the first sum comes from the case when $n_m=0$ and the second sum comes from the case when $n_m\ge 1$, and the notations $\hat\bn,\hat \bh,\hat \bz$ represent the vectors $\bn, \bh, \bz$ after removing the last coordinate respectively. Note that the left hand side of \eqref{eq:aux_01} satisfies the simple relation
\begin{equation}
\begin{split}
\label{eq:aux_last}
&\prob\left(\bigcap_{\ell=1}^{m-1} \{ \rH(x_\ell, t_\ell) \ge h_\ell \} \bigcap\{ \rH(x_m,t_m) \le h_m\}\right)\\
&= \prob\left(\bigcap_{\ell=1}^{m-1} \{ \rH(x_\ell, t_\ell) \ge h_\ell \} \}\right) -\prob\left(\bigcap_{\ell=1}^{m} \{ \rH(x_\ell, t_\ell) \ge h_\ell \}\right).
\end{split}
\end{equation}
Compare the two equations \eqref{eq:aux_02} and \eqref{eq:aux_last}. And note that Proposition \ref{prop:KPZ_tail_prob} is a well known formula for $1-\FGUE$ when $m=1$, see \cite[Equation (23)]{Liu19}  for an example. The case for general $m$ then follows by a simple induction on $m$ using \eqref{eq:aux_02} and \eqref{eq:aux_last}.
\end{proof}

\bigskip

We also need a formula of the derivative of the tail probability formula. Note that Lemma \ref{lm:exchange_differentiation_sum} allows us to take the derivative inside the integral and we obtain the following proposition. 
\begin{prop}
    \label{prop:derivative_multipoint_tail}
    Assuming the same notations as in Proposition \ref{prop:KPZ_tail_prob}, we have
    \begin{equation}
        \frac{\partial}{\partial h_k} \prob\left(\bigcap_{\ell=1}^m \{ \rH(x_\ell, t_\ell) \ge h_\ell \}\right)
        = (-1)^m\oint_{>1} \cdots \oint_{>1} \sum_{\substack{n_\ell\ge 1\\ \ell=1,\cdots,m}}\frac{1}{(n_1!\cdots n_{m}!)^2} \frac{\partial}{\partial h_k}\dD_{\bn}(\bz,\bh) \prod_{\ell=1}^{m-1}\ddbar{z_\ell}{z_\ell(1-z_\ell)},
    \end{equation}
    for any $1\le k\le m$, where $\frac{\partial}{\partial h_k}\dD_{\bn}(\bz,\bh)$ has the same formula as $\dD_{\bn}(\bz,\bh)$ in \eqref{eq:def_dD_bn}, except that we need to replace $\prod_{i_\ell=1}^{n_\ell}\frac{F_\ell(\xi_{i_\ell}^{(\ell)})}{F_\ell(\eta_{i_\ell}^{(\ell)})}$ by $\frac{\partial}{\partial h_k}\prod_{i_\ell=1}^{n_\ell}\frac{F_\ell(\xi_{i_\ell}^{(\ell)})}{F_\ell(\eta_{i_\ell}^{(\ell)})}$.
\end{prop}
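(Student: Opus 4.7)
The plan is to apply Lemma \ref{lm:exchange_differentiation_sum} to the combined integral/summation appearing on the right hand side of \eqref{eq:KPZ_tail_prob}, with $h_k$ playing the role of the parameter $y$. The key observation is that $h_k$ enters the integrand $\dD_{\bn}(\bh;\bz)$ only through the exponential factors $F_k$ (and $F_{k+1}$, when $k\le m-1$), since by \eqref{eq:def_F} each $F_\ell(\zeta)$ is an exponential in $\zeta$ whose coefficient of $\zeta$ is a linear function of $\bh$. More precisely,
\begin{equation*}
\frac{\partial}{\partial h_k}\prod_{\ell=1}^m\prod_{i_\ell=1}^{n_\ell}\frac{F_\ell(\xi_{i_\ell}^{(\ell)})}{F_\ell(\eta_{i_\ell}^{(\ell)})}
= g_k(\bxi,\bseta)\cdot \prod_{\ell=1}^m\prod_{i_\ell=1}^{n_\ell}\frac{F_\ell(\xi_{i_\ell}^{(\ell)})}{F_\ell(\eta_{i_\ell}^{(\ell)})},
\end{equation*}
where $g_k$ is a polynomial of degree one in the integration variables obtained from $\sum_{i_k}(\xi_{i_k}^{(k)}-\eta_{i_k}^{(k)})$ (and, if $k<m$, minus the analogous sum for level $k+1$). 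Thus the required structure ``$F(W)e^{h_k g(W)}$'' of Lemma \ref{lm:exchange_differentiation_sum} is available if we view the combined contour integrations (over all $\xi$'s, $\eta$'s, $\bz$) together with the summation over $\bn$ as a single complex measure $\mu$.

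Next, I would verify the two integrability hypotheses of Lemma \ref{lm:exchange_differentiation_sum} uniformly for $h_k$ in a small interval $(h_k-\delta,h_k+\delta)$. For a fixed admissible choice of contours lying at distance at least $d>0$ from each other and from $\{\pm 1\}$, Lemma \ref{lm:bounds_Cauchy_det} bounds each Cauchy determinant $\rC(\bxi^{(\ell)}\bunion\bseta^{(\ell+1)};\bseta^{(\ell)}\bunion \bxi^{(\ell+1)})$ by $C^{n_\ell+n_{\ell+1}}n_\ell^{n_\ell/2}n_{\ell+1}^{n_{\ell+1}/2}$ times a power of $d^{-1}$. The super-exponential decay of $F_\ell$ along the $C_{\ell,\LL}$-contours and of $1/F_\ell$ along the $C_{\ell,\RR}$-contours ensures that for $h_k$ in a compact neighborhood the single-variable integrals $\int |F_\ell(\xi)|(1+|\xi|)|d\xi|$ and $\int |F_\ell(\eta)|^{-1}(1+|\eta|)|d\eta|$ are finite and can be bounded uniformly. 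Combined with the factorials in $\frac{1}{(n_1!\cdots n_m!)^2}$, these bounds are exactly analogous to those given in the discussion right after Definition \ref{defn:FT} that make $\FT$ well defined, and show the absolute convergence of the series and iterated integrals, uniformly in a neighborhood of $h_k$. The linear factor $g_k(\bxi,\bseta)$ only contributes an extra $(1+|\xi|+|\eta|)$ inside the integrand, which is harmless because of the super-exponential decay.

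With these uniform bounds in hand, Lemma \ref{lm:exchange_differentiation_sum} applies directly: we may pull the $h_k$-differentiation inside the sum over $\bn$, inside the $\bz$-integrals, and inside all $\xi^{(\ell)}_{i_\ell}$ and $\eta^{(\ell)}_{i_\ell}$ integrals, giving exactly the claimed formula. I expect the main (though routine) obstacle to be bookkeeping the uniform bound on the full integrand-times-summand when the extra polynomial factor $g_k$ is inserted; this is essentially the same convergence argument that justifies Definition \ref{defn:FT}, adapted to the KPZ-fixed-point contours $C_{\ell,\LL/\RR}^{\inn/\out}$ in place of the $\Gamma$-contours and accounting for the extra slowly-growing factor $g_k$.
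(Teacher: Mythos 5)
Your proposal is correct and follows essentially the same route as the paper, which justifies this proposition precisely by invoking Lemma \ref{lm:exchange_differentiation_sum} to pass $\partial/\partial h_k$ inside the sum over $\bn$ and the contour integrals of \eqref{eq:KPZ_tail_prob}; your verification of the integrability hypotheses via Lemma \ref{lm:bounds_Cauchy_det} and the super-exponential decay of the $F_\ell$ (with the harmless linear factor from differentiating the exponentials) is exactly the intended argument, only spelled out in more detail than the paper records.
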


\subsection{Proof of Proposition \ref{prop:main}}
\label{sec:proof_proposition_main_sketch}

Now we prove Proposition \ref{prop:main}. When $m=1$, it follows from the known result of the right tail estimate of the GUE Tracy-Widom distribution. In fact, it is known that the one point distribution function of $\rH$ is given by the GUE Tracy-Widom distribution
\begin{equation}
    \label{eq:KPZ_GUE}
\prob\left( \rH(x,t) \le h \right) =\FGUE\left(\frac{h}{t^{1/3}} + \frac{x^2}{t^{4/3}}\right).
\end{equation}
It is also well known that, see \cite[Equation (25)]{BBD08} and \cite[Equation (3.4)]{liu2022geodesic} for example, the GUE Tracy-Widom distribution satisfies the following upper tail estimate
\begin{equation}
\label{eq:upper_tail_FGUE}
1- \FGUE(L) = \frac{e^{-\frac{4}{3}L^{3/2}}}{16\pi L^{3/2}} \left(1+ O(L^{-3/2})\right),\quad  \FGUE'(L) = \frac{e^{-\frac{4}{3}L^{3/2}}}{8\pi L}\left(1+ O(L^{-3/2})\right)
\end{equation}
as $L\to\infty$. Combining the above two results, we immediately obtain Proposition \ref{prop:main} when $m=1$.

Throughout the proof below, we always assume  $m\ge 2$.

We write, using Proposition \ref{prop:KPZ_tail_prob} and Proposition \ref{prop:derivative_multipoint_tail},
\begin{equation}
\label{eq:pre_limit_prob}
\begin{split}
&16\pi L^{3/2} e^{\frac{4}{3}L^{3/2}}\prob \left( \bigcap_{\ell=1}^m \left\{ \HH(\alpha_\ell,\tau_\ell)\ge \beta_\ell  \right\}\right)\\
&=  (-1)^m\oint_{>1} \cdots \oint_{>1} \sum_{\substack{n_\ell\ge 1\\ \ell=1,\cdots,m}}\frac{1}{(n_1!\cdots n_{m}!)^2} 16\pi L^{3/2} e^{\frac{4}{3}L^{3/2}}\dD_{\bn}(\bh;\bz) \prod_{\ell=1}^{m-1}\ddbar{z_\ell}{z_\ell(1-z_\ell)}
\end{split}
\end{equation}
and 
\begin{equation}
    \label{eq:pre_limit_prob_derivative}
    \begin{split}
    &16\pi L^{3/2} e^{\frac{4}{3}L^{3/2}}\frac{\partial}{\partial \beta_k}\prob \left( \bigcap_{\ell=1}^m \left\{ \HH(\alpha_\ell,\tau_\ell)\ge \beta_\ell  \right\}\right)\\
    &=  (-1)^m\oint_{>1} \cdots \oint_{>1} \sum_{\substack{n_\ell\ge 1\\ \ell=1,\cdots,m}}\frac{1}{(n_1!\cdots n_{m}!)^2} 16\pi L^{3/2} e^{\frac{4}{3}L^{3/2}}\frac{\partial}{\partial \beta_k}\dD_{\bn}(\bh;\bz) \prod_{\ell=1}^{m-1}\ddbar{z_\ell}{z_\ell(1-z_\ell)}
    \end{split}
    \end{equation}
with the notations the same as in Proposition \ref{prop:KPZ_tail_prob} but with the parameters chosen as follows
\begin{equation}
\label{eq:parameter_scale}
x_\ell= \alpha_\ell L^{-1},\quad t_\ell = 1+\tau_\ell L^{-3/2},\quad h_\ell = L + \beta_\ell L^{-1/2},\qquad \ell=1,\cdots,m.
\end{equation}

It turns out that each term in the summand converges as $L\to\infty$. More precisely, for $n_1>1$ we define
\begin{equation}
\pD_{\bn}(\bbeta;\bz)=\pD_{\bn}(\bbeta;\bz;(\alpha_1,\tau_1),\cdots,(\alpha_m,\tau_m)):=0,
\end{equation}
and for $n_1=1$ we define
\begin{equation}
\label{eq:def_pD}
\begin{split}
             \pD_{\bn}(\bbeta;\bz)
             =&\pD_{\bn}(\bbeta;\bz;(\alpha_1,\tau_1),\cdots,(\alpha_m,\tau_m))\\
             :=&2\prod_{\ell=1}^{m-1} (1-z_\ell)^{n_\ell}(1-z_\ell^{-1})^{n_{\ell+1}}\\
             & \cdot \prod_{\ell=2}^{m} \prod_{i_\ell=1}^{n_\ell} \left(\frac{1}{1-z_{\ell-1}}\int_{\hat\Gamma_{\ell,\LL}^\inn}\ddbar{u_{i_\ell}^{(\ell)}}{}-\frac{z_{\ell-1}}{1-z_{\ell-1}}\int_{\hat\Gamma_{\ell,\LL}^\out}\ddbar{u_{i_\ell}^{(\ell)}}{}\right) \int_{\hat\Gamma_{\LL}}e^{u^2}\frac{\rd u}{\sqrt{\pi}\ii}\\
             &\cdot \prod_{\ell=2}^{m} \prod_{i_\ell=1}^{n_\ell} \left(\frac{1}{1-z_{\ell- 
            1}}\int_{\hat\Gamma_{\ell,\RR}^\inn}\ddbar{v_{i_\ell}^{(\ell)}}{}-\frac{z_{\ell-1}} 
        {1-z_{\ell-1}}\int_{\hat\Gamma_{\ell,\RR}^\out}\ddbar{v_{i_\ell}^{(\ell)}}{}\right) \int_{\ii\realR}e^{v^2}\frac{\rd v}{\sqrt{\pi}\ii}\\ 
        \\
       &\quad \rC(-1\bunion V^{(2)};1\bunion U^{(2)}) \cdot \prod_{\ell=2}^{m} \rC(U^{(\ell)}\bunion V^{(\ell+1)};V^{(\ell)}\bunion U^{(\ell+1)}) \cdot \prod_{\ell=2}^{m} \prod_{i_\ell=1}^{n_\ell} \frac{ f_{\ell}(u_{i_\ell}^{(\ell)})}{ f_{\ell}(v_{i_\ell}^{(\ell)})} \cdot \frac{f_1(-1)}{f_1(1)} 
        \end{split}
\end{equation}
where the functions $f_\ell$ are defined in \eqref{eq:def_f}, and for notation convenience we set $n_{m+1}=0$ and $U^{(m+1)}=V^{(m+1)}=\emptyset$. The integration contours are chosen specifically in the following way. Let $\hat\Gamma_{\LL}:= \{r e^{\pm\ii2\pi/3}: r\ge 0\}$ with orientation from $\infty e^{-\ii2\pi/3}$ to $\infty e^{\ii2\pi/3}$, and
\begin{equation}
\hat\Gamma_{\RR}:= \left\{y\ii: -\cot \frac{2\pi}{5} <y <\cot \frac{2\pi}{5} \right\} \cup \left\{ \ii\cdot \cot\frac{2\pi}{5} + re^{\ii \pi/5}: r\ge 0\right\} \cup \left\{ -\ii\cdot \cot \frac{2\pi}{5} + re^{-\ii \pi/5}: r\ge 0\right\}
\end{equation}
with orientation from $\infty e^{-\ii \pi/5}$ to $\infty e^{\ii \pi/5}$. Then we define
\begin{equation}
\hat\Gamma_{\ell,\LL}^\inn = -1-a_\ell +\hat\Gamma_{\LL}, \quad \hat\Gamma_{\ell,\LL}^\out = -1+a_\ell +\hat\Gamma_{\LL}, 2\le \ell\le m,
\end{equation}
and
\begin{equation}
\hat\Gamma_{\ell,\RR}^\inn = 1+a_\ell +\hat\Gamma_{\RR}, \quad \hat\Gamma_{\ell,\RR}^\out =  1-a_\ell +\hat\Gamma_{\RR}, 2\le \ell\le m,
\end{equation}
where $a_2,\cdots,a_m$ are constants satisfying $0<a_2<\cdots<a_m<1$. Note that the contours $\hat\Gamma_{\ell,\diamond}^{\star}$, $2\le \ell\le m$, $\diamond\in\{\LL,\RR\}$, $\star\in\{\inn,\out\}$ can also be viewed as the contours $\Gamma_{\ell,\diamond}^{\star}$ appeared in the Definition \ref{defn:FT} for the function $\FT$, with a more specific setting here to simplify our asymptotic analysis later. We also note that 
\begin{equation}
    \int_{\hat\Gamma_{\LL}}e^{u^2}\frac{\rd u}{\sqrt{\pi}\ii}=\int_{\ii\realR}e^{v^2}\frac{\rd v}{\sqrt{\pi}\ii}=1.
\end{equation}
Thus we have, by comparing  \eqref{eq:def_pD} with \eqref{eq:def_cdnz},
\begin{equation}
    \pD_{\bn}(\bbeta;\bz) =\cD_{\bn}(\bbeta;\bz)
\end{equation}
when $n_1=1$. Proposition \ref{prop:KPZ_tail_prob} hence follows from the following two lemmas, Proposition \ref{prop:differentiability}, and the dominated convergence theorem.

\begin{lm}
\label{lm:convergence_ptwise}
Assume that $\bn,\bz,\balpha,\btau,\bbeta$ are fixed, and $\bx,\bt,\bh$ depend on $L$ as in \eqref{eq:parameter_scale}. Then we have 
\begin{equation}
    \label{eq:aux_12_28_01}
16\pi L^{3/2} e^{\frac{4}{3}L^{3/2}}\dD_{\bn}(\bh,\bz;(x_1,t_1),\cdots,(x_m,t_m)) \to \pD_{\bn}(\bbeta;\bz;(\alpha_1,\tau_1),\cdots,(\alpha_m,\tau_m))
\end{equation}
and
\begin{equation}
    \label{eq:aux_12_28_02}
    16\pi L^{3/2} e^{\frac{4}{3}L^{3/2}}\frac{\partial}{\partial \beta_k}\dD_{\bn}(\bh,\bz;(x_1,t_1),\cdots,(x_m,t_m)) \to \frac{\partial}{\partial \beta_k}\pD_{\bn}(\bbeta;\bz;(\alpha_1,\tau_1),\cdots,(\alpha_m,\tau_m))
\end{equation}
as $L\to \infty$.
\end{lm}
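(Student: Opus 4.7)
The plan is to perform a steepest descent analysis on the contour integral \eqref{eq:def_dD_bn}. The heuristic driver is that $F_1(\zeta)=e^{-t_1\zeta^3/3+x_1\zeta^2+h_1\zeta}$ has leading exponent $-\zeta^3/3+L\zeta$ with saddles at $\zeta=\pm\sqrt{L}$, so I rescale $\zeta=\sqrt{L}w$ throughout. A direct substitution using \eqref{eq:parameter_scale} shows that $F_\ell(\sqrt{L}w)=f_\ell(w)$ \emph{exactly} for each $\ell\ge 2$, whereas $F_1(\sqrt{L}w)=e^{L^{3/2}(w-w^3/3)}f_1(w)$. Thus only the level-$1$ variables carry divergent exponentials, and the higher-level variables convert directly into the integrand of $\pD_\bn$ once the contours $C_{\ell,\diamond}^{\star}$ are deformed (without crossing singularities) to $\hat\Gamma_{\ell,\diamond}^{\star}$.

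For the level-$1$ analysis, I zoom in further with $\xi^{(1)}_{i_1}=-\sqrt{L}+L^{-1/4}u_{i_1}$ and $\eta^{(1)}_{i_1}=\sqrt{L}+L^{-1/4}v_{i_1}$, where after contour deformation $u_{i_1}\in\hat\Gamma_{\LL}$ and $v_{i_1}\in\ii\realR$. Expanding the cubic phase near the saddles gives $L^{3/2}(w-w^3/3)=-\tfrac{2}{3}L^{3/2}+u_{i_1}^2+o(1)$ at $w=-1$ (and the analogous $+\tfrac{2}{3}L^{3/2}-v_{i_1}^2$ for $-\phi$ at $w=1$). For $n_1=1$, combining the level-$1$ Gaussian integrals $\int_{\hat\Gamma_{\LL}}e^{u^2}du/(\sqrt{\pi}\ii)=\int_{\ii\realR}e^{v^2}dv/(\sqrt{\pi}\ii)=1$, the rescaled differentials $d\xi^{(1)}d\eta^{(1)}=L^{-1/2}du^{(1)}dv^{(1)}$, the Cauchy value $\rC(\eta^{(1)}_1;\xi^{(1)}_1)=1/(2\sqrt{L})+O(L^{-5/4})$, and the factorization of $\rC(\xi^{(1)}_1\sqcup\bseta^{(2)};\eta^{(1)}_1\sqcup\bxi^{(2)})$ at the saddle locations via \eqref{eq:id_rC}, the leading contribution equals $L^{-3/2}e^{-4L^{3/2}/3}/(16\pi)$ times the integrand of $\pD_\bn$, with the factor $\rC(-1\sqcup V^{(2)};1\sqcup U^{(2)})$ and the cross-ratios $\prod(1+v^{(2)}_{i_2})(1-u^{(2)}_{i_2})/((1+u^{(2)}_{i_2})(1-v^{(2)}_{i_2}))$ emerging naturally from the collapse of $\xi^{(1)}_1,\eta^{(1)}_1$ to $\mp\sqrt{L}$ inside the cross-terms. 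The prefactor $16\pi L^{3/2}e^{4L^{3/2}/3}$ cancels this exactly, yielding \eqref{eq:aux_12_28_01}. When $n_1\ge 2$, the $n_1$ independent saddle evaluations at level-$1$ produce a factor $e^{-4n_1L^{3/2}/3}$, which cannot be compensated by the single prefactor $e^{4L^{3/2}/3}$; so these terms vanish in the limit, in agreement with the convention $\pD_\bn\equiv 0$ for $n_1>1$.

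The main obstacle is the careful bookkeeping of the chain of Cauchy determinants $\rC(\bseta^{(1)};\bxi^{(1)})\cdot\prod_{\ell=1}^m\rC(\bxi^{(\ell)}\sqcup\bseta^{(\ell+1)};\bseta^{(\ell)}\sqcup\bxi^{(\ell+1)})$ under rescaling, since each Cauchy factor picks up its own power of $L$ and these must combine with the differentials $d\xi^{(\ell)}d\eta^{(\ell)}=L\,du^{(\ell)}dv^{(\ell)}$ (for $\ell\ge 2$) and with the level-$1$ Gaussian factor to yield precisely the power $L^{-3/2}$ that cancels the prefactor. The identity \eqref{eq:id_rC} is the essential tool for separating the level-$1$ contribution from the higher-level part. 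Pointwise convergence in the saddle neighborhood follows from direct computation; outside the saddle neighborhood, the super-exponential decay of $F_1(\xi)/F_1(\eta)$ along the steepest descent directions makes the tail contributions negligible, while Lemma \ref{lm:bounds_Cauchy_det} combined with the super-exponential decay of the $f_\ell$ factors at levels $\ell\ge 2$ justifies dominated convergence in the remaining integrals.

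For the derivative statement \eqref{eq:aux_12_28_02}, by Proposition \ref{prop:derivative_multipoint_tail} the differentiation $\partial/\partial\beta_k$ commutes with the integrals and acts only on $\prod_{i_\ell}F_\ell(\xi^{(\ell)}_{i_\ell})/F_\ell(\eta^{(\ell)}_{i_\ell})$. Writing $\partial/\partial\beta_k=L^{-1/2}\partial/\partial h_k$ and observing that $\partial F_\ell/\partial h_k$ brings down a factor of $\pm\xi^{(\ell)}_{i_\ell}$ or $\mp\eta^{(\ell)}_{i_\ell}$ (per \eqref{eq:def_F}), under the rescaling $\xi=\sqrt{L}u$ this extra factor becomes $\sqrt{L}\cdot u$; the $\sqrt{L}$ is exactly absorbed by the $L^{-1/2}$ from $\partial h_k/\partial\beta_k$. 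The resulting integrand then has the same overall $L$-scaling and saddle structure as $\dD_\bn$, and the same saddle-point analysis yields $\partial\pD_\bn/\partial\beta_k$ as the limit.
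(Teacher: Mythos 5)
Your proposal is correct and takes essentially the same route as the paper's proof: rescale the level-$\ell\ge 2$ variables by $\sqrt{L}$ (so $F_\ell$ becomes $f_\ell$ exactly), zoom in at the saddles $\mp\sqrt{L}$ with scale $L^{-1/4}$ for the level-$1$ variables, track the powers of $L$ from the Cauchy determinants and the differentials so that the prefactor $16\pi L^{3/2}e^{\frac43 L^{3/2}}$ is cancelled for $n_1=1$, kill the $n_1\ge 2$ terms via the uncompensated factor $e^{-\frac43(n_1-1)L^{3/2}}$, and justify the limit by dominated convergence using Lemma \ref{lm:bounds_Cauchy_det} together with bounds on $F_1$ (the paper's Lemma \ref{lm:bound_F1}), with the derivative case handled by the same scaling after differentiating inside the integral. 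The only presentational differences are that the paper keeps the level-$1$ right contour as $\sqrt{L}+L^{1/2}\hat\Gamma_\RR$ (a long vertical segment with $\pi/5$ wings whose contribution is shown to be negligible) rather than a full vertical line, and that for $k=1$ the derivative brings down $\xi^{(1)}_{i_1}-\eta^{(1)}_{i_1}\approx -2\sqrt{L}$, so after the $L^{-1/2}$ rescaling the level-$1$ contribution is the constant $-2$ per pair (matching $\partial_{\beta_1}$ of $f_1(-1)/f_1(1)$ in $\pD_{\bn}$) rather than a rescaled variable — neither point affects your argument.
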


\begin{lm}
\label{lm:uniform_bound}
For sufficiently large $L$, we have
\begin{equation}
\label{eq:aux_11_28_05}
\left| L^{3/2} e^{\frac{4}{3}L^{3/2}}\dD_{\bn}(\bn;\bz) \right| \le  C^{n_1+\cdots+n_m}\prod_{\ell=1}^{m-1} \frac{(1+|z_\ell|)^{n_{\ell+1}}}{|1-z_\ell|^{n_{\ell+1}-n_\ell}|z_\ell|^{n_{\ell+1}}}\cdot \prod_{\ell=1}^m n_\ell^{n_\ell}
\end{equation}
and
\begin{equation}
    \label{eq:aux_12_28}
    \left| L^{3/2} e^{\frac{4}{3}L^{3/2}}\frac{\partial}{\partial \beta_k}\dD_{\bn}(\bn;\bz) \right| \le  C^{n_1+\cdots+n_m}\prod_{\ell=1}^{m-1} \frac{(1+|z_\ell|)^{n_{\ell+1}}}{|1-z_\ell|^{n_{\ell+1}-n_\ell}|z_\ell|^{n_{\ell+1}}}\cdot \prod_{\ell=1}^m n_\ell^{n_\ell}
    \end{equation}
where $C$ is a positive constant independent of $\bz$. 
\end{lm}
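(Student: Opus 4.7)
The strategy is to carry out the steepest descent analysis that yields the pointwise limit in Lemma \ref{lm:convergence_ptwise}, but with explicit tracking of the dependence on $\bn$ and $\bz$. First, I specialize the contours defining $\dD_{\bn}$ to scaled translates of those in $\pD_{\bn}$: set $C_{1,\LL} = -\sqrt{L} + L^{-1/4}\hat\Gamma_\LL$, $C_{1,\RR} = \sqrt{L} + L^{-1/4}\ii\realR$, and $C_{\ell,\LL/\RR}^\star = \mp\sqrt{L} + L^{-1/4}\hat\Gamma_{\ell,\LL/\RR}^\star$ for $\ell \ge 2$ and $\star \in \{\inn,\out\}$. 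This is admissible because the integrand is analytic in the relevant regions and the required nesting and asymptotic directions are preserved. The change of variables $\xi^{(\ell)}_{i_\ell} = -\sqrt{L} + L^{-1/4}u^{(\ell)}_{i_\ell}$, $\eta^{(\ell)}_{i_\ell} = \sqrt{L} + L^{-1/4}v^{(\ell)}_{i_\ell}$ then pulls every integration back to a bounded $\hat\Gamma$-contour and contributes a total Jacobian of $L^{-\frac{1}{2}\sum n_\ell}$.

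Next, I estimate the integrand factor by factor. Expanding $F_1$ about the saddles $\pm\sqrt{L}$ yields
\begin{equation*}
\prod_{i=1}^{n_1}\frac{F_1(\xi^{(1)}_i)}{F_1(\eta^{(1)}_i)} = e^{-\frac{4}{3}L^{3/2}n_1}\left(\frac{f_1(-1)}{f_1(1)}\right)^{n_1}\prod_{i=1}^{n_1}e^{(u^{(1)}_i)^2 - (v^{(1)}_i)^2}\left(1 + O(L^{-3/4})\right),
\end{equation*}
uniformly on compact subsets of the rescaled contours; on the tails the cubic corrections $-L^{-3/4}u^3/3$ and $-L^{-3/4}v^3/3$ have negative real parts on the chosen contour angles ($\pm 2\pi/3$ for $\hat\Gamma_\LL$, $\pm\pi/2$ via $\ii\realR$), securing uniform convergence of the integrals. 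For $\ell \ge 2$ the ratios $F_\ell(\xi)/F_\ell(\eta)$ converge uniformly to $f_\ell(u)/f_\ell(v)$ and are bounded by $C^{n_\ell}$ on the rescaled contours. For the Cauchy determinants, I decompose each $\rC(\bxi^{(\ell)}\bunion\bseta^{(\ell+1)};\bseta^{(\ell)}\bunion\bxi^{(\ell+1)})$ via the identity \eqref{eq:id_rC} into $\rC(\bxi^{(\ell)};\bseta^{(\ell)})\,\rC(\bseta^{(\ell+1)};\bxi^{(\ell+1)})\,P_\ell$, where $P_\ell$ is a cross-level rational factor. The same-level determinants have cross differences $\xi - \eta \approx -2\sqrt{L}$ and scaled same-side differences of order $L^{-1/4}$, so by the explicit formula \eqref{eq:def_rC} together with Lemma \ref{lm:bounds_Cauchy_det} one gets $|\rC(\bxi^{(\ell)};\bseta^{(\ell)})| = O\!\left(L^{-n_\ell^2 + n_\ell/2}\cdot n_\ell^{n_\ell/2}\right)$, while each $|P_\ell| = O\!\left(L^{\frac{3}{2}n_\ell n_{\ell+1}}\right)$.

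Assembling the $L$-powers from the Jacobian, the same-level Cauchy determinants (which appear in pairs $\rC(\bxi^{(\ell)};\bseta^{(\ell)})\rC(\bseta^{(\ell)};\bxi^{(\ell)})$ after combining with the initial $\rC(\bseta^{(1)};\bxi^{(1)})$ and the terminal $\rC(\bxi^{(m)};\bseta^{(m)})$), and the cross-level factors $P_\ell$, the net $L$-exponent is $Q(\bn) + \frac{1}{2}\sum n_\ell$ with $Q(\bn) = -2\sum n_\ell^2 + \frac{3}{2}\sum_{\ell=1}^{m-1} n_\ell n_{\ell+1}$. An elementary computation (substituting $n_\ell = 1 + k_\ell$ with $k_\ell \ge 0$, and using that the tridiagonal matrix underlying $Q$ has all eigenvalues in $[-7/2,-1/2]$) shows $Q(\bn) + \frac{1}{2}\sum n_\ell \le -3/2$ for all admissible $\bn$, with equality only when $\bn = (1,\ldots,1)$. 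Thus the $L^{3/2}$ prefactor cancels the algebraic $L$-powers up to a uniformly bounded factor, and the residual exponential $e^{\frac{4}{3}L^{3/2}}\cdot e^{-\frac{4}{3}L^{3/2}n_1} = e^{-\frac{4}{3}L^{3/2}(n_1-1)}$ is at most $1$. Finally, the $z$-dependent prefactor $\prod(1-z_\ell)^{n_\ell}(1-z_\ell^{-1})^{n_{\ell+1}}$ together with the inner/outer integration weights $(1-z_{\ell-1})^{-1}$ and $z_{\ell-1}(1-z_{\ell-1})^{-1}$ produces the bound \eqref{eq:aux_11_28_05} after absorbing a surplus $(1+|z_\ell|)^{n_{\ell+1}}$ into $C^{\sum n_\ell}$. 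The derivative bound \eqref{eq:aux_12_28} follows from the same analysis because $\partial/\partial\beta_k$ multiplies the integrand by $L^{-1/2}(\xi^{(k)}_i - \eta^{(k)}_i) = O(1)$ after the change of variables, so all previous estimates remain intact.

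The main obstacle is the precise $L$-power bookkeeping (in particular the inequality $Q(\bn) + \frac{1}{2}\sum n_\ell \le -3/2$) together with the tail control of the $u,v$-integrals, where the cubic correction in the $F_1$ expansion must be explicitly invoked to secure uniform convergence beyond the Gaussian region; outside of these two points the argument is a careful but standard combination of Hadamard's inequality, the Cauchy determinant decomposition, and the steepest descent estimate.
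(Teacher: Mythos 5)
Your contour choice is where the argument breaks. You localize \emph{all} levels at scale $L^{-1/4}$ around $\mp\sqrt{L}$, but for $\ell\ge 2$ the functions $F_\ell$ have coefficients $t_\ell-t_{\ell-1}=O(L^{-3/2})$, $x_\ell-x_{\ell-1}=O(L^{-1})$, $h_\ell-h_{\ell-1}=O(L^{-1/2})$, so under $\xi=-\sqrt{L}+L^{-1/4}u$ the ratio $F_\ell(\xi)/F_\ell(\eta)$ does \emph{not} tend to $f_\ell(u)/f_\ell(v)$; it tends to the constant $f_\ell(-1)/f_\ell(1)$, and, worse, the exponent varies in $u$ only through terms of size $L^{-3/4}|u|$, $L^{-3/2}|u|^2$, $L^{-9/4}|u|^3$. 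Hence $F_\ell$ decays only on the scale $|u|\sim L^{3/4}$: each level-$\ell\ge2$ integral over your rescaled contour is of order $\sqrt{L}$ rather than $O(1)$, and in the huge intermediate region $1\ll|u|\ll L^{3/4}$ your ``central'' determinant estimates (same-side differences $O(L^{-1/4})$, cross-level factor $|P_\ell|=O(L^{3n_\ell n_{\ell+1}/2})$, same-level $|\rC(\bxi^{(\ell)};\bseta^{(\ell)})|=O(L^{-n_\ell^2+n_\ell/2})$) are simply not valid, since the numerator differences in \eqref{eq:def_rC} are unbounded along the contours. (Even at the center your same-level exponent is miscomputed: the product formula gives size $L^{-\frac34 n^2+\frac14 n}$, not $L^{-n^2+n/2}$.) Consequently the key inequality ``net $L$-exponent $=Q(\bn)+\frac12\sum n_\ell\le -3/2$'' is derived from pointwise estimates that do not hold uniformly on the integration contours, and the unaccounted $O(\sqrt{L})$ per level-$\ell\ge2$ variable is exactly the kind of contribution the lemma must rule out. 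The natural window for levels $\ell\ge2$ is $\sqrt{L}$, not $L^{-1/4}$; only the level-$1$ variables should be localized at $L^{-1/4}$ around $\pm\sqrt{L}$.

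For comparison, the paper's proof avoids all of this bookkeeping: it rescales $\xi^{(\ell)}=\sqrt{L}\,u^{(\ell)}$ for $\ell\ge2$, so that $F_\ell(\xi^{(\ell)})=f_\ell(u^{(\ell)})$ \emph{exactly} and these factors have $L$-independent integrable decay; the Cauchy determinants are rescaled exactly by homogeneity, and every one of them is bounded uniformly by the Hadamard-type Lemma \ref{lm:bounds_Cauchy_det} using only $\dist\ge c\sqrt{L}$ between the left and right contour families (no product-formula expansion, no cross-level factor $P_\ell$); the level-$1$ factors are controlled by Lemma \ref{lm:bound_F1} (Gaussian decay on the rescaled contours, super-exponential decay on the wings of $L^{3/4}\hat\Gamma_\RR$); and the suppression of $n_1\ge2$ comes from the single explicit factor $L^{\frac32(1-n_1)}e^{-\frac43(n_1-1)L^{3/2}}\le1$ in \eqref{eq:prelimit_dD}. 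If you want to salvage your route, you would have to redo the determinant and tail estimates in the region $|u|\gtrsim 1$ of your parametrization, which in effect amounts to adopting the paper's $\sqrt{L}$-scaling for $\ell\ge2$. Your treatment of the $\beta_k$-derivative (a linear factor absorbed into the exponential decay) is fine once the underlying bound is repaired.
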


The proof of these two lemmas are given in Section \ref{sec:asympt}.

\subsection{Asymptotic analysis}
\label{sec:asympt}
In this subsection, we perform the asymptotic analysis and prove Lemmas \ref{lm:convergence_ptwise} and \ref{lm:uniform_bound}.

We deform the contours in \eqref{eq:def_dD_bn} in the following way
\begin{equation}
\label{eq:contour_change1}
C_{\ell,\diamond}^{\star} = \sqrt{L} \hat\Gamma_{\ell,\diamond}^{\star},\quad \text{ for } 2\le \ell\le m, \diamond\in\{\LL,\RR\}, \star\in\{\inn,\out\},
\end{equation}
and
\begin{equation}
\label{eq:contour_change2}
C_{1,\LL} = -\sqrt{L} + L^{-1/4} \hat\Gamma_{\LL},\quad C_{1,\RR}= \sqrt{L}+ L^{1/2} \hat \Gamma_\RR = \sqrt{L} + L^{-1/4}(L^{3/4} \hat\Gamma_\RR).
\end{equation}
Note that the ordering of the $\hat\Gamma$-contours ensures that the $C$-contours are still of the same order and the deformation will not pass any poles of the integrand. The variables are changed accordingly
\begin{equation}
\label{eq:change_variables1}
\xi_{i_\ell}^{(\ell)} = \sqrt{L} u_{i_\ell}^{(\ell)}, \quad \eta_{i_\ell}^{(\ell)}=\sqrt{L} v_{i_\ell}^{(\ell)},\quad 1\le i_\ell \le n_\ell, \quad 2\le \ell \le m
\end{equation}
and
\begin{equation}
\label{eq:change_variables2}
\xi_{i_1}^{(1)}=-\sqrt{L}+ L^{-1/4} u_{i_1}^{(1)},\quad  \eta_{i_1}^{(1)} = \sqrt{L}+ L^{-1/4} v_{i_1}^{(1)},\quad 1\le i_1\le n_1.
\end{equation}
This change leads to 
\begin{equation}
\label{eq:prelimit_dD}
\begin{split}
&16\pi L^{3/2} e^{\frac{4}{3}L^{3/2}}\dD_{\bn}(\bh;\bz)\\
=&16\pi 
            \prod_{\ell=2}^{m} \prod_{i_\ell=1}^{n_\ell} \left(\frac{1}{1-z_{\ell-1}}\int_{\hat\Gamma_{\ell,\LL}^\inn}\ddbar{u_{i_\ell}^{(\ell)}}{}-\frac{z_{\ell-1}}{1-z_{\ell-1}}\int_{\hat\Gamma_{\ell,\LL}^\out}\ddbar{u_{i_\ell}^{(\ell)}}{}\right)\prod_{i_1=1}^{n_1}\int_{\hat\Gamma_{\LL}}\ddbar{u_{i_1}^{(1)}}{}\\
	    &\cdot \prod_{\ell=2}^{m} \prod_{i_\ell=1}^{n_\ell} \left(\frac{1}{1-z_{\ell-1}}\int_{\hat\Gamma_{\ell,\RR}^\inn}\ddbar{v_{i_\ell}^{(\ell)}}{}-\frac{z_{\ell- 
            1}}{1-z_{\ell-1}}\int_{\hat\Gamma_{\ell,\RR}^\out}\ddbar{v_{i_\ell}^{(\ell)}}{}\right)\prod_{i_1=1}^{n_1}\int_{L^{3/4}\hat\Gamma_{\RR}}\ddbar{v_{i_1}^{(1)}}{}\\ 
 &\quad L^{n_1/2}\rC(\bseta^{(1)};\bxi^{(1)})\cdot \prod_{\ell=1}^{m} L^{(n_\ell+n_{\ell+1})/2} \rC(\bxi^{(\ell)}\bunion \bseta^{(\ell+1)};\bseta^{(\ell)}\bunion \bxi^{(\ell+1)})\\
 &\quad \cdot \prod_{\ell=2}^{m}\prod_{i_\ell=1}^{n_\ell}\frac{f_\ell(u_{i_\ell}^{(\ell)})}{f_\ell(v_{i_\ell}^{(\ell)})} \cdot \prod_{i_1=1}^{n_1} \frac{e^{\frac{2}{3}L^{3/2}}F_1(\xi_{i_1}^{(1)})}{e^{-\frac{2}{3}L^{3/2}}F_1(\eta_{i_1}^{(1)})} \cdot L^{3/2(1-n_1)}e^{-\frac43(n_1-1)L^{3/2}}\prod_{\ell=1}^{m-1}(1-z_\ell)^{n_\ell}(1-z_\ell^{-1})^{n_{\ell+1}}
\end{split}
\end{equation}
where 
in the integrand the notations $\bxi^{(\ell)}=(\xi_1^{(\ell)},\cdots,\xi_{n_\ell}^{(\ell)}),\bseta=(\eta_1^{(\ell)},\cdots,\eta_{n_\ell}^{(\ell)})$ 
are functions of $u_{i_\ell}^{(\ell)}$ and $v_{i_\ell}^{(\ell)}$ determined by  \eqref{eq:change_variables1} and \eqref{eq:change_variables2}.  Moreover, 
\begin{equation}
    \label{eq:aux_12_28_03}
    \begin{split}
    &16\pi L^{3/2} e^{\frac{4}{3}L^{3/2}} \frac{\partial}{\partial \beta_k}\dD_{\bn}(\bh;\bz)\\
=&16\pi 
            \prod_{\ell=2}^{m} \prod_{i_\ell=1}^{n_\ell} \left(\frac{1}{1-z_{\ell-1}}\int_{\hat\Gamma_{\ell,\LL}^\inn}\ddbar{u_{i_\ell}^{(\ell)}}{}-\frac{z_{\ell-1}}{1-z_{\ell-1}}\int_{\hat\Gamma_{\ell,\LL}^\out}\ddbar{u_{i_\ell}^{(\ell)}}{}\right)\prod_{i_1=1}^{n_1}\int_{\hat\Gamma_{\LL}}\ddbar{u_{i_1}^{(1)}}{}\\
	    &\cdot \prod_{\ell=2}^{m} \prod_{i_\ell=1}^{n_\ell} \left(\frac{1}{1-z_{\ell-1}}\int_{\hat\Gamma_{\ell,\RR}^\inn}\ddbar{v_{i_\ell}^{(\ell)}}{}-\frac{z_{\ell- 
            1}}{1-z_{\ell-1}}\int_{\hat\Gamma_{\ell,\RR}^\out}\ddbar{v_{i_\ell}^{(\ell)}}{}\right)\prod_{i_1=1}^{n_1}\int_{L^{3/4}\hat\Gamma_{\RR}}\ddbar{v_{i_1}^{(1)}}{}\\ 
 &\quad L^{n_1/2}\rC(\bseta^{(1)};\bxi^{(1)})\cdot \prod_{\ell=1}^{m} L^{(n_\ell+n_{\ell+1})/2} \rC(\bxi^{(\ell)}\bunion \bseta^{(\ell+1)};\bseta^{(\ell)}\bunion \bxi^{(\ell+1)}) \cdot H_k\\
 &\quad \cdot \prod_{\ell=2}^{m}\prod_{i_\ell=1}^{n_\ell}\frac{f_\ell(u_{i_\ell}^{(\ell)})}{f_\ell(v_{i_\ell}^{(\ell)})} \cdot \prod_{i_1=1}^{n_1} \frac{e^{\frac{2}{3}L^{3/2}}F_1(\xi_{i_1}^{(1)})}{e^{-\frac{2}{3}L^{3/2}}F_1(\eta_{i_1}^{(1)})} \cdot L^{3/2(1-n_1)}e^{-\frac43(n_1-1)L^{3/2}}\prod_{\ell=1}^{m-1}(1-z_\ell)^{n_\ell}(1-z_\ell^{-1})^{n_{\ell+1}}
    \end{split}
\end{equation}
where 
\begin{equation}
    H_k=\begin{dcases}
        \sum_{i_m=1}^{n_m} (u_{i_m}^{(m)} - v_{i_m}^{(m)}), & k=m,\\
        \sum_{i_k=1}^{n_k} (u_{i_k}^{(k)} - v_{i_k}^{(k)}) - \sum_{i_{k+1}=1}^{n_{k+1}} (u_{i_{k+1}}^{(k+1)} - v_{i_{k+1}}^{(k+1)}), & 2\le k\le m-1,\\
        \sum_{i_1=1}^{n_1}(-2+ L^{-3/4}(u_{i_1}^{(1)} -v_{i_1}^{(1)})) -\sum_{i_2=1}^{n_2} (u_{i_2}^{(2)} - v_{i_2}^{(2)}), & k=1.
    \end{dcases}
\end{equation}
\subsubsection{Some inequalities}
We will need some inequalities to bound the integrand and apply the dominated convergence theorem.

Note that our choice of contours in \eqref{eq:contour_change1} and \eqref{eq:contour_change2} implies that
\begin{equation}
\label{eq:aux_11_28_02}
\dist(C_{\ell,\LL}^\inn \cup C_{\ell,\LL}^\out \cup C_{\ell+1,\RR}^\inn \cup C_{\ell+1,\RR}^\out; C_{\ell,\RR}^\inn \cup C_{\ell,\RR}^\out \cup C_{\ell+1,\LL}^\inn \cup C_{\ell+1,\LL}^\out) \ge c_1\sqrt{L},\quad 2\le \ell \le m-1,
\end{equation}
for some positive constant $c_1$, and
\begin{equation}
\label{eq:aux_11_28_03}
\dist(C_{1,\LL}  \cup C_{2,\RR}^\inn \cup C_{2,\RR}^\out; C_{1,\RR} \cup C_{2,\LL}^\inn \cup C_{2,\LL}^\out) \ge c_2\sqrt{L}
\end{equation}
for some positive constant $c_2$. Therefore Lemma \ref{lm:bounds_Cauchy_det} implies
\begin{equation}
\label{eq:bounds_Cauchy1}
|L^{n_1/2}\rC(\bseta^{(1)};\bxi^{(1)})| \le (1/c_2)^{n_1}n_1^{n_1/2},
\end{equation}
and
\begin{equation}
\label{eq:bounds_Cauchy2}
|L^{(n_\ell+n_{\ell+1})/2} \rC(\bxi^{(\ell)}\bunion \bseta^{(\ell+1)};\bseta^{(\ell)}\bunion \bxi^{(\ell+1)})| \le (\sqrt{2}/c_1)^{n_\ell+n_{\ell+1}}n_\ell^{n_\ell/2}n_{\ell+1}^{n_{\ell+1}/2},\quad \ell=1,\cdots,m.
\end{equation}

\bigskip
We also need the following bound for the function $F_1$. Recall $F_1(\zeta)=e^{-\frac{1}{3}t_1\zeta^3+x_1\zeta^2+ h_1\zeta}$ defined in \eqref{eq:def_F}. Also note that the parameters are scaled as in \eqref{eq:parameter_scale}.
\begin{lm}
\label{lm:bound_F1}
For all $u\in\hat\Sigma_{\LL}$, we have
\begin{equation}
\label{eq:aux_1_12_01}
|e^{2/3L^{3/2}-(u^2 + \alpha_1-\beta_1+\frac13\tau_1)}F_1(-\sqrt{L}+L^{-1/4}u)|\le e^{c_3L^{-3/4}}
\end{equation}
for sufficiently large $L$, where $c_3$ and $c_4$ are constants independent on $u$ and $L$.
When $v\in \ii\realR$, we have
\begin{equation}
\label{eq:aux_1_12_03}
|e^{-2/3L^{3/2}}F_1(\sqrt{L}+L^{-1/4}v)| \ge e^{-(1-c'_3)v^2 + \alpha_1+\beta_1-\frac13\tau_1}
\end{equation}
for sufficiently large $L$, and when $v=L^{3/4}( \pm\ii\cot \frac{2\pi}{5} +r e^{\pm\ii \pi/5})$ and $r\ge 0$ we have
\begin{equation}
\label{eq:aux_1_12_06}
|F_1(\sqrt{L} + L^{-1/4}v)| \ge e^{ 2/3L^{3/2}+c'_4L^{3/2}(1+r)}
\end{equation}
for sufficiently large $L$. Here the constants $c'_3\in(0,1)$ and $c'_4>0$ are independent of $v$ and $L$. Moreover, if $u\in\hat\Sigma_{\LL} $ is fixed, we have
\begin{equation}
\label{eq:aux_1_12_02}
\lim_{L\to\infty} e^{2/3L^{3/2}}F_1(-\sqrt{L}+L^{-1/4}u)=e^{u^2 + \alpha_1-\beta_1+\frac13\tau_1},
\end{equation}
and similarly, if $v\in\ii\realR$ is fixed, we have
\begin{equation}
\label{eq:aux_1_12_04}
\lim_{L\to\infty}e^{-2/3L^{3/2}}F_1(\sqrt{L}+L^{-1/4}v)=e^{ -v^2 + \alpha_1+\beta_1-\frac13\tau_1 }.
\end{equation}
\end{lm}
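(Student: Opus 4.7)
The plan is to substitute the scaled expressions for $\zeta$ into the exponent $g(\zeta):=-\tfrac{1}{3}t_1\zeta^3 + x_1\zeta^2 + h_1\zeta$ of $F_1$ and expand systematically in powers of $L^{-1/4}$, then exploit the sign of the cubic correction on each contour to get uniform estimates. All four estimates reduce to the same mechanical expansion; the only delicate point is uniformity on the unbounded portions of the contours, which is controlled by the cubic term.

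For \eqref{eq:aux_1_12_01} and \eqref{eq:aux_1_12_02} I plug in $\zeta = -\sqrt{L}+L^{-1/4}u$ together with $h_1=L+\beta_1 L^{-1/2}$, $x_1=\alpha_1 L^{-1}$, $t_1=1+\tau_1 L^{-3/2}$ and collect
\begin{equation*}
g(\zeta) = -\tfrac{2}{3}L^{3/2} + u^2 + \bigl(\alpha_1-\beta_1+\tfrac{1}{3}\tau_1\bigr) + L^{-3/4}\bigl(-\tfrac{u^3}{3}+(\beta_1-\tau_1-2\alpha_1)u\bigr) + (\tau_1+\alpha_1)L^{-3/2}u^2 - \tfrac{\tau_1}{3}L^{-9/4}u^3.
\end{equation*}
On $\hat\Gamma_\LL=\{re^{\pm \ii 2\pi/3}:r\ge 0\}$ we have $u^3=r^3$ (real positive), so the leading $L^{-3/4}$ correction contributes $-L^{-3/4}r^3/3\le 0$. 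A short calculus argument shows that the sum of all correction terms, viewed as a function of $r$, attains its maximum at a bounded $r$ and equals $O(L^{-3/4})$ there, which gives \eqref{eq:aux_1_12_01}; the pointwise limit \eqref{eq:aux_1_12_02} is immediate for fixed $u$.

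The analogous substitution $\zeta = \sqrt{L}+L^{-1/4}v$ produces an expansion with the sign of the leading $L^{3/2}$ term flipped and the $L^{-3/2}$ coefficient replaced by $(\alpha_1-\tau_1)v^2$; restricting to $v=\ii y$ on the imaginary portion (where $|y|\le L^{3/4}\cot(2\pi/5)$) makes the $L^{-3/4}$ correction purely imaginary, and the only surviving real correction is $(\tau_1-\alpha_1)L^{-3/2}y^2$, which is absorbed into the slack $c'_3 y^2$ for any fixed $c'_3\in(0,1)$ once $L$ is large enough that $|\tau_1-\alpha_1|L^{-3/2}<c'_3$. This yields \eqref{eq:aux_1_12_03}, and \eqref{eq:aux_1_12_04} is the corresponding pointwise statement.

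The remaining claim \eqref{eq:aux_1_12_06} is the main obstacle, because on the sloped portion $v=L^{3/4}(\pm \ii\cot(2\pi/5)+re^{\pm \ii\pi/5})$ the quantity $L^{-1/4}v$ is of size $\sqrt{L}$, so the perturbative expansion above is no longer valid. Instead I factor $\zeta=L^{1/2}w$ with $w=1\pm \ii\cot(2\pi/5)+re^{\pm \ii\pi/5}$, use the identities $1+\ii\cot(2\pi/5)=e^{\ii\pi/10}/\sin(2\pi/5)$ and $\cos(\pi/10)=\sin(2\pi/5)$, and compute $\Re(g(\zeta))=L^{3/2}\bigl(-\tfrac{1}{3}\Re(w^3)+\Re(w)\bigr)+O((1+r)^3)$ uniformly in $r\ge 0$. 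Expanding $w^3$ and using $\cos(3\pi/5)=-\cos(2\pi/5)$ shows that $-\tfrac{1}{3}\Re(w^3)+\Re(w)$ exceeds $\tfrac{2}{3}$ at $r=0$ by the positive amount $\tfrac{1}{3}-\cos(3\pi/10)/(3\sin^3(2\pi/5))$, is strictly increasing in $r\ge 0$, and grows like $\cos(2\pi/5)\,r^3/3$ for large $r$. Hence for some small $c'_4>0$ and $L$ large enough to absorb the $O((1+r)^3)$ correction against the dominant $L^{3/2}$ growth, one obtains $\Re(g(\zeta))\ge\tfrac{2}{3}L^{3/2}+c'_4 L^{3/2}(1+r)$, which is \eqref{eq:aux_1_12_06}.
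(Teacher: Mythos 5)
Your proposal is correct and follows essentially the same route as the paper: the same substitutions \(\zeta=-\sqrt{L}+L^{-1/4}u\) and \(\zeta=\sqrt{L}+L^{-1/4}v\), the same expansion of the exponent with the observation that the \(L^{-3/4}\)-correction is dominated by \(-u^3/3\) on \(\hat\Gamma_{\LL}\) (resp.\ is purely imaginary on \(\ii\realR\)), and on the sloped rays the same leading-order computation, which you merely repackage by writing \(\zeta=L^{1/2}w\) and analyzing \(\Re(w)-\Re(w^3)/3\) (equivalent to the paper's expansion in powers of \(r\), whose constant, linear, and cubic coefficients are positive at order \(L^{3/2}\) while the quadratic one degenerates to \(O(1)\)). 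Your numerical identities check out (e.g.\ \(\varphi(0)-\tfrac23=\cot^2\tfrac{2\pi}{5}\)), so the argument is sound.
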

\begin{proof}[Proof of Lemma \ref{lm:bound_F1}]
The proof is based on a direct calculation and the fact that the exponent decays or grows super-exponentially along the contour $\hat\Sigma_\LL$ or $\hat\Sigma_\RR$ respectively. 

We first prove  \eqref{eq:aux_1_12_01} and \eqref{eq:aux_1_12_02}. By inserting \eqref{eq:parameter_scale} and simplifying the exponent, we have
\begin{equation}
\label{eq:aux_11_28}
\begin{split}
&e^{2/3L^{3/2}-(u^2 + \alpha_1-\beta_1+\frac13\tau_1)}F_1(-\sqrt{L}+L^{-1/4}u)\\
&=\exp\left(L^{-3/4}\left( -\frac13(1+\tau_1L^{-3/2})u^3 +(\tau_1+\alpha_1)L^{-3/4}u^2 +(\beta_1-\tau_1-2\alpha_1)u \right)\right).
\end{split}
\end{equation}
Recall the contour $\hat\Sigma_\LL=\{re^{\pm \ii 2\pi/3}: r\ge 0\}$. Along this contour, it is easy to see that $\Re(u^3)$ grows to $+\infty$ as a cubic function. Thus the real part of $-\frac13(1+\tau_1L^{-3/2})u^3 +(\tau_1+\alpha_1)L^{-3/4}u^2 +(\beta_1-\tau_1-2\alpha_1)u$ is uniformly bounded  by a constant. This constant can be chosen independent of $L$. Thus the right hand side of \eqref{eq:aux_11_28} is bounded by a constant $c_3$ independent of $L$. This implies \eqref{eq:aux_1_12_01}. The equation \eqref{eq:aux_11_28} also implies \eqref{eq:aux_1_12_02} for fixed $u\in \hat\Sigma_\LL$.

Now we consider the other cases. If $v\in \ii\realR$, we similarly obtain,
\begin{equation}
\label{eq:aux_1_12_05}
e^{-2/3L^{3/2}}F_1(\sqrt{L}+L^{-1/4}v) = e^{-\left(1+(\tau_1-\alpha_1)L^{-3/2}\right)v^2 + \alpha_1+\beta_1-\frac13\tau_1 +vL^{-3/4}(-\frac13v^2-\tau_1+2\alpha_1+\beta_1-\frac13\tau_1v^2 L^{-3/2})}.
\end{equation}
Note that the last term in the exponent $vL^{-3/4}(-\frac13v^2-\tau_1+2\alpha_1+\beta_1-\frac13\tau_1v^2 L^{-3/2})\in\ii\realR$. Thus
\begin{equation}
|e^{-2/3L^{3/2}}F_1( \sqrt{L}+L^{-1/4}v)| = e^{-\left(1+(\tau_1-\alpha_1)L^{-3/2}\right)v^2 + \alpha_1+\beta_1-\frac13\tau_1} \ge e^{-(1-c'_3)v^2 + \alpha_1+\beta_1-\frac13\tau_1}
\end{equation}
for all $v\in\ii\realR$ and some positive constant $c'_3$ when $L$ is sufficiently large. This implies \eqref{eq:aux_1_12_03}. We also note that \eqref{eq:aux_1_12_04} follows from \eqref{eq:aux_1_12_05}.

It remains to show \eqref{eq:aux_1_12_06}. Note that the two different cases of $v$ lead to the same formula since the conjugation of a function doesn't change its norm. We only consider the case when $v$ is in the upper half plane $v=L^{3/4}( \ii\cot \frac{2\pi}{5} +r e^{ \ii \pi/5})$. In this case, by a direct computation, we have
\begin{equation}
\begin{split}
&|e^{-2/3L^{3/2}}F_1( \sqrt{L}+L^{-1/4}v)|\\
&=e^{[L^{3/2}(\frac23+\cot^2\frac{2\pi}{5})+O(1)] +r[L^{3/2}(2\cot\frac{2\pi}{5}\sin\frac{\pi}{5} +\cot^2\frac{2\pi}{5}\cos\frac{\pi}{5})+O(1)] +r^2\cdot O(1) +r^3[-\frac{1}{3}L^{3/2}\cos\frac{3\pi}{5}+O(1)]}
\end{split}
\end{equation}
where $O(1)$ are constant terms which are independent of both $r$ and $L$. \eqref{eq:aux_1_12_06} follows immediately.

\end{proof}

\subsubsection{Proof of Lemma \ref{lm:convergence_ptwise} and Lemma \ref{lm:uniform_bound}}

We first prove Lemma \ref{lm:convergence_ptwise}.

Assume $U^{(\ell)}=(u_{1}^{(\ell)},\cdots,u_{n_\ell}^{(\ell)})$ and $V^{(\ell)}=(v_{1}^{(\ell)},\cdots,v_{n_\ell}^{(\ell)})$, $1\le \ell \le m$ are fixed at the moment.

Recall the definition of the Cauchy determinants in \eqref{eq:def_rC} and \eqref{eq:id_rC}. It is easy to check that
\begin{equation}
L^{\frac{1}{2}(n_\ell+n_{\ell+1})} \rC(\bxi^{(\ell)}\bunion \bseta^{(\ell+1)};\bseta^{(\ell)}\bunion \bxi^{(\ell+1)})
=
\rC(U^{(\ell)}\bunion V^{(\ell+1)};V^{(\ell)}\bunion U^{(\ell+1)}),\qquad \ell\ge 2,
\end{equation}
where we set $n_{\ell+1}=0$ and hence $U^{(m+1)}$ and $V^{(m+1)}$ are both empty vectors.
When $\ell=1$, noting that all $\xi_{i_1}^{(1)}$ are around $-\sqrt{L}$ and all $\eta_{i_1}^{(1)}$ are around $\sqrt{L}$ when $L$ becomes large, $1\le i_1\le n_1$, we have the asymptotic
\begin{equation}
\lim_{L\to\infty}L^{(n_1+n_2)/2}\rC(\bxi^{(1)}\bunion \bseta^{(2)};\bseta^{(1)}\bunion \bxi^{(2)})=
\begin{dcases}
\rC(-1\bunion V^{(2)};1\bunion U^{(2)}),& \ell=1, \text{ and }n_1=1,\\
0,& \ell=1, \text{ and }n_1\ge 2,
\end{dcases}
\end{equation}
and similarly
\begin{equation}
\lim_{L\to\infty}L^{n_1/2}\rC(\bseta^{(1)};\bxi^{(1)})=
\begin{dcases}
1/2,& n_1=1,\\
0,& n_1\ge 2.
\end{dcases}
\end{equation}
 
Finally, inserting the parameters \eqref{eq:parameter_scale} and recalling \eqref{eq:def_F} and \eqref{eq:def_f}, we have
\begin{equation}
F_\ell(\xi_{i_\ell}^{(\ell)})=f_\ell(u_{i_\ell}^{(\ell)}),\quad F_\ell(\eta_{i_\ell}^{(\ell)})=f_\ell(v_{i_\ell}^{(\ell)})
\end{equation}
for $1\le i_\ell\le n_\ell$ and $2\le \ell\le m$, and using Lemma \ref{lm:bound_F1} we have
\begin{equation}
\lim_{L\to\infty}e^{\frac{2}{3}L^{3/2}}F_1(\xi_{i_1}^{(1)}) = f_1(-1)\cdot e^{(u_{i_1}^{(1)})^2}, \quad \lim_{L\to\infty}\frac{1}{e^{-\frac{2}{3}L^{3/2}}F_1(\eta_{i_1}^{(1)})} = \frac{1}{f_1(1)}\cdot e^{(v_{i_1}^{(1)})^2}.
\end{equation}

Therefore, if we are able to take the limit inside the integral, we see that the large $L$ limit of \eqref{eq:prelimit_dD} is $0$ for $n_1>1$, and matches \eqref{eq:def_pD} for $n_1=1$. 
Thus its limit is $\pD^{\bn}(\bbeta;\bz)$, and \eqref{eq:aux_12_28_01} holds. For \eqref{eq:aux_12_28_02}, we need the limit of the extra factor 
\begin{equation}
    \lim_{L\to\infty} H_k=\tilde H_k:=\begin{dcases}
        \sum_{i_m=1}^{n_m} (u_{i_m}^{(m)} - v_{i_m}^{(m)}), & k=m,\\
        \sum_{i_k=1}^{n_k} (u_{i_k}^{(k)} - v_{i_k}^{(k)}) - \sum_{i_{k+1}=1}^{n_{k+1}} (u_{i_{k+1}}^{(k+1)} - v_{i_{k+1}}^{(k+1)}), & 2\le k\le m-1,\\
        -2n_1-\sum_{i_2=1}^{n_2} (u_{i_2}^{(2)} - v_{i_2}^{(2)}), & k=1.
    \end{dcases}
\end{equation}
For the only nontrivial case $n_1=1$, this limit matches the factor if we take the $\beta_k$ derivative of the function  $\pD_{\bn}(\bbeta;\bz;(\alpha_1,\tau_1),\cdots,(\alpha_m,\tau_m))$:
\begin{equation}
    \begin{split}
    \frac{\partial}{\partial \beta_k}  \prod_{\ell=2}^{m} \prod_{i_\ell=1}^{n_\ell} \frac{ f_{\ell}(u_{i_\ell}^{(\ell)})}{ f_{\ell}(v_{i_\ell}^{(\ell)})} \cdot \frac{f_1(-1)}{f_1(1)} =  \prod_{\ell=2}^{m} \prod_{i_\ell=1}^{n_\ell} \frac{ f_{\ell}(u_{i_\ell}^{(\ell)})}{ f_{\ell}(v_{i_\ell}^{(\ell)})} \cdot \frac{f_1(-1)}{f_1(1)} \cdot \tilde H_k.
\end{split}
\end{equation}
Lemma \ref{lm:convergence_ptwise} follows immediately.

\bigskip

It remains to justify that we can take the large $L$ limit inside the integral. We only need to find a uniform bound which is integrable so that the dominated convergence theorem applies. In fact, applying Lemma \ref{lm:bounds_Cauchy_det} with \eqref{eq:aux_11_28_02} and \eqref{eq:aux_11_28_03}, and Lemma \ref{lm:bound_F1}, we have the following uniform bound for the right hand side of \eqref{eq:prelimit_dD}
\begin{equation}
\label{eq:aux_1_13_01}
\begin{split}
&16\pi c^{|\bn|} \cdot \prod_{\ell=1}^{m-1}  \frac{(1+|z_\ell|^{n_{\ell+1}})}{|1-z_\ell|^{n_{\ell+1}-n_\ell}|z_\ell|^{n_{\ell+1}}} \cdot \prod_{\ell=1}^m n_\ell^{n_\ell} \cdot \prod_{\ell=2}^m \prod_{i_\ell=1}^{n_\ell}\int_{\hat\Gamma_{\ell,\LL}^{\inn}\cup \hat\Gamma_{\ell,\LL}^\out} |f_\ell(u_{i_\ell}^{(\ell)})|\frac{|\rd u_{i_\ell}^{(\ell)}|}{2\pi}\int_{\hat\Gamma_{\ell,\RR}^{\inn}\cup \hat\Gamma_{\ell,\RR}^\out} \frac{1}{|f_\ell(u_{i_\ell}^{(\ell)})|} \frac{|\rd v_{i_\ell}^{(\ell)}|}{2\pi}\\
&\cdot \prod_{i_1=1}^{n_1} \int_{\hat\Gamma_{\LL}} \left|e^{(u_{i_1}^{(1)})^2} \right| \frac{|\rd u_{i_1}^{(1)}|}{2\pi} \left(\int_{-\ii L^{3/4}\cot \frac{2\pi}{5}}^{\ii L^{3/4}\cot \frac{2\pi}{5}} \left| e^{(1-c_3')(v_{i_1}^{(1)})^2}\right| \frac{|\rd v_{i_1}^{(1)}|}{2\pi} +2\int_{0}^\infty e^{-c'_4L^{3/2}(1+r_{i_1})}\frac{L^{3/4}\rd r_{i_1}}{2\pi}\right)
\end{split}
\end{equation}
which is bounded by
\begin{equation}
\label{eq:aux_11_28_04}
C^{n_1+\cdots+n_m}\cdot \prod_{\ell=1}^{m-1} \frac{(1+|z_\ell|^{n_{\ell+1}})}{|1-z_\ell|^{n_{\ell+1}-n_\ell}|z_\ell|^{n_{\ell+1}}} \cdot \prod_{\ell=1}^m n_\ell^{n_\ell}
\end{equation}
since each integral in \eqref{eq:aux_1_13_01} is uniformly bounded. The uniform bound of \eqref{eq:aux_12_28_03} is similar. Note that  $|H_k| \le  \prod_{\ell=1}^m \prod_{i_\ell=1}^{n_\ell}(2+|u_{i_\ell}^{(\ell)}|)(2+|v_{i_\ell}^{(\ell)}|)$, therefore \eqref{eq:aux_12_28_03} is uniformly bounded by \eqref{eq:aux_1_13_01}, except that each integrand needs to be multiplied by a linear factor. Such an expression still gives the bound \eqref{eq:aux_11_28_04} with a different constant $C$. This completes the proof.
\bigskip

Finally, we prove Lemma \ref{lm:uniform_bound}. Note the equations \eqref{eq:prelimit_dD} and \eqref{eq:aux_12_28_03},  the uniform bound \eqref{eq:aux_11_28_04} for both right hand sides of \eqref{eq:prelimit_dD} and \eqref{eq:aux_12_28_03}. Lemma \ref{lm:uniform_bound} follows immediately.

\section{Proof of Theorem \ref{thm:main}}
\label{sec:proof_mainthm}

In this section, we prove Theorem \ref{thm:main} using Proposition \ref{prop:main}. We split the proof of the two parts into two different subsections.

\subsection{Proof of Theorem \ref{thm:main} (a)}
\label{sec:proof_mainthm_a}

Recall the notation $\HH$ in \eqref{eq:def_HH}. Theorem \ref{thm:main} (a) is equivalent to the following convergence for arbitrary $m\ge 1$ distinct points  $(\alpha_\ell,\tau_\ell)\in \realR^2$ and arbitrary real numbers $\beta_\ell\in\realR$, $1\le \ell\le m$,
\begin{equation}
\label{eq:need_to_show}
\begin{split}
&\prob \left( \bigcap_{\ell=1}^m \left\{\HH(\alpha_\ell,\tau_\ell) \ge \beta_\ell \right\} \, \Big| \, \left\{\HH(\hat \alpha, \hat\tau) \ge \hat \beta\right\}\right) \\
&\to \hat\FT(\beta_1-\hat\beta,\cdots,\beta_m-\hat\beta; (\alpha_1-\hat\alpha,\tau_1-\hat\tau),\cdots,(\alpha_m-\hat\alpha,\tau_m-\hat\tau))
\end{split}
\end{equation}
as $L\to\infty$, where the function $\hat\FT$ is the joint probability functions of $\cH$ defined in Definition \ref{def:cH_by_tail}. Without loss of generality, we assume that $(\alpha_1,\tau_1),\cdots,(\alpha_m,\tau_m)$ are ordered in the following way, $(\alpha_1,\tau_1) \prec \cdots \prec (\alpha_m,\tau_m)$ since both sides are invariant under the permutation of the indices. Now we consider the following three cases.

If $m=1$ and $(\alpha_1,\tau_1)=(\hat \alpha, \hat \tau)$, then the left hand side of \eqref{eq:need_to_show} becomes
\begin{equation}
    \begin{split}
\frac{\prob\left( \HH(\alpha_1,\tau_1) \ge \max\{\beta_1,\hat\beta\} \right)}{\prob\left(\HH(\alpha_1, \tau_1) \ge \hat \beta\right)} \to e^{-2\max\{\beta_1-\hat\beta,0\}} &= \FT(\max\{\beta_1-\hat\beta,0\};(0,0))\\
&= \hat\FT(\beta_1-\beta;(0,0))
    \end{split}
\end{equation}
as $L\to\infty$, where the function $\FT$ and $\hat\FT$ are defined in Definitions \ref{defn:FT} and \ref{def:cH_by_tail} respectively. The convergence in the above equation follows from the estimate \eqref{eq:tail}.

If $m\ge 2$ and $(\alpha_k,\tau_k)= (\hat\alpha,\hat \tau)$ for some $1\le k\le m$, and $\beta_k\ge \hat\beta$, the left hand side of  \eqref{eq:need_to_show} becomes
\begin{equation}
\label{eq:mge2_case1}
\frac{\prob\left(  \bigcap_{\ell=1}^m \left\{\HH(\alpha_\ell,\tau_\ell) \ge \beta_\ell \right\} \right) }{\prob\left(  \HH(\hat \alpha, \hat\tau) \ge \hat \beta\right) }
\to \frac{\FT(\bbeta;(\alpha_1,\tau_1),\cdots,(\alpha_m,\tau_m))}{\FT(\hat\beta;(\hat\alpha,\hat\tau))}= e^{-\frac{2}{3}\hat\tau +2\hat\beta}\FT(\bbeta;(\alpha_1,\tau_1),\cdots,(\alpha_m,\tau_m))
\end{equation}
as $L\to\infty$. Here the convergence follows from Proposition \ref{prop:main} and the second equation follows from the formula \eqref{eq:def_FT_1}. Note that the right hand side of \eqref{eq:mge2_case1} equals to, by using Proposition \ref{prop:shift_parameters},
\begin{equation}
\label{eq:mge2_case1b}
\FT(\beta_1-\hat\beta,\cdots,\beta_m-\hat\beta;(\alpha_1-\hat\alpha,\tau_1-\hat\tau),\cdots,(\alpha_m-\hat\alpha,\tau_m-\hat\tau))
\end{equation}
which matches the right hand side of \eqref{eq:need_to_show} since $\beta_k-\hat\beta\ge 0$. Thus \eqref{eq:need_to_show} holds. Note that in this argument we assumed that $\beta_k\ge \hat\beta$. If $\beta_k<\hat\beta$, then we need to replace $\beta_k$ on the left hand side of \eqref{eq:mge2_case1} by $\hat\beta$ since the event $\HH(\alpha_k,\tau_k)\ge \beta_k$ is trivial conditioned on $\HH(\hat\alpha=\alpha_k, \hat\tau=\tau_k) \ge \hat\beta$. We then end at the limit \eqref{eq:mge2_case1b} with $\beta_k$ replaced by $\hat\beta$, which is $\FT(\beta_1-\hat\beta,\cdots,0=\max\{0,\beta_k-\hat\beta\}, \cdots, \beta_m-\hat\beta; (\alpha_1-\hat\alpha,\tau_1-\hat\tau),\cdots,(\alpha_m-\hat\alpha,\tau_m-\hat\tau))$ which still matches the right hand side of \eqref{eq:need_to_show} (see Definition \ref{def:cH_by_tail} case (i)).

For the last case, we assume that $m\ge 1$ and $(\alpha_\ell,\tau_\ell)\ne (\hat\alpha,\hat\tau)$ for all $1\le \ell \le m$. For this case, the left hand side of \eqref{eq:need_to_show} becomes
\begin{equation}
\label{eq:mge1case3}
\frac{\prob\left(  \bigcap_{\ell=1}^m \left\{\HH(\alpha_\ell,\tau_\ell) \ge \beta_\ell \right\} \bigcap \left\{\HH(\hat \alpha, \hat\tau) \ge \hat \beta\right\}\right) }{\prob\left(  \HH(\hat \alpha, \hat\tau) \ge \hat \beta\right)}.
\end{equation}
This expression has been considered in the second case if we replace the $m$ points $(\alpha_\ell,\tau_\ell)$, $\ell=1,\cdots,m$, by the following new sequence of points
\begin{equation}
(\alpha_1,\tau_1),\cdots, (\alpha_{k-1},\tau_{k-1}), (\hat \alpha,\hat\tau), (\alpha_k,\tau_k),\cdots, (\alpha_m,\tau_m)
\end{equation}
and replace the vector $\bbeta$ by $(\beta_1,\cdots,\beta_{k-1},\hat\beta,\beta_{k+1},\cdots,\beta_m)$. Here $k$ is the index such that $(\alpha_{k-1},\tau_{k-1})\prec (\hat\alpha,\hat\tau) \prec (\alpha_k,\tau_k)$. The argument of the second case implies that \eqref{eq:mge1case3} converges to 
\begin{equation}
\FT(\cdots,\beta_{k-1}-\hat\beta, 0, \beta_{k}-\hat\beta,\cdots; \cdots,(\alpha_{k-1}-\hat\alpha,\tau_{k-1}-\hat\tau), (0,0),(\alpha_{k+1}-\hat\alpha,\tau_{k+1}-\hat\tau),\cdots)
\end{equation}
here we suppressed the irrelevant coordinates in the function $\FT$ for notation simplification. Note that this matches the right hand side of \eqref{eq:need_to_show} by Definition \ref{def:cH_by_tail} case (ii). Thus \eqref{eq:need_to_show} holds.

In conclusion, for all the three cases we proved \eqref{eq:need_to_show}. Theorem \ref{thm:main} (a) follows immediately.

\subsection{Proof of Theorem \ref{thm:main} (b)}
\label{sec:proof_mainthm_b}

In this subsection we prove the second part of Theorem \ref{thm:main}. 

Suppose $(\alpha_1,\tau_1)\prec \cdots \prec (\alpha_m,\tau_m)$ are $m\ge 2$ points on $\realR^2$ and $\beta_1,\cdots,\beta_m\in\realR$. Assume that $(\alpha_k,\tau_k)=(\hat \alpha, \hat\tau)$ and $\beta_k=\hat\beta$ for some $1\le k\le m$.  We need to prove that
\begin{equation}
    \label{eq:main2}
     \lim_{L\to\infty} \prob\left( \bigcap_{\substack{1\le \ell \le m\\ \ell \ne k}} \left\{\rH(x_\ell,t_\ell) \ge h_\ell\right\} \Big| \rH(\hat x,\hat t)= \hat h \right)
    = \prob\left( \bigcap_{\substack{1\le \ell \le m\\ \ell\ne k}} \left\{ \cH_0(\alpha_\ell-\hat\alpha,\tau_\ell-\hat\tau) \ge \beta_\ell - \hat\beta \right\} \right)
\end{equation}
where 
\begin{equation}
    \label{eq:parameter_scale2}
x_\ell = \alpha_\ell L^{-1},\quad t_\ell =1+\tau_\ell L^{-3/2},\quad h_\ell =L +\beta_\ell L^{-1/2}
\end{equation}
for $1\le \ell \le m$.

Now we write the left hand side of \eqref{eq:main2} as 
\begin{equation}
    \label{eq:aux_12_28_05}
    \begin{split}
   \lim_{L\to\infty} \frac{\frac{\rd }{\rd \hat h} \prob\left( \bigcap_{\ell=1}^m \left\{\rH(x_\ell,t_\ell) \ge h_\ell\right\} \right)}{\frac{\rd}{\rd \hat h} \prob\left( \rH(\hat x,\hat t)\ge \hat h\right)} &= \lim_{L\to\infty} \frac{\frac{\partial }{\partial \beta_k} \prob\left( \bigcap_{\ell=1}^m \left\{\rH(x_\ell,t_\ell) \ge h_\ell\right\} \right) }{\frac{\partial}{\partial \beta_k} \prob\left( \rH(x_k, t_k)\ge h_k\right)} \\
   &=\lim_{L\to\infty} \frac{\frac{\partial }{\partial \beta_k} \prob\left( \bigcap_{\ell=1}^m \left\{\HH(\alpha_\ell,\tau_\ell) \ge \beta_\ell\right\} \right) }{\frac{\partial}{\partial \beta_k} \prob\left( \HH(\alpha_k,\tau_k)\ge \beta_k\right)} 
    \end{split}
\end{equation}
where $\HH(\alpha,\tau) = L^{1/2}(\rH(\alpha L^{-1},1+\tau L^{-3/2})-L)$ is defined in \eqref{eq:def_HH}.
Now we apply Proposition \ref{prop:main} and further write \eqref{eq:aux_12_28_05} as 
\begin{equation}
    \begin{split}
    &\frac{\frac{\partial }{\partial \beta_k} \FT(\beta_1,\cdots,\beta_m;(\alpha_1,\tau_1),\cdots,(\alpha_m,\tau_m)) }{\frac{\partial}{\partial \beta_k} \FT(\beta_k;(\alpha_k,\tau_k))}\\
    &=\frac{\frac{\partial }{\partial \beta_k} \FT(\beta_1-\beta,\cdots,\beta_m-\beta;(\alpha_1-\alpha,\tau_1-\tau),\cdots,(\alpha_m-\alpha,\tau_m-\tau)) }{\frac{\partial}{\partial \beta_k} \FT(\beta_k-\beta;(\alpha_k-\alpha,\tau_k-\tau))}
    \end{split}
\end{equation}
for any $\alpha,\tau,\beta\in\realR$, where we also used the property of $\FT$ when we shift the parameters as described in Proposition \ref{prop:shift_parameters}. Assume $\beta<\beta_k$. The above expression can be further written as, using Definition \ref{def:cH_by_tail},
\begin{equation}
    \begin{split}
&\frac{\frac{\partial}{\partial \beta_k}\prob\left(\bigcap_{1\le \ell \le m} \left\{\cH(\alpha_\ell-\alpha,\tau_\ell-\tau)\ge \beta_\ell-\beta\right\}\right)}{\frac{\partial}{\partial \beta_k}\prob\left(\cH(\alpha_k-\alpha,\tau_k-\tau)\ge \beta_k-\beta\right)}\\
&= \prob\left(\bigcap_{\substack{1\le \ell \le m\\ \ell\ne k}}\left\{\cH(\alpha_\ell-\alpha,\tau_\ell-\tau)\ge \beta_\ell-\beta\right\} \Big| \cH(\alpha_k-\alpha,\tau_k-\tau)=\beta_k-\beta\right)\\
&= \prob\left(\bigcap_{\substack{1\le \ell \le m\\ \ell\ne k}}\left\{\cH(\alpha_\ell-\alpha,\tau_\ell-\tau)-\cH(\alpha_k-\alpha,\tau_k-\tau)\ge \beta_\ell-\beta_k\right\} \Big| \cH(\alpha_k-\alpha,\tau_k-\tau)=\beta_k-\beta\right).
    \end{split}
\end{equation}
Now we take $\alpha=\alpha_k=\hat\alpha$ and $\tau=\tau_k=\hat\tau$, and note that $\cH_0$ is independent of $\cH(0,0)$ (see Proposition \ref{prop:properties_cH}). We see the equation above equals to the right hand side of \eqref{eq:main2}. This completes the proof.
\section{Proof of Proposition \ref{prop:crossover}} \label{sec:crossover}

In this section, we prove Proposition \ref{prop:crossover}. Since the difference between the two fields $\cH$ and $\cH_0$ is an exponential random variable $\cH(0,0)$ which doesn't depend on $\lambda$, it is sufficient to prove the proposition for one field. We prove it for the field $\cH$ in two subsections for the negative and positive time regimes respectively.

\subsection{Brownian limit in the negative time regime}

In the negative time regime, Proposition \ref{prop:crossover} follows from the following two propositions.

\begin{prop}
    \label{prop:negative_regime_limit}
    Assume $m\ge 2$ is an integer, and $\rx_1,\cdots,\rx_{m-1},\rh_1,\cdots,\rh_{m-1} \in \realR$ and $\rt_1<\cdots<\rt_{m-1}<0$ are fixed. Then
    \begin{equation}
        \label{eq:negative_regime_limit}
        \prob\left( \bigcap_{\ell=1}^{m-1} \left\{ \frac{1}{\sqrt{2\lambda}} \left(\cH\left( \frac{\lambda^{1/2}\rx_\ell}{\sqrt{2}},\lambda \rt_\ell\right) -\lambda \rt_\ell \right) \ge \rh_\ell \right\} \right)\to\prob\left(\bigcap_{\ell=1}^{m-1} \left\{ \min\{\bB_1(-\rt_\ell)+\rx_\ell,\bB_2(-\rt_\ell)-\rx_\ell\}\ge \rh_\ell\right\}\right)
    \end{equation}
    as $\lambda\to\infty$.
\end{prop}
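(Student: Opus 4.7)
The plan is to identify the left-hand side of \eqref{eq:negative_regime_limit} with a value of the function $\FT$ via Definition~\ref{def:cH_by_tail}, and then perform a steepest-descent analysis on the contour-integral formula from Definition~\ref{defn:FT}, along lines parallel to the asymptotic analysis of Section~\ref{sec:asympt}. Since every $\rt_\ell<0$, the points $(\alpha_\ell,\tau_\ell):=(\lambda^{1/2}\rx_\ell/\sqrt{2},\,\lambda\rt_\ell)$, $1\le \ell\le m-1$, satisfy $(\alpha_1,\tau_1)\prec\cdots\prec(\alpha_{m-1},\tau_{m-1})\prec(0,0)$, so that, with $\beta_\ell:=\lambda\rt_\ell+\sqrt{2\lambda}\,\rh_\ell$, Definition~\ref{def:cH_by_tail}(ii) gives
\[
\text{LHS of \eqref{eq:negative_regime_limit}} \;=\; \FT\!\big(\beta_1,\ldots,\beta_{m-1},0;\,(\alpha_1,\tau_1),\ldots,(\alpha_{m-1},\tau_{m-1}),(0,0)\big).
\]

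With this scaling one checks that $\tau_\ell-\tau_{\ell-1}$ and $\beta_\ell-\beta_{\ell-1}$ are of order $\lambda$ while $\alpha_\ell-\alpha_{\ell-1}$ is of order $\sqrt\lambda$; consequently the saddle-point equation for the exponent of $f_\ell$ degenerates to $w^2=1$, and the two saddles collapse onto the poles $\pm 1$ of the Cauchy kernels. This dictates the local change of variable (reminiscent of \eqref{eq:change_variables2})
\[
u_i^{(\ell)}=-1+\tilde u_i^{(\ell)}/\sqrt{\lambda},\qquad v_i^{(\ell)}=1+\tilde v_i^{(\ell)}/\sqrt{\lambda},
\]
together with a suitable translation and scaling of the $\Gamma$-contours. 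A Taylor expansion analogous to the one carried out in Lemma~\ref{lm:bound_F1} reduces each $f_\ell(u_i^{(\ell)})$ and $1/f_\ell(v_i^{(\ell)})$ to a Gaussian factor in the rescaled variable, with quadratic coefficient $(\rt_\ell-\rt_{\ell-1})$ and linear coefficient $\sqrt{2}\big((\rh_\ell-\rh_{\ell-1})\mp(\rx_\ell-\rx_{\ell-1})\big)$.

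The Cauchy determinants naturally split into a block structure in this regime: ``cross'' entries, where one variable is near $-1$ and the other near $+1$, remain $O(1)$ (essentially $\pm 1/2$), while ``same-side'' entries produce $O(\sqrt\lambda)$ factors. The $\sqrt\lambda$'s from the same-side blocks cancel exactly against the Jacobians $\lambda^{-(\sum n_\ell)/2}$ from the change of variables, and the cross-side blocks combine with the $z_\ell$-contour integrations to decouple the ``left'' ($\tilde u$) and ``right'' ($\tilde v$) integrals in the limit. The result is a product of two Gaussian contour integrals that depend, respectively, only on the combinations $\rh_\ell-\rx_\ell$ and $\rh_\ell+\rx_\ell$. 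Matching coefficients of the resulting quadratic forms against the covariance of Brownian motion at times $-\rt_\ell$ identifies these two integrals as $\prob\big(\bigcap_\ell\{\bB_1(-\rt_\ell)+\rx_\ell\ge\rh_\ell\}\big)$ and $\prob\big(\bigcap_\ell\{\bB_2(-\rt_\ell)-\rx_\ell\ge\rh_\ell\}\big)$; by the independence of $\bB_1$ and $\bB_2$ their product equals the right-hand side of \eqref{eq:negative_regime_limit}.

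The main obstacle is producing a $\lambda$-uniform integrable bound on the integrand (analogous to Lemma~\ref{lm:uniform_bound}) that allows the dominated convergence theorem to be applied to the infinite sum over $\bn$ and to the nested contour integrals. A closely related combinatorial difficulty is to verify that the full sum over $\bn=(1,n_2,\ldots,n_m)$, together with the inner/outer choices in each $\Gamma$-contour, actually reassembles in this rescaling into the explicit Gaussian integral representing the joint Brownian tail probability. An analogous collapse was carried out for the pre-high-point Brownian-bridge limit in \cite{Liu-Wang22, baik2024pinchedup}, and we expect the present case to be technically simpler, as the $\sqrt\lambda$ scaling here is milder than the $L^{3/2}$-type scaling driving the asymptotics of Section~\ref{sec:asympt}.
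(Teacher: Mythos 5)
Your route is the same as the paper's: write the left-hand side, via Definition \ref{def:cH_by_tail}(ii), as $\FT$ evaluated at $\alpha_\ell=\lambda^{1/2}\rx_\ell/\sqrt2$, $\tau_\ell=\lambda\rt_\ell$, $\beta_\ell=\lambda\rt_\ell+\sqrt{2\lambda}\,\rh_\ell$ with the extra point $(0,0)$; observe that the saddle points of the exponent of $f_\ell$ collapse onto the Cauchy poles $\pm1$; rescale $u=-1+O(\lambda^{-1/2})$, $v=1+O(\lambda^{-1/2})$ so that $f_\ell$ produces Gaussian factors with quadratic coefficient proportional to $\rt_\ell-\rt_{\ell-1}$ and linear coefficients in $\rh_\ell-\rh_{\ell-1}\mp(\rx_\ell-\rx_{\ell-1})$; note the $O(\sqrt\lambda)$ same-side versus $O(1)$ cross-side structure of the Cauchy determinants canceling the Jacobian; and identify the two decoupled Gaussian multiple integrals with the joint tails of $\bB_1(-\rt_\ell)+\rx_\ell$ and $\bB_2(-\rt_\ell)-\rx_\ell$. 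All of this matches Lemma \ref{lm:point_wise_negative} and the computation following it.

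However, the two items you defer are exactly the substance of the proof, and your framing of the second one points in the wrong direction. The sum over $\bn$ does not ``reassemble'' into the Gaussian integral: in the limit only the single term $n_2=\cdots=n_m=1$ contributes, and the mechanism is an energy estimate at the saddle rather than a combinatorial identity. Evaluating $f_\ell$ near $-1$ and $1/f_\ell$ near $+1$ yields a factor $e^{-\frac43(\rt_\ell-\rt_{\ell-1})\lambda}$ per pair of variables at level $\ell$; after cancellation against $f_1(-1)/f_1(1)=e^{\frac23\tau_1-2\beta_1}$ the net exponential factor of $\cD_{\bn}$ is $e^{-\frac43\lambda\sum_{\ell}(n_\ell-1)(\rt_\ell-\rt_{\ell-1})+o(\lambda)}$, which annihilates every term with some $n_\ell\ge2$ and, combined with the Hadamard-type Cauchy-determinant bound of Lemma \ref{lm:bounds_Cauchy_det} (giving the $n_\ell^{n_\ell}$ and $\bz$-dependent control), simultaneously furnishes the $\lambda$-uniform integrable bound you list as an obstacle (this is Lemma \ref{lm:uniform_bound_negative}). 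Without this observation your plan is a sketch of the easy part only. Two further steps you assert but do not carry out: the right contours must be bent from angle $\pm\pi/5$ to $\pm\pi/3$ (legitimate here precisely because the $\rt_\ell$ are strictly ordered) before the rescaling, and the identification of the two decoupled limiting integrals with the Brownian tail probabilities is not a matter of ``matching covariances'' but is proved by evaluating the $z_\ell$-integrals (only the outer contours survive), deforming to vertical lines, and differentiating in the $\rh_\ell$'s to recover the product of Gaussian transition densities, as in \eqref{eq:aux_1_13_05}--\eqref{eq:aux_1_13_06}.
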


\begin{prop}   
    \label{prop:converegence_fields_extension}
    Let Y be a random field on $\realR\times (-T,0)$ with the property that for every positive integer $d$ and real numbers $x_1,\cdots,x_d$, the cumulative distribution function $\prob\left(\cap_{\ell=1}^d \{Y(x_\ell,t_\ell) \le h_\ell\}\right)$ is continuous in the variables $h_\ell$ and $t_\ell$ for all $1\le \ell \le d$. If a sequence of random fields $Y_n$ on $\realR\times (-T,0)$ satisfies $\prob\left(\cap_{\ell=1}^d \{Y_n(x_\ell,t_\ell) \le h_\ell\}\right) \to \prob\left(\cap_{\ell=1}^d \{Y(x_\ell,t_\ell) \le h_\ell\}\right)$ as $n\to\infty$ for all $d\ge 1$, all ordered time parameters $t_1<\cdots<t_d$ and all other real parameters $x_\ell,h_\ell$, $1\le \ell\le d$, then the fields $Y_n(x,t)\to Y(x,t)$ in the sense of convergence of finite-dimensional distributions as $n\to\infty$.
\end{prop}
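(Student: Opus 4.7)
The plan is to extend the strictly ordered convergence to arbitrary $d$-tuples by a perturbation-and-error-control argument. Since the joint CDF $\prob(\bigcap_{\ell=1}^{d}\{Y_n(x_\ell,t_\ell)\le h_\ell\})$ is invariant under simultaneous permutation of the triples $(x_\ell,t_\ell,h_\ell)$, I will first reorder indices so that $t_1\le t_2\le \cdots\le t_d$. When all these inequalities are strict the conclusion is immediate from the hypothesis, so the only case that requires further work is when some of the $t_\ell$ coincide.

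For a small parameter $\eta>0$, I will introduce the strict perturbation $s^{(\eta)}_\ell:=t_\ell+\eta\ell$, which lies in $(-T,0)$ for $\eta$ small enough and satisfies $s^{(\eta)}_1<\cdots<s^{(\eta)}_d$. For any $\delta>0$, the elementary set-theoretic inclusion
\begin{equation*}
\begin{split}
\bigcap_{\ell=1}^{d}\{Y_n(x_\ell,t_\ell)\le h_\ell\} & \subset \bigcap_{\ell=1}^{d}\{Y_n(x_\ell,s^{(\eta)}_\ell)\le h_\ell+\delta\}\\
& \quad \cup\bigcup_{\ell=1}^{d}\{|Y_n(x_\ell,s^{(\eta)}_\ell)-Y_n(x_\ell,t_\ell)|>\delta\}
\end{split}
\end{equation*}
produces an upper bound on $\prob(\bigcap_{\ell}\{Y_n(x_\ell,t_\ell)\le h_\ell\})$ in terms of the joint CDF of $Y_n$ at the strictly ordered tuple $(s^{(\eta)}_1,\ldots,s^{(\eta)}_d)$ plus $d$ pairwise error probabilities; the symmetric inclusion (with $h_\ell-\delta$ on the left-hand side and $h_\ell$ on the right) yields a matching lower bound.

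Each pairwise error probability is controlled via the hypothesis applied with $d=2$ at the strictly ordered pair of times $t_\ell<s^{(\eta)}_\ell$ with common spatial coordinate $x_\ell$: this gives the convergence in distribution of $(Y_n(x_\ell,t_\ell),Y_n(x_\ell,s^{(\eta)}_\ell))$ to $(Y(x_\ell,t_\ell),Y(x_\ell,s^{(\eta)}_\ell))$, and the continuous mapping theorem then yields $\prob(|Y_n(x_\ell,s^{(\eta)}_\ell)-Y_n(x_\ell,t_\ell)|>\delta)\to \prob(|Y(x_\ell,s^{(\eta)}_\ell)-Y(x_\ell,t_\ell)|>\delta)$ for $\delta$ in the co-countable set of continuity points of the associated one-dimensional law. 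The hypothesized continuity of the joint CDF of $Y$ in its time parameters forces $(Y(x_\ell,t_\ell),Y(x_\ell,s^{(\eta)}_\ell))$ to concentrate on the diagonal as $\eta\to 0^{+}$, so the limiting error probabilities tend to $0$. Sending $n\to\infty$, then $\eta\to 0^{+}$, then $\delta\to 0^{+}$ along valid continuity points, and using continuity of $F$ in both the $h$- and $t$-variables to control the main term, yields $\limsup_{n}\prob(\bigcap_{\ell}\{Y_n(x_\ell,t_\ell)\le h_\ell\})\le \prob(\bigcap_{\ell}\{Y(x_\ell,t_\ell)\le h_\ell\})$, with the symmetric argument providing the matching liminf inequality.

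The principal care I anticipate lies in coordinating the three limit operations and in restricting $\delta$ to continuity points of the one-dimensional laws of the differences $|Y(x_\ell,s^{(\eta)}_\ell)-Y(x_\ell,t_\ell)|$ before sending $\delta\to 0^{+}$; once the inclusion above is in hand, everything else is mechanical bookkeeping.
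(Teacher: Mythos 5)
Your proposal is correct, and it is essentially the argument this paper relies on: the paper does not reprove the proposition but cites \cite[Lemma 3.6]{Liu-Wang22} (see also \cite[Lemma 2.1]{baik2024pinchedup}), whose proof is exactly your scheme of perturbing coincident times to strictly ordered ones, sandwiching the joint CDF with $\pm\delta$-shifted events, taking $n\to\infty$ first so the error terms involve only the limit field $Y$, and then using the assumed continuity of the joint CDFs in the time variables to make the two-time laws concentrate on the diagonal as $\eta\to0^+$ before sending $\delta\to0^+$. As a minor simplification, the continuity-point bookkeeping for $\delta$ is unnecessary: the Portmanteau inequality for the closed set $\{|a-b|\ge\delta\}$ gives $\limsup_n\prob\left(|Y_n(x_\ell,s^{(\eta)}_\ell)-Y_n(x_\ell,t_\ell)|>\delta\right)\le \prob\left(|Y(x_\ell,s^{(\eta)}_\ell)-Y(x_\ell,t_\ell)|\ge\delta\right)$ for every $\delta>0$, which is all your limiting procedure needs.
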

Proposition \ref{prop:converegence_fields_extension} was first proved in \cite[Lemma 3.6]{Liu-Wang22}, also see \cite[Lemma 2.1]{baik2024pinchedup}, where the time parameter is stated within the interval $(0,T)$ instead of $(-T,0)$. However, the statement actually holds for any time interval without changing the proof. We also note that although the assumptions of Proposition \ref{prop:converegence_fields_extension} are about the joint cumulative distribution functions $\prob\left(\cap_{\ell=1}^d \{Y_n(x_\ell,t_\ell) \le h_\ell\}\right) \to \prob\left(\cap_{\ell=1}^d \{Y(x_\ell,t_\ell) \le h_\ell\}\right)$, it can be changed to $\prob\left(\cap_{\ell=1}^d \{Y_n(x_\ell,t_\ell) \ge h_\ell\}\right) \to \prob\left(\cap_{\ell=1}^d \{Y(x_\ell,t_\ell) \ge h_\ell\}\right)$ by replacing $Y_n$ and $Y$ by $-Y_n$ and $-Y$ respectively. Therefore the two propositions above implies Proposition \ref{prop:crossover} (a).

\bigskip

Below we prove Proposition \ref{prop:negative_regime_limit}. We write 
\begin{equation}
    \label{eq:parameter_negative_regime1}
    \alpha_\ell = \frac{\lambda^{1/2}\rx_\ell}{\sqrt{2}}, \quad \tau_\ell= \lambda \rt_\ell,\quad \beta_\ell = \lambda \rt_\ell + \sqrt{2\lambda}\rh_\ell,\qquad 1\le \ell \le m,
\end{equation}
where we set
\begin{equation}
    \label{eq:parameter_negative_regime2}
    \rx_m=0,\quad \rt_m=0,\quad \rh_m=0.
\end{equation}
Then the left hand side of \eqref{eq:negative_regime_limit} equals to, using Definitions \ref{def:cH_by_tail} and \ref{defn:FT},
\begin{equation}
    \label{eq:aux_12_29_04}
    \begin{split}
    &\FT(\beta_1,\cdots,\beta_m;(\alpha_1,\tau_1),\cdots,(\alpha_m,\tau_m))\\
    &= (-1)^m \oint_{>1}\cdots \oint_{>1} \sum_{\substack{n_\ell \ge 1\\ 2\le \ell\le m}} \frac{1}{ (n_2!\cdots n_{m-1}!)^2} \cD_{\bn}(\bbeta;\bz) \prod_{\ell=1}^{m-1}\frac{\rd z_\ell}{2\pi\ii z_\ell(1-z_\ell)}.
    \end{split}
\end{equation}
Here the notations in the formula are the same as in Definition \ref{defn:FT}, especially $\bn=(n_1=1,n_2,\cdots,n_m)$, and $\cD_{\bn}$ is defined in \eqref{eq:def_cdnz}. It turns out if the parameters satisfy \eqref{eq:parameter_negative_regime1} and \eqref{eq:parameter_negative_regime2}, the main contribution of the above summation comes from the term $n_2=\cdots=n_m=1$ and all other terms converges to zero, as $\lambda\to\infty$. The asymptotic analysis is very similar to that in \cite[Section 3]{Liu-Wang22} where the leading contribution comes from the term $n_1=\cdots=n_m=1$, and that in Section \ref{sec:proof_proposition} of this paper, hence we only provide the main steps of the proof and skip the details.

The main technical part of the proof is the following two lemmas, both assuming $\bz$ is in a compact set of $(\complexC\setminus\{0,1\})^{m-1}$.

\begin{lm}
    \label{lm:point_wise_negative}
When $n_2=\cdots=n_m=1$, we have 
\begin{equation}
    \label{eq:limit_n_1}
   \begin{split}
    &\lim_{\lambda\to\infty}\cD_{\bn}(\bbeta;\bz) \\
    &= (-1)^m\prod_{\ell=1}^{m-1} (1-z_\ell)(1-z_\ell^{-1}) \\
    &\quad \cdot \prod_{\ell=2}^m \left(\frac{1}{1-z_{\ell-1}}  \int_{C_{\ell,\LL}^\inn} \frac{\rd \xi_\ell}{2\pi\ii} -\frac{z_{\ell-1}}{1-z_{\ell-1}} \int_{C_{\ell,\LL}^\out} \frac{\rd \xi_\ell}{2\pi\ii} \right)\left(\frac{1}{1-z_{\ell-1}} \int_{C_{\ell,\RR}^\inn} \frac{\rd \eta_\ell}{2\pi\ii} -\frac{z_{\ell-1}}{1-z_{\ell-1}} \int_{C_{\ell,\RR}^\out} \frac{\rd \eta_\ell}{2\pi\ii} \right)\\
    &\quad \quad \frac{\prod_{\ell=2}^m e^{\frac12(\rt_{\ell}-\rt_{\ell-1})\xi_\ell^2 +(-\rx_{\ell}+\rx_{\ell-1}+\rh_{\ell}-\rh_{\ell-1})\xi_\ell}}{\xi_2\prod_{\ell=2}^{m-1} (\xi_{\ell+1} -\xi_{\ell})}
    \cdot \frac{\prod_{\ell=2}^m e^{\frac12(\rt_{\ell}-\rt_{\ell-1})\eta_\ell^2 +(-\rx_{\ell}+\rx_{\ell-1}-\rh_{\ell}+\rh_{\ell-1})\eta_\ell}}{(-\eta_2)\prod_{\ell=2}^{m-1} (-\eta_{\ell+1} +\eta_{\ell})}
   \end{split}
\end{equation}
where $C_{m,\LL}^\inn,\cdots,C_{2,\LL}^\inn$ and $C_{2,\LL}^\out,\cdots,C_{m,\LL}^\out$ are ordered contours on the left half plane $\Re(\xi)<0$, from left to right, with orientations from $\infty e^{-\ii2\pi/3}$ to $\infty e^{\ii 2\pi/3}$, and similarly $C_{m,\RR}^\inn,\cdots,C_{2,\RR}^\inn$ and $C_{2,\RR}^\out,\cdots,C_{m,\RR}^\out$ are ordered contours on the right half plane $\Re(\eta)>0$, from right to left, with orientations from $\infty e^{-\ii \pi/3}$ to $\infty e^{\ii  \pi/3}$. Moreover,  if $n_\ell\ge 2$ for some $2\le \ell\le m$, we have 
\begin{equation}
    \lim_{\lambda\to\infty}\cD_{\bn}(\bbeta;\bz)=0.
\end{equation}
\end{lm}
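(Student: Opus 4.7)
The plan is to run a saddle-point analysis, zooming in on the critical points $u=-1$ and $v=+1$ of the exponential factors $f_\ell$. First I would deform the $\Gamma$-contours in the definition \eqref{eq:def_cdnz} of $\cD_{\bn}$ within their half-planes (which is legal by analyticity away from $\pm 1$ and preservation of the nesting order) so that after the change of variables
\[
u_{i_\ell}^{(\ell)} = -1 + (2\lambda)^{-1/2}\xi_{i_\ell}^{(\ell)}, \qquad v_{i_\ell}^{(\ell)} = 1 + (2\lambda)^{-1/2}\eta_{i_\ell}^{(\ell)},
\]
they converge to the limit contours $C_{\ell,\LL}^{\inn/\out}$ and $C_{\ell,\RR}^{\inn/\out}$ specified in the lemma. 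Each change contributes a Jacobian $(2\lambda)^{-1/2}$, for a total $(2\lambda)^{-\sum_{\ell\ge 2}n_\ell}$. A Taylor expansion of $\log f_\ell$ around $\mp 1$, using the scaling \eqref{eq:parameter_negative_regime1}--\eqref{eq:parameter_negative_regime2}, shows that the linear, quadratic, and cubic terms rescale to orders $O(1)$, $O(1)$, and $O(\lambda^{-1/2})$ respectively, giving the pointwise limit
\[
\frac{f_\ell(-1+(2\lambda)^{-1/2}\xi)}{f_\ell(-1)} \longrightarrow e^{\frac12(\rt_\ell-\rt_{\ell-1})\xi^2 + (-\rx_\ell+\rx_{\ell-1}+\rh_\ell-\rh_{\ell-1})\xi}
\]
and the analogous formula around $v=+1$, producing exactly the Gaussian exponentials of \eqref{eq:limit_n_1}.

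The crucial algebraic input is the telescoping
\[
\prod_{\ell=1}^m \frac{f_\ell(-1)}{f_\ell(1)} = e^{\frac{2}{3}\tau_m - 2\beta_m} = 1,
\]
valid because $\tau_m = \beta_m = 0$. Factoring the full $f$-ratio as $\prod_{\ell=1}^m(f_\ell(-1)/f_\ell(1))^{n_\ell}\cdot(\text{limiting bounded part})$, with $n_1=1$, and applying the identity reduces the prefactor to $\prod_{\ell\ge 2}(f_\ell(-1)/f_\ell(1))^{n_\ell-1}$, whose exponent has real part $-\tfrac{4\lambda}{3}\sum_{\ell\ge 2}(n_\ell-1)(\rt_\ell-\rt_{\ell-1}) + O(\sqrt{\lambda})$, strictly negative of order $\lambda$ whenever some $n_\ell\ge 2$ since $\rt_\ell>\rt_{\ell-1}$. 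In parallel, a direct computation using \eqref{eq:def_rC} for the $n_2=\cdots=n_m=1$ case gives
\[
\rC(-1\bunion V^{(2)};1\bunion U^{(2)})\to \frac{2\lambda}{\xi_2\eta_2},\quad \rC(U^{(\ell)}\bunion V^{(\ell+1)};V^{(\ell)}\bunion U^{(\ell+1)})\to \frac{2\lambda}{(\xi_{\ell+1}-\xi_\ell)(\eta_{\ell+1}-\eta_\ell)}
\]
for $2\le\ell\le m-1$, and $\rC(U^{(m)};V^{(m)})\to -1/2$. The total Cauchy factor $(2\lambda)^{m-1}$ cancels the Jacobian $(2\lambda)^{-(m-1)}$ exactly, and combining with the factor $2$ in \eqref{eq:def_cdnz}, the $z$-dependent prefactor $\prod(1-z_\ell)(1-z_\ell^{-1})$, and the Gaussian exponentials reassembles into the right-hand side of \eqref{eq:limit_n_1}; the overall sign $(-1)^m$ is traced to the $(-1)^{n(n-1)/2}$ in the Cauchy formula together with the $-1/2$ from the last Cauchy determinant and the $-\eta_\ell$ denominator convention in \eqref{eq:limit_n_1}. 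For the case when some $n_\ell\ge 2$, the Vandermonde-type small factors $(u_i^{(\ell)}-u_j^{(\ell)})^2=O(\lambda^{-1})$ and $(v_i^{(\ell)}-v_j^{(\ell)})^2=O(\lambda^{-1})$ in the Cauchy numerators yield a net power of $\lambda$ that is non-positive, and is therefore trivially dominated by the $e^{-O(\lambda)}$ decay identified above.

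The main obstacle is justifying the exchange of limit and integration by dominated convergence. A uniform bound on the integrand along the rescaled contours must be produced, analogous to the estimates in Section \ref{sec:asympt}: Lemma \ref{lm:bounds_Cauchy_det} controls the Cauchy determinants, while Taylor-type estimates for $f_\ell$ along the chosen contours, whose angles $\pm 2\pi/3$ and $\pm\pi/3$ give $\Re(\xi^2),\Re(\eta^2)<0$ at infinity, ensure Gaussian decay of the integrand. Once this uniform bound is in place, the pointwise limits and the $\lambda$-power count above deliver both statements of the lemma.
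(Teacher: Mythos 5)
Your proposal is correct and follows essentially the same route as the paper: the saddle points $\mp 1$, the change of variables $u=-1+(2\lambda)^{-1/2}\xi$, $v=1+(2\lambda)^{-1/2}\eta$, the expansion of the exponents giving the Gaussian-linear limits, the Cauchy-determinant asymptotics whose factor $(2\lambda)^{m-1}$ cancels the Jacobians, the exponential decay $e^{-\frac43\lambda\sum_{\ell\ge 2}(n_\ell-1)(\rt_\ell-\rt_{\ell-1})+O(\sqrt\lambda)}$ (your telescoping identity is just a compact rewriting of the paper's cancellation) for the case $n_\ell\ge 2$, and dominated convergence deferred to uniform bounds as in Section \ref{sec:asympt}. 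The only point to state more carefully is that moving the right contours from angles $\pm\pi/5$ to $\pm\pi/3$ is not justified by analyticity and nesting alone but by the super-exponential decay of $1/f_\ell(v)$ throughout the swept sector, which holds precisely because the cubic coefficient $\tau_\ell-\tau_{\ell-1}=\lambda(\rt_\ell-\rt_{\ell-1})$ is strictly positive here, as the paper remarks.
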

\begin{lm}
    \label{lm:uniform_bound_negative}
    The following uniform bound holds for $\cD_{\bn}(\bbeta;\bz)$ for sufficiently large $\lambda$,
    \begin{equation}
      |\cD_{\bn}(\bbeta;\bz)| \le e^{-\frac43\left( \sum_{\ell=2}^m (n_\ell-1)(\rt_\ell-\rt_{\ell-1}) \right)\lambda +o(\lambda)} \cdot \prod_{\ell=2}^{m} \frac{(1+|z_{\ell-1}|)^{2n_\ell}}{|z_{\ell-1}|^{n_\ell} |1-z_{\ell-1}|^{n_\ell -n_{\ell-1}}}  C^{\sum_{\ell=2}^{m}n_\ell} \prod_{\ell=2}^{m}n_\ell^{n_\ell}
    \end{equation}
    where $o(\lambda)$ could be chosen as $0$ when $n_2=\cdots=n_m=1$.
\end{lm}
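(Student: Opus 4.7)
The plan is to adapt the steepest-descent argument of Lemma \ref{lm:uniform_bound} to the negative-time scaling \eqref{eq:parameter_negative_regime1}--\eqref{eq:parameter_negative_regime2}. Under this scaling, for $\ell\ge 2$ one can write
\[
f_\ell(w)=\exp\!\left(-\tfrac{\lambda(\rt_\ell-\rt_{\ell-1})}{3}(w^3-3w)+\sqrt{\lambda}\,\psi_\ell(w)\right),
\]
where $\psi_\ell$ is a polynomial in $w$ whose coefficients depend on $\rx_\ell,\rx_{\ell-1},\rh_\ell,\rh_{\ell-1}$ but not on $\lambda$. The leading-order phase $\varphi(w):=(w^3-3w)/3$ has critical points $w=\pm 1$ with critical values $\mp 2/3$.

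First I would deform each $\Gamma_{\ell,\LL}^{\star}$ to a contour passing through $w=-1$ and lying entirely in $\{\Re(\varphi)\ge 2/3\}$, and each $\Gamma_{\ell,\RR}^{\star}$ to pass through $w=1$ and lie entirely in $\{\Re(\varphi)\le -2/3\}$, while keeping the prescribed angles $\pm 2\pi/3$ (resp.\ $\pm\pi/5$) at infinity. Such contours exist: the local wedges at $\pm 1$ admissible for descent have tangent rays at angles $\pm\pi/4,\pm 3\pi/4$, and these wedges are connected inside the level region to far-field sectors containing the prescribed angles $\pm 2\pi/3$ and $\pm\pi/5$ (since $\cos(3\cdot 2\pi/3)>0$ and $\cos(3\cdot\pi/5)<0$ respectively). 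The deformations can moreover be chosen to preserve the $\inn/\out$ nesting and to keep left and right contours at a uniformly positive separation (and at positive distance from $\pm 1$), for all $\lambda$ and for $\bz$ in compact subsets of $(\complexC\setminus\{0,1\})^{m-1}$.

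Along such contours, standard steepest-descent estimates give $|f_\ell(u)|\le e^{-\frac{2\lambda(\rt_\ell-\rt_{\ell-1})}{3}+O(\sqrt\lambda)}$ and $|f_\ell(v)|^{-1}\le e^{-\frac{2\lambda(\rt_\ell-\rt_{\ell-1})}{3}+O(\sqrt\lambda)}$ uniformly on the contours, and each single-variable integral $\int|f_\ell(u)|\,|du|$ or $\int|f_\ell(v)|^{-1}\,|dv|$ is bounded by a constant thanks to quadratic decay near the saddle and super-exponential decay at infinity. A direct evaluation also gives $|f_1(-1)/f_1(1)|=e^{-4\lambda\rt_1/3+O(\sqrt\lambda)}$. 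The Cauchy determinants are bounded by $C^{\sum n_\ell}\prod_\ell n_\ell^{n_\ell}$ via Lemma \ref{lm:bounds_Cauchy_det}, and the $\bz$-dependent algebraic prefactors by $\prod_\ell(1+|z_{\ell-1}|)^{2n_\ell}\big/\bigl(|z_{\ell-1}|^{n_\ell}|1-z_{\ell-1}|^{n_\ell-n_{\ell-1}}\bigr)$, exactly as in the bound proving the well-definedness of $\FT$ after Definition \ref{defn:FT}. Assembling these ingredients, the total exponent in $|\cD_{\bn}(\bbeta;\bz)|$ is
\[
-\tfrac{4\lambda\rt_1}{3}-\tfrac{4\lambda}{3}\sum_{\ell=2}^m n_\ell(\rt_\ell-\rt_{\ell-1})+O(\sqrt\lambda),
\]
and using $\rt_m=0$ (hence $\sum_{\ell=2}^m(\rt_\ell-\rt_{\ell-1})=-\rt_1$) this telescopes to $-\tfrac{4\lambda}{3}\sum_{\ell=2}^m(n_\ell-1)(\rt_\ell-\rt_{\ell-1})+O(\sqrt\lambda)$, which is the claimed bound with $o(\lambda)=O(\sqrt\lambda)$. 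In the special case $n_2=\cdots=n_m=1$, using also $\rh_m=\rx_m=0$, a direct computation evaluating the integrand at $u^{(\ell)}=-1$, $v^{(\ell)}=1$ shows that all $O(\sqrt\lambda)$ contributions in the exponent telescope to zero, so the $o(\lambda)$ can be replaced by $O(1)$.

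The main obstacle is the careful construction in the first step: finding contour deformations that simultaneously (i) pass through the saddles $\pm 1$ at descent-admissible angles, (ii) match the prescribed angles $\pm 2\pi/3$, $\pm\pi/5$ at infinity, (iii) preserve the strict ordering and nesting of $\inn/\out$ copies, and (iv) maintain uniformly positive left--right separations needed to apply Lemma \ref{lm:bounds_Cauchy_det}. A secondary technical point is uniform control of the $O(\sqrt\lambda)$ correction coming from $\sqrt\lambda\,\psi_\ell(w)$ along the entire infinite contours, which follows by splitting the contour into a $O(\lambda^{-1/2})$ neighborhood of the saddle (where the quadratic term dominates) and its complement (where the cubic term dominates). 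These estimates parallel those carried out in the proof of Lemma \ref{lm:uniform_bound} and in the analysis of \cite{Liu-Wang22}.
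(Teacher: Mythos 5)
Your exponent bookkeeping (the telescoping of the $-\tfrac{2\lambda}{3}(\rt_\ell-\rt_{\ell-1})$ contributions from each $f_\ell(u)$ and $1/f_\ell(v)$ together with $|f_1(-1)/f_1(1)|=e^{-\frac43\lambda\rt_1+O(\sqrt\lambda)}$ and $\rt_m=0$) is correct and matches the paper, but the contour construction on which everything rests does not work, and that is where the real content of the lemma lies. Writing $\varphi(w)=(w^3-3w)/3$, the region $\{\Re\varphi\ge 2/3\}$ pinches to a single point at the saddle $w=-1$: on the negative real axis $\Re\varphi$ attains its maximum $2/3$ only at $-1$, and on the imaginary axis $\Re\varphi=0$, so any contour running from $\infty e^{-2\pi\ii/3}$ to $\infty e^{2\pi\ii/3}$ either passes exactly through $-1$ or has $\min_\Gamma\Re\varphi\le 2/3-c\delta^2$, where $\delta$ is its distance to $-1$ (the same picture holds at $+1$ for the right contours). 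But the contours cannot pass through $\pm1$: the factor $\rC(-1\bunion V^{(2)};1\bunion U^{(2)})$ has poles at $u^{(2)}=-1$ and $v^{(2)}=1$, and the definition of $\cD_{\bn}$ requires $-1$ (resp.\ $1$) to lie strictly between the inner and outer families, so every left contour must avoid $-1$ and every right contour must avoid $1$. Your proposal asks simultaneously for contours ``passing through $\pm1$'' and for a ``uniformly positive separation \ldots at positive distance from $\pm 1$''; these are incompatible, and either way of resolving the conflict breaks a step. If the distances to $\pm1$ stay bounded below uniformly in $\lambda$, the pointwise bound degrades to $|f_\ell(u)|\le e^{-\lambda(\rt_\ell-\rt_{\ell-1})(2/3-c\delta^2)+O(\sqrt\lambda)}$, i.e.\ you lose a factor $e^{c'\lambda}$, and no absolute-value estimate along such contours can recover the sharp constant $\frac43\sum_{\ell}(n_\ell-1)(\rt_\ell-\rt_{\ell-1})$, let alone the statement that $o(\lambda)$ can be taken to be $0$ when $n_2=\cdots=n_m=1$; this sharpness is exactly what is needed later to kill all terms with some $n_\ell\ge 2$ in the dominated convergence argument.

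If instead you let the contours approach $\pm1$ as $\lambda\to\infty$ (which is what is actually required), then your claim that the Cauchy determinants are bounded by $C^{\sum n_\ell}\prod_\ell n_\ell^{n_\ell}$ via Lemma \ref{lm:bounds_Cauchy_det} fails, since the mutual distances between the nested contours and their distances to $\pm1$ shrink and Lemma \ref{lm:bounds_Cauchy_det} then produces factors of size $\dist^{-(n_\ell+n_{\ell+1})}$. This quantitative balance is the step your write-up omits, and it is precisely how the paper proceeds: it sets $\Gamma_{\ell,\LL}^{\star}=-1+(2\lambda)^{-1/2}C_{\ell,\LL}^{\star}$ and $\Gamma_{\ell,\RR}^{\star}=1+(2\lambda)^{-1/2}C_{\ell,\RR}^{\star}$, changes variables $u=-1+\xi/\sqrt{2\lambda}$, $v=1+\eta/\sqrt{2\lambda}$, bounds the Cauchy determinants by $(c\lambda)^{(n_\ell+n_{\ell+1})/2}$ (contour distances of order $\lambda^{-1/2}$), and cancels these powers of $\lambda$ exactly against the Jacobian $(2\lambda)^{-(n_2+\cdots+n_m)}$ of the change of variables, after which the remaining Gaussian-type integrals in $\xi,\eta$ are bounded by constants. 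So your route is the same saddle-point strategy, but the decisive interplay between the contour-to-saddle distance, the Cauchy-determinant bounds, and the Jacobian is missing, and with the fixed distances you impose the claimed inequality cannot be obtained.
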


We emphasize that the contours on the right half plane all have angles $\pm\pi/3$ instead of $\pm\pi/5$. With these angles, the integrand decays super-exponentially fast since $\rt_\ell-\rt_{\ell-1}>0$ for all $\ell=2,\cdots,m$.

We postpone the proof of these two lemmas to the end of this subsection and prove Proposition \ref{prop:crossover} (a) first. Assuming Lemma \ref{lm:point_wise_negative} and Lemma \ref{lm:uniform_bound_negative}, we apply the dominated convergence theorem in \eqref{eq:aux_12_29_04} and have 
\begin{equation}
    \label{eq:aux_12_29_05}
    \begin{split}
   &\lim_{\lambda\to\infty} \FT(\beta_1,\cdots,\beta_m;(\alpha_1,\tau_1),\cdots,(\alpha_m,\tau_m))\\
   &= \prod_{\ell=1}^{m-1}\oint_{>1}\frac{\rd z_\ell}{2\pi\ii z_\ell(1-z_\ell)} (1-z_\ell)(1-z_\ell^{-1}) \\
   &\quad \cdot \prod_{\ell=2}^m \left(\frac{1}{1-z_{\ell-1}} \cdot \int_{C_{\ell,\LL}^\inn} \frac{\rd \xi_\ell}{2\pi\ii} -\frac{z_{\ell-1}}{1-z_{\ell-1}} \int_{C_{\ell,\LL}^\out} \frac{\rd \xi_\ell}{2\pi\ii} \right)\left(\frac{1}{1-z_{\ell-1}} \int_{C_{\ell,\RR}^\inn} \frac{\rd \eta_\ell}{2\pi\ii} -\frac{z_{\ell-1}}{1-z_{\ell-1}} \int_{C_{\ell,\RR}^\out} \frac{\rd \eta_\ell}{2\pi\ii} \right)\\
   &\quad \quad \frac{\prod_{\ell=2}^m e^{\frac{\rt_{\ell}-\rt_{\ell-1}}{2}\xi_\ell^2 +(-\rx_{\ell}+\rx_{\ell-1}+\rh_{\ell}-\rh_{\ell-1})\xi_\ell}}{ \xi_2 \prod_{\ell=2}^{m-1} (\xi_{\ell+1} -\xi_{\ell})}
   \cdot \frac{\prod_{\ell=2}^m e^{\frac{\rt_{\ell}-\rt_{\ell-1}}{2}\eta_\ell^2 +(-\rx_{\ell}+\rx_{\ell-1}-\rh_{\ell}+\rh_{\ell-1})\eta_\ell}}{(-\eta_2)\prod_{\ell=2}^{m-1} (-\eta_{\ell+1} +\eta_{\ell})}\\
   &=\prod_{\ell=2}^m\int_{C_{\ell,\LL}^\out} \frac{\rd \xi_\ell}{2\pi\ii}\frac{\prod_{\ell=2}^m e^{\frac{\rt_{\ell}-\rt_{\ell-1}}{2}\xi_\ell^2 +(-\rx_{\ell}+\rx_{\ell-1}+\rh_{\ell}-\rh_{\ell-1})\xi_\ell}}{ \xi_2 \prod_{\ell=2}^{m-1} ( \xi_{\ell+1} -\xi_{\ell})} \cdot \prod_{\ell=2}^m\int_{C_{\ell,\RR}^\out} \frac{\rd \eta_\ell}{2\pi\ii}\frac{\prod_{\ell=2}^m e^{\frac{\rt_{\ell}-\rt_{\ell-1}}{2}\eta_\ell^2 +(-\rx_{\ell}+\rx_{\ell-1}-\rh_{\ell}+\rh_{\ell-1})\eta_\ell}}{(-\eta_2)\prod_{\ell=2}^{m-1} (-\eta_{\ell+1} +\eta_{\ell})}
    \end{split}
\end{equation}
where we evaluated the $z_\ell$ integrals in the last step and only the terms with integrals along the contours $C_{\ell,\diamond}^\out$ survive.
Now we claim that
\begin{equation}
\label{eq:aux_1_13_05}
    \prod_{\ell=2}^m\int_{C_{\ell,\LL}^\out} \frac{\rd \xi_\ell}{2\pi\ii}\frac{\prod_{\ell=2}^m e^{\frac{\rt_{\ell}-\rt_{\ell-1}}{2}\xi_\ell^2 +(-\rx_{\ell}+\rx_{\ell-1}+\rh_{\ell}-\rh_{\ell-1})\xi_\ell}}{ \xi_2 \prod_{\ell=2}^{m-1} ( \xi_{\ell+1} -\xi_{\ell})} =\prob\left(\bigcap_{\ell=1}^{m-1} \left\{ \bB_1(-\rt_\ell)\ge \rh_\ell-\rx_\ell\right\}\right)
\end{equation}
and 
\begin{equation}
\label{eq:aux_1_13_06}
    \prod_{\ell=2}^m\int_{C_{\ell,\RR}^\out} \frac{\rd \eta_\ell}{2\pi\ii}\frac{\prod_{\ell=2}^m e^{\frac{\rt_{\ell}-\rt_{\ell-1}}{2}\eta_\ell^2 +(-\rx_{\ell}+\rx_{\ell-1}-\rh_{\ell}+\rh_{\ell-1})\eta_\ell}}{(-\eta_2)\prod_{\ell=2}^{m-1} (-\eta_{\ell+1} +\eta_{\ell})}=\prob\left(\bigcap_{\ell=1}^{m-1} \left\{ \bB_2(-\rt_\ell)\ge \rh_\ell+\rx_\ell\right\}\right).
\end{equation}
These two identities, combing with \eqref{eq:aux_12_29_05}, give the desired result \eqref{eq:negative_regime_limit}.

The two identities are equivalent (by changing the variables $\xi\to -\eta$). Hence, we only prove the first one. It is equivalent to the following identity, using the fact that $\rx_m=\rh_m=0$,
\begin{equation}
    \prod_{\ell=2}^m\int_{C_{\ell,\LL}^\out} \frac{\rd \xi_\ell}{2\pi\ii}\frac{\prod_{\ell=2}^m e^{\frac{\rt_{\ell}-\rt_{\ell-1}}{2}\xi_\ell^2 +(\hat\rh_\ell -\hat\rh_{\ell-1})\xi_\ell}}{ \xi_2 \prod_{\ell=2}^{m-1} ( \xi_{\ell+1} -\xi_{\ell})} =\prob\left(\bigcap_{\ell=1}^{m-1} \left\{ \bB_1(-\rt_\ell)\ge \hat\rh_\ell\right\}\right),\quad \rt_m=\hat\rh_m=0.
\end{equation}

We deform the contours to vertical lines and shift them to the right half plane while keeping their order. Note that this deformation does not affect the integral on the left hand side since $\rt_\ell -\rt_{\ell-1}>0$. Now $C_{2,\LL}^\out,\cdots,C_{m,\LL}^\out$ are vertical lines on the right half plane ordered from left to right and their orientations are all upward. Note that both sides go to $0$ if any $\hat\rh_\ell\to \infty$ since the coefficient of $\hat\rh_\ell$ in the exponent on the left hand side is $\xi_\ell-\xi_{\ell+1}$ (when $2\le \ell \le m-1$) or $-\xi_2$ (when $\ell=1$) which always has a negative real part. Hence it is sufficient to show 
\begin{equation}
    \label{eq:aux_12_29_06}
    \frac{\partial^{m-1}}{\partial \hat\rh_1\cdots\partial \hat\rh_{m-1}}
    \prod_{\ell=2}^m\int_{C_{\ell,\LL}^\out} \frac{\rd \xi_\ell}{2\pi\ii}\frac{\prod_{\ell=2}^m e^{\frac{\rt_{\ell}-\rt_{\ell-1}}{2}\xi_\ell^2 +(\hat\rh_\ell -\hat\rh_{\ell-1})\xi_\ell}}{ \xi_2 \prod_{\ell=2}^{m-1} ( \xi_{\ell+1} -\xi_{\ell})} =\frac{\partial^{m-1}}{\partial \hat\rh_1\cdots\partial \hat\rh_{m-1}}\prob\left(\bigcap_{\ell=1}^{m-1} \left\{ \bB_1(-\rt_\ell)\ge \hat\rh_\ell\right\}\right).
\end{equation}
A direct calculation implies the left hand side of \eqref{eq:aux_12_29_06} equals to 
\begin{equation}
    (-1)^{m-1}\prod_{\ell=2}^m\int_{C_{\ell,\LL}^\out} \frac{\rd \xi_\ell}{2\pi\ii} \prod_{\ell=2}^m e^{\frac{\rt_{\ell}-\rt_{\ell-1}}{2}\xi_\ell^2 +(\hat\rh_\ell -\hat\rh_{\ell-1})\xi_\ell} =(-1)^{m-1}\prod_{\ell=2}^m \frac{1}{\sqrt{2\pi}} \frac{1}{\sqrt{\rt_\ell-\rt_{\ell-1}}}e^{-\frac{(\hat\rh_\ell -\hat\rh_{\ell-1})^2}{2(\rt_\ell -\rt_{\ell-1})}}
\end{equation}
which matches the right hand side of \eqref{eq:aux_12_29_06}. This completes the proof.

\bigskip

The remaining part of this subsection is to prove Lemma \ref{lm:point_wise_negative} and Lemma \ref{lm:uniform_bound_negative}. As we mentioned before, due to the similarity of the arguments with \cite{Liu-Wang22} and Section \ref{sec:proof_proposition} in this paper, we only provide the main ideas of the proof and skip details. We also note that Lemma \ref{lm:uniform_bound_negative} implies the second part of Lemma \ref{lm:point_wise_negative} since $t_\ell-t_{\ell-1}>0$ and $n_\ell\ge 1$ for each $2\le \ell \le m$.

Recall the formula $\cD_{\bn}(\bbeta;\bz)$ in \eqref{eq:def_cdnz}. We choose the contours 
\begin{equation}
    \Gamma_{\ell,\LL}^{\star} = -1 + \frac{1}{\sqrt{2\lambda}}C_{\ell,\LL}^{\star},\quad \star \in \{\inn,\out\},\quad 2\le \ell\le m,
\end{equation}
and 
\begin{equation}
    \Gamma_{\ell,\RR}^{\star} = 1 + \frac{1}{\sqrt{2\lambda}}C_{\ell,\RR}^{\star},\quad \star \in \{\inn,\out\},\quad 2\le \ell\le m.
\end{equation}
Here we emphasize that the contours $\Gamma_{\ell,\RR}^{\star}$ in the formula of $\cD_{\bn}(\bbeta;\bz)$ have the angles $\pm\pi/5$ initially. We are able to change the angles to $\pm\pi/3$ since the cubic term $(\tau_\ell-\tau_{\ell-1})v^3$ in the exponent of $1/f_\ell(v)$ is nonzero by our assumptions that the times are distinct and ordered. We remind that the main reason we chose $\pm\pi/5$ in \eqref{eq:def_cdnz} is to guarantee the convergence of the integral when the times become equal. See the footnote after the equation \eqref{eq:aux_02}.

We also changes the variables accordingly
\begin{equation}
    u_{i_\ell}^{(\ell)} = -1 + \frac{1}{\sqrt{2\lambda}} \xi_{i_\ell}^{(\ell)},\quad v_{i_\ell}^{(\ell)} = 1 + \frac{1}{\sqrt{2\lambda}} \eta_{i_\ell}^{(\ell)}
\end{equation}
for $u_{i_\ell}^{(\ell)}\in \Gamma_{\ell,\LL}^{\inn}\cup \Gamma_{\ell,\LL}^{\out}$ and $v_{i_\ell}^{(\ell)} \in \Gamma_{\ell,\RR}^{\inn}\cup\Gamma_{\ell,\RR}^{\out}$. If we fix the variables $\xi_{i_\ell}^{(\ell)}, \eta_{i_\ell}^{(\ell)}$, it is direct to compute
\begin{equation}
    \label{eq:aux_12_29_02}
    \begin{split}
    &-\frac{\tau_\ell - \tau_{\ell-1}}{3}(u_{i_\ell}^{(\ell)})^3 + (\alpha_\ell -\alpha_{\ell-1}) (u_{i_\ell}^{(\ell)})^2 + (\beta_\ell -\beta_{\ell-1}) u_{i_\ell}^{(\ell)} \\
    &= -\frac{2}{3}(\rt_\ell -\rt_{\ell-1})\lambda + \frac{\sqrt{\lambda}}{\sqrt{2}}(\rx_\ell - \rx_{\ell-1}) -\sqrt{2\lambda} (\rh_\ell -\rh_{\ell-1}) \\
    &\quad + \frac12(\rt_\ell -\rt_{\ell-1})(\xi_{i_\ell}^{(\ell)})^2 + (-\rx_\ell+\rx_{\ell-1}+\rh_\ell -\rh_{\ell-1})\xi_{i_\ell}^{(\ell)} + O(\lambda^{-1/2})
    \end{split}
\end{equation}
and 
\begin{equation}
    \label{eq:aux_12_29_03}
    \begin{split}
    &-\frac{\tau_\ell - \tau_{\ell-1}}{3}(v_{i_\ell}^{(\ell)})^3 + (\alpha_\ell -\alpha_{\ell-1}) (v_{i_\ell}^{(\ell)})^2 + (\beta_\ell -\beta_{\ell-1}) v_{i_\ell}^{(\ell)} \\
    &= \frac{2}{3}(\rt_\ell -\rt_{\ell-1})\lambda + \frac{\sqrt{\lambda}}{\sqrt{2}}(\rx_\ell - \rx_{\ell-1}) +\sqrt{2\lambda} (\rh_\ell -\rh_{\ell-1}) \\
    &\quad - \frac12(\rt_\ell -\rt_{\ell-1})(\eta_{i_\ell}^{(\ell)})^2 + (\rx_\ell-\rx_{\ell-1}+\rh_\ell -\rh_{\ell-1})\eta_{i_\ell}^{(\ell)} + O(\lambda^{-1/2}).
    \end{split}
\end{equation}
This implies, if $n_2=\cdots=n_m=1$, we have 
\begin{equation}
    \prod_{\ell=2}^{m} \prod_{i_\ell=1}^{n_\ell} \frac{f_{\ell}(u_{i_\ell}^{(\ell)})}{f_{\ell}(v_{i_\ell}^{(\ell)})} \cdot \frac{f_1(-1)}{f_1(1)} \approx \prod_{\ell=2}^m \frac{e^{\frac12(\rt_\ell -\rt_{\ell-1})(\xi_{1}^{(\ell)})^2 + (-\rx_\ell+\rx_{\ell-1}+\rh_\ell -\rh_{\ell-1})\xi_{1}^{(\ell)}  }}{e^{- \frac12(\rt_\ell -\rt_{\ell-1})(\eta_{1}^{(\ell)})^2 + (\rx_\ell-\rx_{\ell-1}+\rh_\ell -\rh_{\ell-1})\eta_{1}^{(\ell)}}}.
\end{equation}
Moreover, if $n_\ell \ge 2$ for any $2\le \ell\le m$, then we have 
\begin{equation}
    \label{eq:estimate_exponential_ftn}
    \prod_{\ell=2}^{m} \prod_{i_\ell=1}^{n_\ell} \frac{f_{\ell}(u_{i_\ell}^{(\ell)})}{f_{\ell}(v_{i_\ell}^{(\ell)})} \cdot \frac{f_1(-1)}{f_1(1)} \approx e^{-\frac43\lambda\left(\rt_1+\sum_{\ell=2}^mn_\ell(\rt_\ell-\rt_{\ell-1})\right)+o(\lambda)} = e^{-\frac43\lambda\left(\sum_{\ell=2}^m(n_\ell-1)(\rt_\ell-\rt_{\ell-1})\right)+o(\lambda)},
\end{equation}
which decays exponentially as $\lambda\to\infty$. So intuitively we know that the main contribution comes from the term when $n_2=\cdots=n_m=1$.

When $n_2=\cdots=n_m=1$, we have, after a simple computation,
\begin{equation}
    \label{eq:aux_12_29_01}
    \rC(-1\bunion V^{(2)};1\bunion U^{(2)}) = \frac{-(u_1^{(2)}-1)(v_1^{(2)}+1)}{2(v_1^{(2)}-u_1^{(2)})\cdot (u_1^{(2)}+1)(v_1^{(2)}-1)}\approx - \frac{2\lambda}{\xi_1^{(2)}\cdot (-\eta_1^{(2)})},
\end{equation}
and 
\begin{equation}
    \begin{split}
    \rC(U^{(\ell)}\bunion V^{(\ell+1)}; V^{(\ell)}\bunion U^{(\ell+1)})&= -\frac{(u_1^{(\ell)}-v_{1}^{(\ell+1)})(v_1^{(\ell)}-u_{1}^{(\ell+1)})}{(u_1^{(\ell)}-v_{1}^{(\ell)})(u_1^{(\ell+1)}-v_{1}^{(\ell+1)})(u_1^{(\ell)}-u_{1}^{(\ell+1)})(v_1^{(\ell)}-v_{1}^{(\ell+1)})}\\
    &\approx -\frac{2\lambda}{(-\xi_1^{(\ell)}+\xi_{1}^{(\ell+1)})(-\eta_1^{(\ell+1)}+\eta_{1}^{(\ell)})},
    \end{split}
\end{equation}
and
\begin{equation}
    \rC(U^{(m)}; V^{(m)})= \frac{1}{u_1^{(m)}-v_1^{(m)}}\approx -\frac{1}{2}.
\end{equation}
Inserting all these estimates in \eqref{eq:def_cdnz} we get \eqref{eq:limit_n_1}.

For Lemma \ref{lm:uniform_bound_negative}, we apply Proposition \ref{lm:bounds_Cauchy_det} and get
\begin{equation}
    \begin{split}
        |\rC(-1\bunion V^{(2)};1\bunion U^{(2)})|\le 2^{\frac{1+n_2}{2}}n_2^{n_2/2} (c\lambda)^{\frac{n_2+1}{2}},\quad |\rC(U^{(m)};V^{(m)})| \le n_m^{n_m/2} (c')^{\frac{n_m}{2}}
    \end{split}
\end{equation}
and 
\begin{equation}
    \begin{split}
        |\rC(U^{(\ell)}\bunion V^{(\ell+1)}; V^{(\ell)}\bunion U^{(\ell+1)})|\le 2^{(n_\ell+n_{\ell+1})/2}n_\ell^{n_\ell/2}n_{\ell+1}^{n_{\ell+1}/2} (c\lambda)^{\frac{n_\ell+n_{\ell+1}}{2}}
    \end{split}
\end{equation}
for $2\le \ell \le m-1$, where $c>0$ is a constant such that the distance between the $\Gamma$ contours are at least $(c\lambda)^{-1/2}$, and $c'$ is a constant such that the distance between $\Gamma_{m,\LL}^\out$ and $\Gamma_{m,\RR}^\out$ is at least $(c')^{-1/2}$. Finally 
\begin{equation}
    \left|\prod_{\ell=2}^m \prod_{i_\ell=1}^{n_\ell} \frac{\rd u_{i_\ell}^{(\ell)}}{2\pi\ii}\frac{\rd v_{i_\ell}^{(\ell)}}{2\pi\ii}\right|
    = \frac{1}{(2\lambda)^{n_2+\cdots+n_m}} \left|\prod_{i_\ell=1}^{n_\ell} \frac{\rd \xi_{i_\ell}^{(\ell)}}{2\pi\ii}\frac{\rd \eta_{i_\ell}^{(\ell)}}{2\pi\ii}\right|.
\end{equation}

Inserting these bounds to \eqref{eq:def_cdnz}, and noting \eqref{eq:estimate_exponential_ftn}, we get Lemma \ref{lm:uniform_bound_negative}.

\subsection{KPZ fixed point limit in the positive time regime}

In the positive time regime, Proposition \ref{prop:crossover} follows from the following proposition.

\begin{prop}
    \label{prop:positive_regime_limit}
    Assume $m\ge 2$ is an integer, $(\rx_\ell,\rt_\ell)$, $2\le \ell \le m$, are $m-1$ points on $\realR\times (0,\infty)$ satisfying $(\rx_2,\rt_2)\prec \cdots \prec (\rx_m,\rt_m)$, and $\rh_2,\cdots,\rh_{m} \in \realR$. Then
    \begin{equation}
        \label{eq:positive_regime_limit}
        \prob\left( \bigcap_{\ell=2}^{m} \left\{ \lambda^{-1/3} \cH\left( \lambda^{2/3}\rx_\ell, \lambda \rt_\ell\right) \ge \rh_\ell \right\} \right)\to\prob\left(\bigcap_{\ell=2}^{m} \left\{ \rH(\rx_\ell,\rt_\ell)\ge \rh_\ell\right\}\right)
    \end{equation}
    as $\lambda\to\infty$, where $\rH$ is the KPZ fixed point with the narrow wedge initial condition.
\end{prop}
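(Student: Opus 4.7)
The plan is to reduce the left hand side of \eqref{eq:positive_regime_limit} to an asymptotic analysis of the function $\FT$ in the regime where its space and time parameters grow to infinity with the $1{:}2{:}3$ KPZ scaling, and match the limit to the tail probability formula for the KPZ fixed point given in Proposition \ref{prop:KPZ_tail_prob}. Since $(0,0) \prec (\lambda^{2/3}\rx_\ell, \lambda \rt_\ell)$ for every $\ell \geq 2$, Definition \ref{def:cH_by_tail}(ii) (with $k=1$) gives
\begin{equation*}
\prob\!\left(\bigcap_{\ell=2}^{m}\left\{\cH(\lambda^{2/3}\rx_\ell,\lambda \rt_\ell) \ge \lambda^{1/3}\rh_\ell\right\}\right)
= \FT\!\left(0,\lambda^{1/3}\rh_2,\ldots,\lambda^{1/3}\rh_m;(0,0),(\lambda^{2/3}\rx_2,\lambda\rt_2),\ldots,(\lambda^{2/3}\rx_m,\lambda\rt_m)\right).
\end{equation*}
So it suffices to analyze this quantity as $\lambda\to\infty$ and identify the limit.

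First I will apply the $\lambda$-dependent change of variables $u_{i_\ell}^{(\ell)} = \lambda^{-1/3} \tilde u_{i_\ell}^{(\ell)}$ and $v_{i_\ell}^{(\ell)} = \lambda^{-1/3}\tilde v_{i_\ell}^{(\ell)}$ for all $\ell \ge 2$ inside the formula \eqref{eq:def_FT_2} for $\FT$, deforming the $\Gamma$-contours accordingly so that the rescaled contours $\lambda^{1/3}\Gamma_{\ell,\diamond}^{\star}$ lie in the same half planes with the required angles at infinity (the poles at $\pm 1$ get pushed to $\pm \lambda^{1/3}$, so the contour arrangement simplifies and may be freely deformed to the $C$-contours of Proposition \ref{prop:KPZ_tail_prob}). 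Under this rescaling, the exponent in $f_\ell$ becomes $\lambda$-independent and matches $F_{\ell-1}$ of \eqref{eq:def_F} for the KPZ formula, and the prefactor $f_1(-1)/f_1(1)$ equals $1$ since $\alpha_1=\beta_1=\tau_1=0$. Using the Cauchy-determinant identity \eqref{eq:id_rC} to split
\[
\rC(-1 \bunion V^{(2)}; 1\bunion U^{(2)}) = \rC(-1;1)\, \rC(V^{(2)};U^{(2)}) \prod_{i=1}^{n_2}\frac{(-1 - v_i^{(2)})(1 - u_i^{(2)})}{(-1 - u_i^{(2)})(1 - v_i^{(2)})},
\]
one checks that the ratio product tends to $1$, while $\rC(-1;1) = -\tfrac12$ combines with the explicit prefactor $2$ in \eqref{eq:def_cdnz} so that the resulting expression reproduces the Cauchy-determinant structure $\rC(\bseta^{(1)};\bxi^{(1)}) \prod_{\ell}\rC(\bxi^{(\ell)}\bunion\bseta^{(\ell+1)};\bseta^{(\ell)}\bunion\bxi^{(\ell+1)})$ appearing in \eqref{eq:def_dD_bn}, up to powers of $\lambda$ which are absorbed by the Jacobians $\lambda^{-2 n_\ell/3}$ of the change of variables. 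This gives pointwise convergence of each summand indexed by $\bn=(1,n_2,\ldots,n_m)$ to the corresponding summand (indexed by $(n_2,\ldots,n_m) \in \mathbb{Z}_{\ge 1}^{m-1}$) of the KPZ tail formula at the $m-1$ points $(\rx_\ell,\rt_\ell)_{\ell=2}^m$.

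To upgrade to convergence of the whole series, I will establish uniform bounds playing the same role as Lemma \ref{lm:uniform_bound} in Section \ref{sec:asympt}. Concretely, using Lemma \ref{lm:bounds_Cauchy_det} together with the super-exponential decay/growth of $f_\ell$ and $1/f_\ell$ along $\lambda^{1/3}\Gamma_{\ell,\LL}^{\star}$ and $\lambda^{1/3}\Gamma_{\ell,\RR}^{\star}$ (which is uniform in $\lambda$ because the angles at infinity are chosen so that $\Re(-(\rt_\ell-\rt_{\ell-1})\tilde u^3/3)$ dominates), one bounds the rescaled integrand by $C^{\sum n_\ell}\prod_\ell n_\ell^{n_\ell} \cdot \prod_\ell (1+|z_\ell|)^{2 n_\ell} |z_\ell|^{-n_\ell}|1-z_\ell|^{n_{\ell-1}-n_\ell}$, and this bound is integrable and summable, so the dominated convergence theorem applies. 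The limit is precisely the right hand side of \eqref{eq:KPZ_tail_prob} for the narrow-wedge KPZ fixed point at the points $(\rx_2,\rt_2),\ldots,(\rx_m,\rt_m)$ with heights $\rh_2,\ldots,\rh_m$, yielding \eqref{eq:positive_regime_limit}.

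The main obstacle I anticipate is the uniform bound step: one must verify that the deformation of the $\Gamma$-contours to contours whose images $\lambda^{1/3}\Gamma$ accommodate the steepest-descent angles (in particular, that the super-exponential decay along $\Gamma_{\ell,\RR}^\star$ survives the rescaling despite the rescaled contour pushing toward infinity) preserves enough separation between the left and right contours so that the Cauchy-determinant bounds remain of the same form as in Section \ref{sec:asympt}. A careful choice of contours, analogous to the $\hat\Gamma$ construction before \eqref{eq:prelimit_dD}, should resolve this. Once this is in place, matching the limiting integrand to $\dD(\bh;\bz)$ from Proposition \ref{prop:KPZ_tail_prob} and invoking Proposition \ref{prop:converegence_fields_extension} (whose continuity hypothesis for $\rH$ is standard) upgrades the finite-dimensional convergence to the full statement.
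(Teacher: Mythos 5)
Your overall skeleton (reduce to $\FT$ with $\alpha_\ell=\lambda^{2/3}\rx_\ell$, $\tau_\ell=\lambda\rt_\ell$, $\beta_\ell=\lambda^{1/3}\rh_\ell$, rescale the integration variables by $\lambda^{-1/3}$, obtain pointwise convergence of each summand to the integrand of Proposition \ref{prop:KPZ_tail_prob}, and conclude by a dominated-convergence bound in the spirit of Lemma \ref{lm:uniform_bound}) is the same as the paper's. But there is a genuine gap in where you start: you work directly with \eqref{eq:def_FT_2}, whereas the paper starts from Proposition \ref{prop:FT_alt}, in which the $z_1$ integral has already been evaluated and the level-$2$ variables are forced onto the outer contours $\Gamma_{2,\LL}^{\out},\Gamma_{2,\RR}^{\out}$. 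This is not a cosmetic difference. In \eqref{eq:def_FT_2} the $u^{(2)}$ and $v^{(2)}$ integrals are $z_1$-dependent combinations of inner and outer contours, and the inner contours $\Gamma_{2,\LL}^{\inn}$, $\Gamma_{2,\RR}^{\inn}$ lie on the far side of the poles at $-1$ and $1$ coming from $\rC(-1\bunion V^{(2)};1\bunion U^{(2)})$. Hence your claim that after rescaling "the contour arrangement simplifies and may be freely deformed to the $C$-contours" fails at level $2$: pushing $\Gamma_{2,\LL}^{\inn}$ to the saddle-point scale $\lambda^{-1/3}$ crosses the pole at $-1$ and produces residue terms carrying $f_2(-1)=e^{\frac{\lambda\rt_2}{3}+\lambda^{2/3}\rx_2-\lambda^{1/3}\rh_2}$ (and, on the right, $1/f_2(1)$ of the same exponential order), which diverge like $e^{\frac{\lambda\rt_2}{3}(1+o(1))}$. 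Consequently, for fixed $z_1$ the summand $\cD_{\bn}(\bbeta;\bz)$ does not converge and no $\lambda$-uniform bound of the form you assert can hold; the divergent pieces cancel only after the exact $z_1$ integration (deforming the $z_1$ contour to infinity kills every term containing an inner level-$2$ contour). Relatedly, your limit must match the KPZ formula at $m-1$ points, which has only $m-2$ $z$-integrals, so the $z_1$ integral has to disappear somewhere, and your proposal never says how.

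The fix is exactly the paper's route: invoke Proposition \ref{prop:FT_alt} first, so that only $\Gamma_{2,\LL}^{\out}$ and $\Gamma_{2,\RR}^{\out}$ appear at level $2$; these sit between $\pm1$ and the origin, so shrinking \emph{all} contours to $\lambda^{-1/3}$ scale crosses no poles (the only possible poles at $\pm1$ involve only the level-$2$ variables). After that, your bookkeeping of the exponent (exact $1{:}2{:}3$ invariance of $f_\ell$), the factor $\lambda^{-n_2/3}\rC(-1\bunion V^{(2)};1\bunion U^{(2)})\to-\tfrac12\rC(\bseta^{(2)};\bxi^{(2)})$, and the uniform bound as in Lemma \ref{lm:uniform_bound} go through as you describe, and the limit is identified with \eqref{eq:KPZ_tail_prob}. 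One further minor point: the appeal to Proposition \ref{prop:converegence_fields_extension} is unnecessary here — the statement to prove is itself a finite-dimensional tail-probability limit, and the $\pm\pi/5$ contour angles make the formula (and the analysis) valid for all points ordered by $\prec$, including equal times, so no separate extension argument is needed.
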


Below we prove this proposition.

Denote 
\begin{equation}
    \label{eq:aux_12_29_08}
    \rx_1=\rt_1=\rh_1=0.
\end{equation}
Recall Definition \ref{def:cH_by_tail} and Proposition \ref{prop:FT_alt}. We write the left hand side of \eqref{eq:positive_regime_limit} as 
\begin{equation}
    \label{eq:aux_12_29_07}
    \begin{split}
    &\FT(\beta_1,\cdots,\beta_m;(\alpha_1,\tau_1),\cdots,(\alpha_m,\tau_m))\\
    &= (-1)^m \oint_{>1}\cdots \oint_{>1} \sum_{\substack{n_\ell \ge 1\\ 2\le \ell\le m}} \frac{1}{ (n_2!\cdots n_{m-1}!)^2} \tilde\cD_{\bn}(\bbeta;\tilde\bz) \prod_{\ell=2}^{m-1}\frac{\rd z_\ell}{2\pi\ii z_\ell(1-z_\ell)},
    \end{split}
\end{equation}
where  
\begin{equation}
    \alpha_\ell =\lambda^{2/3} \rx_\ell, \quad \tau_\ell =\lambda \rt_\ell,\quad \beta_\ell = \lambda^{1/3} \rh_\ell
\end{equation}
for $1\le \ell \le m$, and the function $\tilde\cD_{\bn}(\bbeta;\tilde\bz)$ and the notations are the same as in Proposition \ref{prop:FT_alt}. Note $n_1=1$ is fixed and $\tilde\bz=(z_2,\cdots,z_{m-1})$. We copy the formula below for convenience of the readers. Note that we dropped the factor $f_1(-1)/f_1(1)=1$ by \eqref{eq:aux_12_29_08}. 
\begin{equation}
    \label{eq:def_cdnz_alt2}
            \begin{split}
                 &\tilde\cD_{\bn}(\bbeta;\tilde\bz)\\
                 &=2\prod_{\ell=2}^{m-1}(1-z_{\ell})^{n_{\ell}}(1-z_{\ell}^{-1})^{n_{\ell+1}}
                 \cdot \prod_{\ell=3}^{m} \prod_{i_\ell=1}^{n_\ell} \left(\frac{1}{1-z_{\ell-1}}\int_{\Gamma_{\ell,\LL}^\inn}\ddbar{u_{i_\ell}^{(\ell)}}{}-\frac{z_{\ell-1}}{1-z_{\ell-1}}\int_{\Gamma_{\ell,\LL}^\out}\ddbar{u_{i_\ell}^{(\ell)}}{}\right)\prod_{i_2=1}^{n_2}\int_{\Gamma_{2,\LL}^{\out}}\ddbar{u_{i_2}^{(2)}}{}\\
                 &\quad  \prod_{\ell=3}^{m} \prod_{i_\ell=1}^{n_\ell} \left(\frac{1}{1-z_{\ell-1}}\int_{\Gamma_{\ell,\RR}^\inn}\ddbar{v_{i_\ell}^{(\ell)}}{}-\frac{z_{\ell-1}}{1-z_{\ell-1}}\int_{\Gamma_{\ell,\RR}^\out}\ddbar{v_{i_\ell}^{(\ell)}}{}\right)\prod_{i_2=1}^{n_2}\int_{\Gamma_{2,\RR}^{\out}}\ddbar{v_{i_2}^{(2)}}{}\cdot \prod_{\ell=2}^{m} \prod_{i_\ell=1}^{n_\ell} \frac{f_{\ell}(u_{i_\ell}^{(\ell)})}{f_{\ell}(v_{i_\ell}^{(\ell)})} \\
            & \quad \cdot \rC(-1\bunion V^{(2)}; 1\bunion U^{(2)}) 
            \cdot \prod_{\ell=2}^{m-1} \rC(U^{(\ell)}\bunion V^{(\ell+1)}; V^{(\ell)}\bunion U^{(\ell+1)})
            \cdot \rC(U^{(m)};V^{(m)}).
            \end{split}
\end{equation}

We will apply the steepest descent method to the formula \eqref{eq:def_cdnz_alt2} of $\tilde\cD_{\bn}(\bbeta;\tilde\bz)$ when $\lambda\to\infty$. This will be done by shrinking the contours 
\begin{equation}
    \label{eq:shrink_contours}
\Gamma_{\ell,\diamond}^{\star} \to \lambda^{-1/3} \Gamma_{\ell,\diamond}^{\star},\quad 3\le \ell \le m,\quad \diamond \in \{\LL,\RR\}, \quad \star \in \{\inn,\out\}
\end{equation}
and 
\begin{equation}
    \label{eq:shrink_contours2}
\Gamma_{2,\diamond}^{\out} \to \lambda^{-1/3} \Gamma_{2,\diamond}^{\out}, \quad \diamond \in \{\LL,\RR\}.
\end{equation}
Recall that we initially require the contours $\Gamma_{\ell,\diamond}^\inn$ to be nested and outside of the points $\pm 1$. However, since the only factor that might generate poles is  $\rC(-1\bunion V^{(2)}; 1\bunion U^{(2)})$ while $u_{i_2}^{(2)}\in \Gamma_{2,\LL}^\out$ and $v_{i_2}^{(2)}\in\Gamma_{2,\RR}^\out$ which are already closer to the origin (see Figure \ref{fig:contours1} for an illustration), the deformation of the contours will not encounter any poles. 

We change variables accordingly
  \begin{equation}
    \label{eq:change_variables_cD}
    u_{i_\ell}^{(\ell)} = \lambda^{-1/3} \xi_{i_\ell}^{(\ell)}, \quad v_{i_\ell}^{(\ell)} = \lambda^{-1/3} \eta_{i_\ell}^{(\ell)}
\end{equation}
where $\xi_{i_\ell}^{(\ell)} \in \Gamma_{\ell,\LL}^\star$ and $\eta_{i_\ell}^{(\ell)} \in \Gamma_{\ell,\RR}^\star$ for all $1\le i_\ell \le n_\ell$ and $2\le \ell\le m$. Note the simple identity
\begin{equation}
    f_\ell(w)=e^{-\frac{1}{3}(\tau_\ell -\tau_{\ell-1})w^3 +(\alpha_\ell - \alpha_{\ell-1})w^2 +(\beta_\ell -\beta_{\ell-1})w} = e^{-\frac{1}{3}(\rt_\ell-\rt_{\ell-1})\zeta^3 + (\rx_\ell-\rx_{\ell-1})\zeta^2 +(\rh_\ell -\rh_{\ell-1})\zeta} :=\tilde F_\ell(\zeta),
\end{equation}
when  $2\le\ell\le m$ and $w=\lambda^{-1/3}\zeta$.  Thus we obtain 
\begin{equation}
            \begin{split}
                 &\tilde\cD_{\bn} (\bbeta;\tilde\bz)\\
                 &=2\prod_{\ell=2}^{m-1}(1-z_{\ell})^{n_{\ell}}(1-z_{\ell}^{-1})^{n_{\ell+1}}  \prod_{\ell=3}^{m} \prod_{i_\ell=1}^{n_\ell} \left(\frac{1}{1-z_{\ell-1}}\int_{ \Gamma_{\ell,\LL}^\inn}\ddbar{\xi_{i_\ell}^{(\ell)}}{}-\frac{z_{\ell-1}}{1-z_{\ell-1}}\int_{ \Gamma_{\ell,\LL}^\out}\ddbar{\xi_{i_\ell}^{(\ell)}}{}\right)\cdot \prod_{i_2=1}^{n_2}\int_{ \Gamma_{\ell,\LL}^\out}\ddbar{\xi_{i_2}^{(2)}}{}\\
                 &\quad  \prod_{\ell=3}^{m} \prod_{i_\ell=1}^{n_\ell}\left(\frac{1}{1-z_{\ell-1}}\int_{ \Gamma_{\ell,\RR}^\inn}\ddbar{\eta_{i_\ell}^{(\ell)}}{}-\frac{z_{\ell-1}}{1-z_{\ell-1}}\int_{ \Gamma_{\ell,\RR}^\out}\ddbar{\eta_{i_\ell}^{(\ell)}}{}\right) \cdot \prod_{i_2=1}^{n_2}\int_{ \Gamma_{\ell,\RR}^\out}\ddbar{\eta_{i_2}^{(2)}}{}\\
                 &\quad \prod_{\ell=2}^{m} \prod_{i_\ell=1}^{n_\ell} \frac{\tilde F_{\ell}(\xi_{i_\ell}^{(\ell)})}{\tilde F_{\ell}(\eta_{i_\ell}^{(\ell)})}\cdot \lambda^{-n_2/3}\rC(-1\bunion V^{(2)}; 1\bunion U^{(2)})
                 \cdot \prod_{\ell=2}^{m-1} \rC(\bxi^{(\ell)}\bunion \bseta^{(\ell+1)}; \bseta^{(\ell)}\bunion \bxi^{(\ell+1)})
                 \cdot \rC(\bxi^{(m)};\bseta^{(m)}).
            \end{split}
        \end{equation}

        We also note that
\begin{equation}
    \lambda^{-n_2/3}\rC(-1\bunion V^{(2)}; 1\bunion U^{(2)}) \to -\frac12\cdot \rC(\bseta^{(2)};\bxi^{(2)})
\end{equation}
as $\lambda\to\infty$. 
    Thus, assuming the integrand is uniformly bounded, we obtain the large $\lambda$ limit
    \begin{equation}
        \begin{split}
             &\lim_{\lambda\to\infty}\tilde\cD_{\bn} (\bbeta;\tilde\bz)\\
             &=-\prod_{\ell=2}^{m-1}(1-z_{\ell})^{n_{\ell}}(1-z_{\ell}^{-1})^{n_{\ell+1}}  \prod_{\ell=3}^{m} \prod_{i_\ell=1}^{n_\ell} \left(\frac{1}{1-z_{\ell-1}}\int_{ \Gamma_{\ell,\LL}^\inn}\ddbar{\xi_{i_\ell}^{(\ell)}}{}-\frac{z_{\ell-1}}{1-z_{\ell-1}}\int_{ \Gamma_{\ell,\LL}^\out}\ddbar{\xi_{i_\ell}^{(\ell)}}{}\right)\cdot \prod_{i_2=1}^{n_2}\int_{ \Gamma_{\ell,\LL}^\out}\ddbar{\xi_{i_2}^{(2)}}{}\\
             &\quad  \prod_{\ell=3}^{m} \prod_{i_\ell=1}^{n_\ell}\left(\frac{1}{1-z_{\ell-1}}\int_{ \Gamma_{\ell,\RR}^\inn}\ddbar{\eta_{i_\ell}^{(\ell)}}{}-\frac{z_{\ell-1}}{1-z_{\ell-1}}\int_{ \Gamma_{\ell,\RR}^\out}\ddbar{\eta_{i_\ell}^{(\ell)}}{}\right) \cdot \prod_{i_2=1}^{n_2}\int_{ \Gamma_{\ell,\RR}^\out}\ddbar{\eta_{i_2}^{(2)}}{}\\
             &\quad \prod_{\ell=2}^{m} \prod_{i_\ell=1}^{n_\ell} \frac{\tilde F_{\ell}(\xi_{i_\ell}^{(\ell)})}{\tilde F_{\ell}(\eta_{i_\ell}^{(\ell)})}\cdot \rC(\bseta^{(2)};\bxi^{(2)})
             \cdot \prod_{\ell=2}^{m-1} \rC(\bxi^{(\ell)}\bunion \bseta^{(\ell+1)}; \bseta^{(\ell)}\bunion \bxi^{(\ell+1)})
             \cdot \rC(\bxi^{(m)};\bseta^{(m)}).
        \end{split}
    \end{equation}

Inserting it to \eqref{eq:aux_12_29_07} and comparing it with Proposition \ref{prop:KPZ_tail_prob}, we find
\begin{equation}
    \lim_{\lambda\to\infty}\FT(\beta_1,\cdots,\beta_m;(\alpha_1,\tau_1),\cdots,(\alpha_m,\tau_m))=\prob\left(\bigcap_{\ell=2}^{m} \left\{ \rH(\rx_\ell,\rt_\ell)\ge \rh_\ell\right\}\right).
\end{equation}
 We thus obtain Proposition \ref{prop:positive_regime_limit}.
      
It remains to justify that we can take the large $\lambda$ limit within the integral and summation. In fact, we can obtain the following uniform bound for $\tilde\cD_{\bn} (\bbeta;\tilde\bz)$ due to the super-exponentially growing/decaying property of the functions $\tilde F_\ell$,
\begin{equation}
    \left|\tilde\cD_{\bn} (\bbeta;\tilde\bz)\right| \le  C^{n_2+\cdots+n_m}\prod_{\ell=2}^{m-1} \frac{(1+|z_\ell|)^{n_{\ell+1}}}{|1-z_\ell|^{n_{\ell+1}-n_\ell}|z_\ell|^{n_{\ell+1}}}\cdot \prod_{\ell=2}^m n_\ell^{n_\ell}.
\end{equation}
The proof of the above bound is almost identical to the proof of Lemma \ref{lm:uniform_bound} hence we omit the details. Using this bound, we see that the dominated convergence theorem applies and we can take the limit within the integral in \eqref{eq:def_cdnz_alt2}.

\section{Proof of Proposition \ref{prop:properties_cH}}
\label{sec:proof_property}

The first part of Proposition \ref{prop:properties_cH} follows from the definition of $\cH$ in Definition \ref{def:cH_by_tail}, especially the equation \eqref{eq:cH_00}.

For part (b), it is sufficient to show the two cases when $(0,0)\prec (\alpha,\tau)$ and $(\alpha,\tau)=(0,0)$ by the symmetry of the formula. When $(0,0)\prec (\alpha,\tau)$, the property follows from  Definition \ref{def:cH_by_tail} and the identity \eqref{eq:aux_12_31_02}. When $(\alpha,\tau)=(0,0)$, it follows from part (a) and a direct verification.

Part (c) is a direct corollary of Lemma \ref{lm:joint_density}.

The proof of (d) requires some calculations. We first write down the formula of the joint tail probability function of $\cH_0(\alpha_1,0),\cdots, \cH(\alpha_{k-1},0)$ and $\cH(\alpha_{k+1},0),\cdots, \cH_0(\alpha_m,0)$, where $\alpha_1<\cdots<\alpha_{k-1}<0$ and $0<\alpha_{k+1}<\cdots<\alpha_m$. Using Proposition \ref{prop:FT_alt} and Proposition \ref{prop:finite_dimensional_cH_0}, we have
\begin{equation}
\begin{split}
\prob \left(\bigcap_{\substack{1\le \ell\le m\\ \ell\ne k}}  \left\{\cH_0(\alpha_\ell,0)\ge \beta_\ell\right\}\right)= (-1)^m \oint_{>1}\cdots \oint_{>1} \sum_{\substack{n_\ell \ge 1\\ 2\le \ell\le m}} \frac{1}{ (n_2!\cdots n_{m-1}!)^2} \tilde\cD_{\bn}'(\bbeta;\tilde\bz) \prod_{\ell=2}^{m-1}\frac{\rd z_\ell}{2\pi\ii z_\ell(1-z_\ell)},
    \end{split}
\end{equation}
where $\tilde\bz=(z_2,\cdots,z_{m-1})$, and
\begin{equation}
    \label{eq:aux_1_13_02}
            \begin{split}
                 &\tilde\cD_{\bn}'(\bbeta;\tilde\bz)\\
                 &=-e^{-2\beta_1}\prod_{\ell=2}^{m-1}(1-z_{\ell})^{n_{\ell}}(1-z_{\ell}^{-1})^{n_{\ell+1}}
                 \cdot \prod_{\ell=3}^{m} \prod_{i_\ell=1}^{n_\ell} \left(\frac{1}{1-z_{\ell-1}}\int_{\Gamma_{\ell,\LL}^\inn}\ddbar{u_{i_\ell}^{(\ell)}}{}-\frac{z_{\ell-1}}{1-z_{\ell-1}}\int_{\Gamma_{\ell,\LL}^\out}\ddbar{u_{i_\ell}^{(\ell)}}{}\right)\prod_{i_2=1}^{n_2}\int_{\Gamma_{2,\LL}^{\out}}\ddbar{u_{i_2}^{(2)}}{}\\
                 &\quad  \prod_{\ell=3}^{m} \prod_{i_\ell=1}^{n_\ell} \left(\frac{1}{1-z_{\ell-1}}\int_{\Gamma_{\ell,\RR}^\inn}\ddbar{v_{i_\ell}^{(\ell)}}{}-\frac{z_{\ell-1}}{1-z_{\ell-1}}\int_{\Gamma_{\ell,\RR}^\out}\ddbar{v_{i_\ell}^{(\ell)}}{}\right)\prod_{i_2=1}^{n_2}\int_{\Gamma_{2,\RR}^{\out}}\ddbar{v_{i_2}^{(2)}}{}\cdot \prod_{\ell=2}^{m} \prod_{i_\ell=1}^{n_\ell} \frac{f_{\ell}(u_{i_\ell}^{(\ell)})}{f_{\ell}(v_{i_\ell}^{(\ell)})} \\ 
            & \quad \cdot \rC(-1\bunion V^{(2)}; 1\bunion U^{(2)}) 
            \cdot \prod_{\ell=2}^{m-1} \rC(U^{(\ell)}\bunion V^{(\ell+1)}; V^{(\ell)}\bunion U^{(\ell+1)})
            \cdot \rC(U^{(m)};V^{(m)}) \\
            &\quad \cdot \left(\sum_{i_k=1}^{n_k}(u_{i_k}^{(k)}-v_{i_k}^{(k)})-\sum_{i_{k+1}=1}^{n_{k+1}}(u_{i_{k+1}}^{(k+1)}-v_{i_{k+1}}^{(k+1)})\right).
            \end{split}
\end{equation}
Here we set $\alpha_k=\beta_k=0$ for notation convention in the above formula. The contours are the same as in Definition \ref{defn:FT}, also see Figure \ref{fig:contours1} for an illustration. The functions
\begin{equation}
f_\ell(w) = (\alpha_\ell-\alpha_{\ell-1})w^2 + (\beta_\ell-\beta_{\ell-1})w,\quad 2\le \ell \le m
\end{equation}
which are defined in \eqref{eq:def_f} with the parameters $\tau_\ell=0$.

We need to simplify the formula first. 
\begin{lm}
\label{lm:aux_1_13_1}
The contribution of the integral in \eqref{eq:aux_1_13_02} is zero if any of the $v_{i_\ell}^{(\ell)}$ contours is chosen as $\Gamma_{\ell,\RR}^\inn$. Moreover, $\tilde\cD_{\bn}'(\bbeta;\tilde\bz)=0$ unless $n_1=\cdots=n_m=1$.
\end{lm}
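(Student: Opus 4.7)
The lemma has two claims and the plan is to tackle them in order.

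For the first claim, I would proceed by a top-down induction on the largest index $\ell^{*}\in\{3,\ldots,m\}$ such that some $v_{i}^{(\ell^{*})}$ is integrated on $\Gamma_{\ell^{*},\RR}^\inn$. By maximality, every $v_{j}^{(\ell)}$ with $\ell>\ell^{*}$ already lies on $\Gamma_{\ell,\RR}^\out$, which, in view of the contour ordering at the start of Section \ref{sec:def_FT}, sits to the left of $\Gamma_{\ell^{*},\RR}^\inn$. One then deforms the $v_{i}^{(\ell^{*})}$-contour freely rightward to $+\infty$. Two items need to be checked: (i) no pole of the integrand is crossed --- the poles in $v_{i}^{(\ell^{*})}$ coming from the Cauchy-determinant denominators are at $v_{j}^{(\ell^{*}\pm 1)}$ and $u_{j}^{(\ell^{*})}$, all of which (together with the $u$-variables and the point $1$) lie to the left of $\Gamma_{\ell^{*},\RR}^\inn$; (ii) the integrand decays super-exponentially along the deformed contour. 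The decay is delivered by $1/f_{\ell^{*}}(v)=\exp(-(\alpha_{\ell^{*}}-\alpha_{\ell^{*}-1})v^{2}-(\beta_{\ell^{*}}-\beta_{\ell^{*}-1})v)$: since $\tau_{\ell^{*}}-\tau_{\ell^{*}-1}=0$ there is no cubic term, $\Re(v^{2})\sim |v|^{2}\cos(2\pi/5)>0$ along the $\pm\pi/5$ tails, and $\alpha_{\ell^{*}}>\alpha_{\ell^{*}-1}$ by the order $\prec$.

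For the second claim, the plan is to first invoke the first claim together with its $u$-analogue. The $u$-analogue is proved by a symmetric top-down induction on the left half-plane: $f_{\ell}(u)\sim e^{(\alpha_{\ell}-\alpha_{\ell-1})u^{2}}$ decays along $\pm 2\pi/3$ because $\Re(u^{2})<0$ there, and $\Gamma_{\ell,\LL}^\inn$ can be pushed off to $-\infty$ without crossing poles. After these two reductions every $u$-integral sits on $\Gamma_{\ell,\LL}^\out$ and every $v$-integral on $\Gamma_{\ell,\RR}^\out$. For the $\ell=2$ layer these outer contours are closed loops around $-1$ and $1$, so $u_{1}^{(2)}$-integration reduces to the simple-pole residue at $-1$ carried by $\rC(-1\bunion V^{(2)};1\bunion U^{(2)})$. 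A direct computation using the product formula \eqref{eq:id_rC} shows that this residue converts $\prod_{j\ge 2}(1+u_{j}^{(2)})$ from a denominator factor into a numerator factor, so the remaining integrand in $u_{2}^{(2)},\ldots,u_{n_{2}}^{(2)}$ becomes analytic inside $\Gamma_{2,\LL}^\out$. Cauchy's theorem then forces these integrals to vanish unless $n_{2}=1$, and the symmetric argument at $v=1$ gives the same conclusion from $\Gamma_{2,\RR}^\out$.

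The main obstacle is the higher layers $\ell\ge 3$, where $\Gamma_{\ell,\LL}^\out$ and $\Gamma_{\ell,\RR}^\out$ are infinite arcs rather than closed loops around any isolated pole, so the previous residue argument does not apply verbatim. My plan is an outer induction on $\ell$ starting from $\ell=3$. After having forced $n_{\ell'}=1$ for all $\ell'<\ell$, I would deform $\Gamma_{\ell,\LL}^\out$ toward the imaginary axis, crossing only the (now single-variable) contours $\Gamma_{\ell',\LL}^\out$ with $\ell'<\ell$ and collecting residues at $u_{j}^{(\ell)}=u^{(\ell-1)}_{*}$ carried by $\rC(U^{(\ell-1)}\bunion V^{(\ell)};V^{(\ell-1)}\bunion U^{(\ell)})$. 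I expect the antisymmetry encoded in the $(u_{j}^{(\ell)}-u_{j'}^{(\ell)})$ factors of the Cauchy numerators, combined with the already-established $n_{\ell-1}=1$, to cause the multi-variable residues to telescope away, leaving only the $n_{\ell}=1$ contribution. Arranging this residue bookkeeping cleanly is the most delicate step and is where I anticipate spending most of the effort.
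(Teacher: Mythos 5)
Your treatment of the first claim is essentially the paper's argument: pick the largest index $\ell^{*}$ with an inner $v$-contour, note that all poles of the integrand in $v_{i}^{(\ell^{*})}$ (at $u_{j}^{(\ell^{*})}$, $v_{j}^{(\ell^{*}-1)}$, $v_{j}^{(\ell^{*}+1)}$) lie to the left of $\Gamma_{\ell^{*},\RR}^{\inn}$, and translate the contour to $+\infty$ using the super-exponential decay of $1/f_{\ell^{*}}(v)=e^{-(\alpha_{\ell^{*}}-\alpha_{\ell^{*}-1})v^{2}-(\beta_{\ell^{*}}-\beta_{\ell^{*}-1})v}$ in the right half-plane. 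That part is fine.

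The second claim is where there is a genuine gap, and in fact two of your intermediate steps fail. First, the proposed ``$u$-analogue'' obtained by pushing $\Gamma_{\ell,\LL}^{\inn}$ to $-\infty$ is not valid: on a contour translated to $-R+\hat\Gamma_{\LL}$ the vertex region has $u\approx -R$, so $\Re\bigl((\alpha_{\ell}-\alpha_{\ell-1})u^{2}\bigr)\approx(\alpha_{\ell}-\alpha_{\ell-1})R^{2}>0$ and $f_{\ell}(u)$ \emph{grows} like $e^{cR^{2}}$; the decay you cite along the $\pm 2\pi/3$ arms of the fixed contour does not survive translation, so there is no analogue of \eqref{eq:aux_2_13_03} on the left. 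Indeed, for fixed $\tilde\bz$ the inner-$u$ contributions need not vanish at all; in the paper the inner $u$-contours are eliminated only \emph{after} Lemma \ref{lm:aux_1_13_1}, by deforming the $z_{\ell}$ contours to infinity, and this is not part of the lemma. Second, your layer-$2$ step assumes $\Gamma_{2,\LL}^{\out}$ and $\Gamma_{2,\RR}^{\out}$ are ``closed loops around $-1$ and $1$''; they are unbounded contours running from $\infty e^{-\ii 2\pi/3}$ to $\infty e^{\ii 2\pi/3}$ (resp.\ $\infty e^{-\ii\pi/5}$ to $\infty e^{\ii\pi/5}$), and they cannot be closed up because $f_{2}(u)$ (resp.\ $1/f_{2}(v)$) grows in the direction needed to close them, so the residue-at-$-1$/Cauchy-theorem reduction does not get started. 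Finally, the higher-layer ``residue bookkeeping'' you defer is precisely the content of the proof, and the correct mechanism never touches the $u$-contours: one deforms only the $v$-contours rightward. For $n_{2}\ge 2$, each $v_{i_2}^{(2)}$ picks up only the residue at $1$ (the $v^{(3)}$ poles sit on $\Gamma_{3,\RR}^{\out}$, to the \emph{left}), and two coincident residues are killed by the antisymmetric numerator factor $(v_{1}^{(2)}-v_{2}^{(2)})$ in $\rC(U^{(2)}\bunion V^{(3)};V^{(2)}\bunion U^{(3)})$. Inductively, with $n_{2}=\cdots=n_{\ell-1}=1$ the surviving residues force $v_{1}^{(2)}=\cdots=v_{1}^{(\ell-1)}=1$, and if $n_{\ell}\ge 2$ the repeated residue at $v_{i_\ell}^{(\ell)}=1$ again vanishes by the factor $(v_{i}^{(\ell)}-v_{j}^{(\ell)})$ in $\rC(U^{(\ell)}\bunion V^{(\ell+1)};V^{(\ell)}\bunion U^{(\ell+1)})$. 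Your proposal as written does not reach this conclusion.
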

\begin{proof}[Proof of Lemma \ref{lm:aux_1_13_1}]
The key observation is that for any $\star\in\{\inn,\out\}$
\begin{equation}
\label{eq:aux_2_13_03}
\int_{R+\Gamma_{\ell,\RR}^\star} g(v_{i_\ell}^{(\ell)})e^{-f_\ell (v_{i_\ell}^{(\ell)})} \ddbar{v_{i_\ell}^{(\ell)}}{} \to 0
\end{equation}
as $R\to+\infty$ as long as the $g$ function is analytic to the right of $R+\Gamma_{\ell,\RR}^\star$ and it grows slower than $e^{(1-\epsilon)f_\ell(v)}$ for some $\epsilon>0$ when $\mathrm{Re}(v)\to\infty$. Thus, if we choose some $\Gamma_{\ell,\RR}^\inn$ when we expand the integral in \eqref{eq:aux_1_13_02}, and assume that $\ell$ is the largest index such that  $\Gamma_{\ell,\RR}^\inn$ is chosen, then we can shift the integral $\int_{\Gamma_{\ell,\RR}^\inn} $ to right by $+\infty$ and the integral vanishes. This proves the first half of the lemma. 

For the second half, we prove it inductively. If $n_2>1$, we deform all the $v_{i_2}^{(2)}$ contours as in \eqref{eq:aux_2_13_03} hence only the residues during the deformation survives. Moreover, the only residue comes from the factor $v_{i_2}^{(2)}=1$ for each $i_2$. However, note the Cauchy determinant $\rC(U^{(2)}\bunion V^{(3)}; V^{(2)}\bunion U^{(3)})$ which contains a factor $v_{1}^{(2)}-v_2^{(2)}=0$. Thus, the residue vanishes in this case and we proved the case when $n_2>1$. Now we assume $n_2=\cdots=n_{\ell-1}=1$ but $n_\ell>1$. Our earlier argument implies that we could deform all the $v_1^{(2)},\cdots, v_{1}^{(\ell-1)}$ contours to infinity and only the residue $v_{1}^{(2)}=\cdots=v_1^{(\ell-1)}=1$ survives. We similarly deform the $v_{i_\ell}^{(\ell)}$ contours to infinity as in \eqref{eq:aux_2_13_03} and only the residue at $v_{i_\ell}^{(\ell)}= v_{1}^{(\ell-1)}=1$ survives. The residue is zero due to the Cauchy factor $\rC(U^{(\ell)}\bunion V^{(\ell+1)}; V^{(\ell)}\bunion U^{(\ell+1)})$. This finishes the induction.
\end{proof}

Applying Lemma \ref{lm:aux_1_13_1}, we only need to evaluate $\tilde\cD_{\bn}'(\bbeta;\tilde\bz)$ when $n_1=\cdots=n_m=1$ and all the $v$ contours are chosen as $\Sigma_{\ell,\RR}^\out$. In this case, by consider the $z_\ell$ integrals and letting $z_\ell$ contours to infinity, we find that all the $u$ contours can only be chosen as $\Sigma_{\ell,\LL}^\out$. Thus we obtain, after simplifying the notations,
\begin{equation}
\begin{split}
&\prob \left(\bigcap_{\substack{1\le \ell\le m\\ \ell\ne k}}  \left\{\cH_0(\alpha_\ell,0)\ge \beta_\ell\right\}\right)= (-1)^{m-1}e^{-2\beta_1} \prod_{\ell=2}^m \int_{\Gamma_{\ell,\LL}^\out} \ddbar{u_\ell}{} \int_{\Gamma_{\ell,\RR}^\out} \ddbar{v_\ell}{} \prod_{\ell=2}^m \frac{f_\ell(u_\ell)}{f_\ell(v_\ell)}\\
&\quad \det\begin{bmatrix}
\frac{1}{-1-1}&\frac{1}{-1-u_2}\\ \frac{1}{v_2-1}&\frac{1}{v_2-u_2} 
\end{bmatrix}\cdot \prod_{\ell=2}^{m-1}
\det\begin{bmatrix}
    \frac{1}{u_\ell-v_\ell} & \frac{1}{u_\ell-u_{\ell+1}} \\ \frac{1}{v_{\ell+1}-v_\ell} & \frac{1}{v_{\ell+1}-u_{\ell+1}}
\end{bmatrix}
\cdot \frac{1}{u_m-v_m} \cdot \left(u_k-v_k-u_{k+1}+v_{k+1}\right).
    \end{split}
\end{equation}
By further deforming the contours $\Sigma_{\ell,\RR}^\out$ to infinity and evaluating the residues at $1$, we get
\begin{equation}
\label{eq:aux:1_13_04}
\begin{split}
&\prob \left(\bigcap_{\substack{1\le \ell\le m\\ \ell\ne k}}  \left\{\cH_0(\alpha_\ell,0)\ge \beta_\ell\right\}\right)\\
&= (-1)^{m-1}e^{\alpha_1-\beta_1-\alpha_m-\beta_m} \prod_{\ell=2}^m \int_{\Gamma_{\ell,\LL}^\out} \ddbar{u_\ell}{} \left[\prod_{\ell=2}^m f_\ell(u_\ell)\right] \cdot \left[ \frac{1}{-1-u_2} \cdot \prod_{\substack{2\le \ell \le m-1\\ \ell\ne k}}\frac{1}{u_\ell-u_{\ell+1}} \cdot  \frac{1}{u_m-1}\right].
    \end{split}
\end{equation}
Now we change variables $u_\ell=-1+\hat u_\ell$ for $2\le \ell \le k$ and $u_\ell=1+\hat u_\ell$ for $\ell=k+1,\cdots,m$. \eqref{eq:aux:1_13_04} is the product of the following two terms
\begin{equation}
 \prod_{\ell=2}^k \int_{1+\Gamma_{\ell,\LL}^\out}  \frac{\prod_{\ell=2}^k e^{(\alpha_\ell-\alpha_{\ell-1})\hat u_\ell^2 +(\beta_\ell-2\alpha_\ell-(\beta_{\ell-1}-2\alpha_{\ell-1}))\hat u_\ell}}{\hat u_2\prod_{\ell=2}^{k-1}(\hat u_{\ell+1}-\hat u_\ell)}=\prob\left(\bigcap_{\ell=1}^{k-1} \left\{ \bB_1(-2\alpha_\ell)\ge \beta_\ell-2\alpha_\ell\right\}\right)
\end{equation}
by \eqref{eq:aux_1_13_05}, and
\begin{equation}
 \prod_{\ell=k+1}^m \int_{-1+\Gamma_{\ell,\LL}^\out}  \frac{\prod_{\ell=k+1}^m e^{(\alpha_\ell-\alpha_{\ell-1})\hat u_\ell^2 +(\beta_\ell+2\alpha_\ell-(\beta_{\ell-1}+2\alpha_{\ell-1}))\hat u_\ell}}{(-\hat u_m)\prod_{\ell=k+1}^{m-1}(\hat u_{\ell+1}-\hat u_\ell)}=\prob\left(\bigcap_{\ell=k+1}^{m} \left\{ \bB_2(2\alpha_\ell)\ge \beta_\ell+2\alpha_\ell\right\}\right)
\end{equation}
by \eqref{eq:aux_1_13_06} and a shift of index, where $\bB_1$ and $\bB_2$ are two independent Brownian motions. Thus we obtain
\begin{equation}
\prob \left(\bigcap_{\substack{1\le \ell\le m\\ \ell\ne k}}  \left\{\cH_0(\alpha_\ell,0)\ge \beta_\ell\right\}\right) = \prob\left(\bigcap_{\substack{1\le \ell\le m\\ \ell\ne k}} \left\{ \Bts(2\alpha_\ell)-|2\alpha_\ell|\ge \beta_\ell \right\}\right)
\end{equation}
and we proved the last part of Proposition \ref{prop:properties_cH}.

\end{document}